\newtheorem{theorem}{Theorem}[section]
\newtheorem{lemma}[theorem]{Lemma}
\newtheorem{question}[theorem]{Question}
\newtheorem{definition}[theorem]{Definition}
\newtheorem{example}[theorem]{Example}
\newtheorem{remark}[theorem]{Remark}
\newtheorem{proposition}[theorem]{Proposition}
\newtheorem{corollary}[theorem]{Corollary}
\begin{document}
\title[Unique Factorization Property II]
{Unique Factorization property of non-Unique Factorization Domains II}
\author[G. W. Chang]{Gyu Whan Chang}
\address{Department of Mathematics Education, Incheon National University, Incheon 22012, Korea}
\email{whan@inu.ac.kr}

\author[A. Reinhart]{Andreas Reinhart}
\address{Institut f\"ur Mathematik und wissenschaftliches Rechnen, Karl-Franzens-Universit\"at Graz, NAWI Graz, Heinrichstra{\ss}e 36, 8010 Graz, Austria}
\email{andreas.reinhart@uni-graz.at}

\subjclass[2010]{13A15, 13F05, 13G05}
\keywords{valuation element, VFD, P$v$MD, HoFD, weakly Matlis GCD-domain}
\date{\today}

\begin{abstract}
Let $D$ be an integral domain. A nonzero nonunit $a$ of $D$ is called a {\em valuation element} if there is a valuation overring $V$ of $D$ such that $aV\cap D=aD$. We say that $D$ is a {\em valuation factorization domain} (VFD) if each nonzero nonunit of $D$ can be written as a finite product of valuation elements. In this paper, we study some ring-theoretic properties of VFDs. Among other things, we show that (i) a VFD $D$ is Schreier, and hence ${\rm Cl}_t(D)=\{0\}$, (ii) if $D$ is a P$v$MD, then $D$ is a VFD if and only if $D$ is a weakly Matlis GCD-domain, if and only if $D[X]$, the polynomial ring over $D$, is a VFD and (iii) a VFD $D$ is a weakly factorial GCD-domain if and only if $D$ is archimedean. We also study a unique factorization property of VFDs.
\end{abstract}

\maketitle

\section*{0. Introduction}

Let $D$ be an integral domain with quotient field $K$. An overring of $D$ means a subring of $K$ containing $D$. A nonzero nonunit $x\in D$ is said to be {\em homogeneous} if $x$ is contained in a unique maximal $t$-ideal of $D$. As in \cite{c19}, we say that $D$ is a {\em homogeneous factorization domain} (HoFD) if each nonzero nonunit of $D$ can be written as a finite product of homogeneous elements. Let $D$ be an HoFD, let $x\in D$ be a nonzero nonunit and let $x=\prod_{i=1}^n a_i=\prod_{j=1}^m b_j$ be two finite products of $t$-comaximal homogeneous elements of $D$. Then $n=m$ and $a_iD=b_iD$ for $i\in [1,n]$ by reordering if necessary \cite[Remark 2.1]{c19}. Hence, an HoFD has a unique factorization property even though it is not a unique factorization domain (UFD). In \cite{c19}, Chang studied several properties of HoFDs and constructed examples of HoFDs. In this paper, we continue to study the unique factorization property of non-unique factorization domains.

As in \cite[Appendix 3]{zs61}, we say that an ideal $I$ of $D$ is a {\em valuation ideal} if there is a valuation overring $V$ of $D$ such that $IV\cap D=I$. Clearly, each ideal of a valuation domain is a valuation ideal. Conversely, in \cite[Corollary 2.4]{go65}, Gilmer and Ohm showed that if every principal ideal of $D$ is a valuation ideal, then $D$ is a valuation domain. In this paper, we will say that a nonzero nonunit $a\in D$ is a {\em valuation element} if $aD$ is a valuation ideal, i.e., there is a valuation overring $V$ of $D$ such that $aV\cap D=aD$. It is well known that a prime ideal of $D$ is a valuation ideal \cite[page 341]{zs61}. Hence, every prime element is a valuation element. Thus, every nonzero nonunit of a UFD can be written as a finite product of valuation elements. We will say that $D$ is a {\em valuation factorization domain} (VFD) if each nonzero nonunit of $D$ can be written as a finite product of valuation elements. Clearly, valuation domains and UFDs are VFDs. The purpose of this paper is to study some factorization properties of VFDs.

\subsection{Definitions related to the $t$-operation.}
We first review some definitions related to the $t$-operation which are needed for fully understanding this paper. Let $D$ be an integral domain with quotient field $K$. A $D$-submodule $A$ of $K$ is called a fractional ideal of $D$ if $dA\subseteq D$ for some nonzero $d\in D$. Let $F(D)$ (resp., $f(D)$) be the set of nonzero fractional (resp., nonzero finitely generated fractional) ideals of $D$. For $A\in F(D)$, let $A^{-1}=\{x\in K\mid xA\subseteq D\}$; then $A^{-1}\in F(D)$. Hence, if we set

\vspace{.1cm}
\begin{itemize}
\item $A_v=(A^{-1})^{-1}$ and

\item $A_t=\bigcup\{I_v\mid I\subseteq A$ and $I\in f(D)\}$,
\end{itemize}

\vspace{.1cm}
\noindent
then the $v$- and $t$-operations are well defined. It is easy to see that $I\subseteq I_t\subseteq I_v$ for all $I\in F(D)$ and $I_t=I_v$ if $I$ is finitely generated. Let $*=v$ or $t$. An $I\in F(D)$ is called a {\em $*$-ideal} if $I_*=I$. A $*$-ideal is a {\em maximal $*$-ideal} if it is maximal among the proper integral $*$-ideals. Let $*$-${\rm Max}(D)$ be the set of maximal $*$-ideals of $D$. It may happen that $v$-${\rm Max}(D)=\emptyset$ even though $D$ is not a field as in the case of a rank-one nondiscrete valuation domain $D$. However, $t$-${\rm Max}(D)\neq\emptyset$ if and only if $D$ is not a field; each maximal $t$-ideal of $D$ is a prime ideal; each proper $t$-ideal of $D$ is contained in a maximal $t$-ideal; each prime ideal of $D$ minimal over a $t$-ideal is a $t$-ideal, whence each height-one prime ideal is a $t$-ideal; and $D=\bigcap_{P\in t\text{-{\rm Max}}(D)}D_P$. An integral domain $D$ is said to be of {\em finite $(t$-$)$character} if each nonzero nonunit of $D$ is contained in only finitely many maximal ($t$-)ideals. Let ${\rm Spec}(D)$ (resp., $t$-${\rm Spec}(D)$) be the set of prime ideals (resp., prime $t$-ideals) of $D$; so $t$-Max$(D) \subseteq t$-Spec$(D) \subseteq$ Spec$(D) \setminus \{(0)\}$. The $t$-dimension of $D$ is defined by $t$-$\dim(D)=\sup\{n\mid P_1\subsetneq\cdots\subsetneq P_n$ for some prime $t$-ideals $P_i$ of $D\}$. Hence, $t$-$\dim(D)=1$ if and only if $D$ is not a field and $t$-${\rm Max}(D)=t$-${\rm Spec}(D)$, and if $\dim(D)=1$, then $t$-${\rm Max}(D)=t$-${\rm Spec}(D)={\rm Spec}(D)\setminus \{(0)\}$.

An $I\in F(D)$ is said to be {\em invertible} (resp., {\em $t$-invertible}) if $II^{-1}=D$ (resp., $(II^{-1})_t=D$). It is easy to see that invertible ideals are $t$-invertible $t$-ideals. We say that $D$ is a {\em Pr\"ufer $v$-multiplication domain} (P$v$MD) if each nonzero finitely generated ideal of $D$ is $t$-invertible. It is known that $D$ is a P$v$MD if and only if $D_P$ is a valuation domain for all maximal $t$-ideals $P$ of $D$, if and only if $D[X]$, the polynomial ring over $D$, is a P$v$MD \cite[Theorems 3.2 and 3.7]{k89}; and a Pr\"ufer domain is a P$v$MD whose maximal ideals are $t$-ideals. Let ${\rm T}(D)$ be the set of $t$-invertible fractional $t$-ideals. Then ${\rm T}(D)$ is an abelian group under $I*J=(IJ)_t$. Let ${\rm Inv}(D)$ (resp., ${\rm Prin}(D)$) be the subgroup of ${\rm T}(D)$ of invertible (resp., nonzero principal) fractional ideals of $D$. The factor group ${\rm Cl}_t(D)={\rm T}(D)/{\rm Prin}(D)$, called the {\em $t$-class group} of $D$, is an abelian group and ${\rm Pic}(D)={\rm Inv}(D)/{\rm Prin}(D)$, called the Picard group of $D$, is a subgroup of ${\rm Cl}_t(D)$. A GCD-domain is just a P$v$MD with trivial $t$-class group.

\subsection{Results}
This paper consists of five sections including the introduction. Let $D$ be an integral domain. In Section 1, we study basic properties of valuation elements and VFDs. Among other things, we show that (i) a VFD is integrally closed, (ii) if $D$ is not a field, then $D$ is a VFD with $t$-$\dim(D)=1$ if and only if $D$ is a weakly factorial GCD-domain and (iii) every nonzero nonunit of a VFD can be written as a finite product of incomparable valuation elements. In Section 2, we show that (i) a VFD is a Schreier domain, and hence it has a trivial $t$-class group and (ii) a UMT-domain $D$ is a VFD if and only if $D[X]$ is a VFD. In Section 3, we study VFDs that are HoFDs. We show that if $t$-${\rm Spec}(D)$ is treed, then (i) every valuation element is a homogeneous element and (ii) $D$ is a VFD if and only if $D$ is a weakly Matlis GCD-domain, if and only if $D[X]$ is a VFD. Finally, in Section 4, we introduce the notion of UVFDs and show that the UVFDs are precisely the weakly Matlis GCD-domains. We also characterize when a VFD is a UVFD.

\section{Valuation elements and VFDs}

Let $D$ be an integral domain with quotient field $K$. Let $\mathbb{N}$ be the set of positive integers and let $\mathbb{N}_0$ be the set of non-negative integers. For elements $a,b\in D$, we say that $a$ divides $b$ (denoted by $a\mid_D b$) if $b=ac$ for some $c\in D$. In this section, we study basic properties of valuation elements and VFDs. Our first result is very simple, but it plays a key role in the study of VFDs.

\begin{proposition}\label{proposition1.1}
Let $D$ be an integral domain, let $D'$ be an overring of $D$ and let $a,b\in D$ be such that $a\not=0$ and $aD'\cap D=aD$.
\begin{enumerate}
\item If $bD'\cap D=bD$, then $abD'\cap D=abD$.
\item If $b\mid_D a$, then $bD'\cap D=bD$.
\item If $\sqrt{aD}\subseteq\sqrt{bD}$, then $bD'\cap D=bD$. In particular, if $a$ is a valuation element of $D$ and $\sqrt{aD}\subseteq\sqrt{bD}\subsetneq D$, then $b$ is a valuation element of $D$.
\end{enumerate}
\end{proposition}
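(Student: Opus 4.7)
The plan is to prove the three parts in order, using (1) as the engine for iterating and (2) as the engine for descending divisibility, then combining these to get (3).

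For part (1), I would take an arbitrary element $x \in abD'\cap D$. Writing $x = abd'$ with $d' \in D'$, the element $x$ lies in $aD' \cap D$ and hence, by hypothesis on $a$, in $aD$; so $x = ay$ for some $y \in D$. Cancelling $a$ (legal since $a \neq 0$) gives $y = bd' \in bD' \cap D = bD$ by the hypothesis on $b$, and therefore $x = ay \in abD$. The reverse inclusion $abD \subseteq abD' \cap D$ is immediate.

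For part (2), write $a = bc$ with $c \in D$; note $c \neq 0$ since $a \neq 0$. For $x \in bD' \cap D$, write $x = bd'$ with $d' \in D'$. Multiplying by $c$ gives $cx = bcd' = ad' \in aD' \cap D = aD$, so $cx = ae$ for some $e \in D$, i.e., $bce = bcd' = cx$ forces $x = be \in bD$ after cancelling $c$. So $bD' \cap D \subseteq bD$, and again the reverse inclusion is trivial.

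For part (3), the assumption $\sqrt{aD} \subseteq \sqrt{bD}$ gives some $n \in \mathbb{N}$ with $a^n \in bD$, i.e., $b\mid_D a^n$. An easy induction on $n$ using part (1) shows $a^n D' \cap D = a^n D$. Now apply part (2) with $a^n$ playing the role of $a$ and $b\mid_D a^n$ to conclude $bD' \cap D = bD$. For the ``In particular'' clause, pick a valuation overring $V$ of $D$ witnessing that $a$ is a valuation element; the main part of (3) then gives $bV \cap D = bD$, and $b$ is a nonzero nonunit because $\sqrt{bD} \subsetneq D$ forces $b$ to be a nonunit, while $b = 0$ would make $\sqrt{bD} = (0)$ and contradict $a \in \sqrt{bD}$ with $a \neq 0$.

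None of the steps presents a real obstacle: the whole proposition is essentially bookkeeping around the cancellation $a \neq 0$ in the domain $D$. The only place to be slightly careful is the induction in (3) justifying that powers of a valuation element still satisfy the contraction identity, and the verification in the final clause that $b$ is genuinely a nonzero nonunit so that being a valuation element is well-defined.
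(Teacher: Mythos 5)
Your proof is correct and follows essentially the same route as the paper: parts (1) and (2) are the element-wise versions of the paper's ideal-arithmetic computations, and part (3) is derived exactly as in the paper by choosing $k$ with $a^k\in bD$, iterating (1) to get $a^kD'\cap D=a^kD$, and then applying (2). Your extra check that $b$ is a nonzero nonunit in the ``in particular'' clause is a harmless addition the paper leaves implicit.
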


\begin{proof}
(1) Let $bD'\cap D=bD$. Observe that $abD=a(bD)=a(bD'\cap D)=abD'\cap aD=abD'\cap aD'\cap D=abD'\cap D$.

(2) Let $b\mid_D a$. There is some $c\in D$ such that $a=bc$. We infer that $bD'\cap D=c^{-1}aD'\cap D=c^{-1}(aD'\cap cD)=c^{-1}(aD'\cap D\cap cD)=c^{-1}(aD\cap cD)=bD\cap D=bD$.

(3) Let $\sqrt{aD}\subseteq\sqrt{bD}$. Then there is some $k\in\mathbb{N}$ such that $a^k\in bD$.
By (1), $a^kD'\cap D=a^kD$, and since $b\mid_D a^k$, we infer by (2) that $bD'\cap D=bD$.
\end{proof}

\begin{corollary}\label{corollary1.2}
Let $D$ be an integral domain and let $a\in D$ be a valuation element.
\begin{enumerate}
\item If $b\in D$ is such that $\sqrt{aD}\subseteq\sqrt{bD}$, then $aD$ and $bD$ are comparable.
\item Each two principal ideals of $D$ that contain $a$ are comparable.
\item $\bigcap_{n\in\mathbb{N}} a^nD\in {\rm Spec}(D)$.
\end{enumerate}
\end{corollary}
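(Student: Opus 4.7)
The unifying idea is to fix a valuation overring $V$ of $D$ witnessing $aV\cap D=aD$ and exploit two facts together: Proposition~\ref{proposition1.1} propagates the contraction identity ``$xV\cap D=xD$'' from $a$ to other elements $x$ suitably related to $a$, while inside the valuation domain $V$ principal ideals are totally ordered. Combining these, comparability and primality questions about principal ideals of $D$ can be resolved inside $V$ and pulled back by intersecting with $D$.

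For (1), I would first dispose of the trivial cases: $b=0$ cannot occur, since then $\sqrt{bD}=(0)$ would force $a=0$, and if $b$ is a unit then $bD=D\supseteq aD$ is automatic. In the remaining case Proposition~\ref{proposition1.1}(3) yields $bV\cap D=bD$, and since $aV,bV$ are comparable in $V$, intersecting with $D$ gives the desired comparability of $aD$ and $bD$. For (2), if $a\in bD\cap cD$ then $b\mid_D a$ and $c\mid_D a$, so Proposition~\ref{proposition1.1}(2) gives $bV\cap D=bD$ and $cV\cap D=cD$; the total order on $bV$ and $cV$ in $V$ again descends to $bD$ and $cD$.

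For (3), I would first observe by induction via Proposition~\ref{proposition1.1}(1) (applied with $b=a$ at each step) that $a^nV\cap D=a^nD$ for every $n\in\mathbb{N}$, whence $\bigcap_{n}a^nD=\bigl(\bigcap_{n}a^nV\bigr)\cap D$. Since $aD\subsetneq D$ and $aV\cap D=aD$, $a$ must be a nonunit of $V$. The remaining key step is the standard valuation-theoretic fact that $Q:=\bigcap_{n}a^nV$ is a prime ideal of $V$: letting $v$ be a valuation of $V$, if $x,y\notin Q$ then $v(x)<mv(a)$ and $v(y)<kv(a)$ for some $m,k\in\mathbb{N}$, so $v(xy)<(m+k)v(a)$ and $xy\notin Q$. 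Hence $\bigcap_n a^nD=Q\cap D$ is a prime ideal of $D$ as the contraction of a prime. The only non-mechanical ingredient is this primality claim about $\bigcap_n a^nV$, which becomes transparent once one thinks in terms of the value group, so I foresee no real obstacle.
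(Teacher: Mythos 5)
Your proof is correct and follows essentially the same route as the paper: fix a valuation overring $V$ with $aV\cap D=aD$, propagate the identity $xV\cap D=xD$ via Proposition~\ref{proposition1.1}, and pull comparability and primality back from $V$ to $D$. The only additions are the (harmless) treatment of the trivial cases in (1) and an explicit value-group verification that $\bigcap_{n}a^nV$ is prime, which the paper takes as known.
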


\begin{proof}
There is some valuation overring $V$ of $D$ such that $aV\cap D=aD$.

(1) Let $b\in D$ be such that $\sqrt{aD}\subseteq\sqrt{bD}$. It follows from Proposition~\ref{proposition1.1}(3) that $bV\cap D=bD$. Since $V$ is a valuation domain, we have that $aV$ and $bV$ are comparable, and hence $aD$ and $bD$ are comparable.

(2) Let $b,c\in D$ be such that $a\in bD\cap cD$. Then $bV\cap D=bD$ and $cV\cap D=cD$ by Proposition~\ref{proposition1.1}(2). Since $bV$ and $cV$ are comparable, we infer that $bD$ and $cD$ are comparable.

(3) It follows from Proposition~\ref{proposition1.1}(1) that $a^nV\cap D=a^nD$ for each $n\in\mathbb{N}$. Therefore, $(\bigcap_{n\in\mathbb{N}} a^nV)\cap D=\bigcap_{n\in\mathbb{N}} a^nD$. Since $a$ is not a unit of $V$, we have that $\bigcap_{n\in\mathbb{N}} a^nV\in {\rm Spec}(V)$, and thus $\bigcap_{n\in\mathbb{N}} a^nD\in {\rm Spec}(D)$.
\end{proof}

\begin{remark}\label{remark1.3}
{\em Let $D$ be a VFD, let $a\in D$ be a valuation element and let $Q\in {\rm Spec}(D)$ be such that $Q\subsetneq\sqrt{aD}$. Then $Q\subseteq\bigcap_{n\in\mathbb{N}} a^nD$.}
\end{remark}

\begin{proof}
Let $x\in Q\setminus\{0\}$. Then $x\in bD\subseteq Q$ for some valuation element $b\in D$. We have that $\sqrt{bD}\subsetneq\sqrt{aD}=\sqrt{a^nD}$ for each $n\in\mathbb{N}$, and hence $x\in bD\subseteq\bigcap_{n\in\mathbb{N}} a^nD$ by Corollary~\ref{corollary1.2}(1). Consequently, $Q\subseteq\bigcap_{n\in\mathbb{N}} a^nD$.
\end{proof}

\begin{corollary}\label{corollary1.4}\cite[Corollary 2.4]{go65}
Let $D$ be an integral domain. If every nonzero nonunit of $D$ is a valuation element, then $D$ is a valuation domain.
\end{corollary}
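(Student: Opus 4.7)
The plan is to deduce the corollary directly from Corollary~\ref{corollary1.2}(2). To show that $D$ is a valuation domain, it suffices to show that any two nonzero principal ideals of $D$ are comparable under inclusion.

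First, I would dispose of the trivial case where $D$ is a field. Assuming $D$ is not a field, let $a,b\in D$ be arbitrary nonzero elements. If either $a$ or $b$ is a unit, then one of $aD,bD$ equals $D$ and comparability is immediate. So I may assume both $a$ and $b$ are nonzero nonunits. Since $D$ is an integral domain, the product $ab$ is then also a nonzero nonunit, and so by hypothesis $ab$ is a valuation element of $D$.

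Now $ab\in aD$ and $ab\in bD$, so both $aD$ and $bD$ are principal ideals of $D$ containing the valuation element $ab$. Applying Corollary~\ref{corollary1.2}(2) to $ab$ then yields that $aD$ and $bD$ are comparable, which is exactly what is needed to conclude that $D$ is a valuation domain.

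There is no real obstacle here: the entire content has been front-loaded into Corollary~\ref{corollary1.2}(2), and the only observation required is the standard trick of multiplying two nonunits to produce a nonunit lying in both principal ideals. I would only need to take minor care that $D$ is not a field (so that the hypothesis about every nonzero nonunit being a valuation element is actually used, and so that $ab$ is genuinely a nonunit when $a,b$ are).
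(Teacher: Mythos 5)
Your proposal is correct and is essentially identical to the paper's own proof: the paper also forms the product $ab$ of two nonzero nonunits, notes that $aD$ and $bD$ are principal ideals containing the valuation element $ab$, and invokes Corollary~\ref{corollary1.2}(2). The extra care about units and the field case is harmless but not needed beyond what the paper's one-line argument already handles.
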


\begin{proof}
Let $a,b\in D$ be nonzero nonunits. Then $ab$ is a valuation element by assumption. Note that $aD$ and $bD$ are principal ideals of $D$ that contain $ab$. Consequently, $aD$ and $bD$ are comparable by Corollary~\ref{corollary1.2}(2). Therefore, $D$ is a valuation domain.
\end{proof}

\begin{corollary}\label{corollary1.5}
A VFD is integrally closed.
\end{corollary}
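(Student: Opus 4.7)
The plan is to show directly that every element of $K$ integral over $D$ already lies in $D$. Take $x\in K$ integral over $D$ and write $x=r/s$ with $r,s\in D$, $s\neq 0$. Using the VFD hypothesis, write $s$ as a product $s=v_1\cdots v_n$ of valuation elements, interpreting $n=0$ as the case where $s$ is a unit (if $s$ is a nonunit, such a factorization exists by definition). I will induct on $n$.

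The base case $n=0$ is immediate: $s$ is a unit, so $x=rs^{-1}\in D$. For the inductive step ($n\geq 1$), pick a valuation overring $V_1$ of $D$ witnessing that $v_1$ is a valuation element, that is, $v_1V_1\cap D=v_1D$. Since $V_1$ is a valuation domain, it is integrally closed in $K$, so $x$, being integral over $D\subseteq V_1$, must lie in $V_1$. Because $v_2\cdots v_n\in D\subseteq V_1$ and $x\in V_1$, the identity $r=sx=v_1\cdot(v_2\cdots v_n)\cdot x$ shows $r\in v_1V_1$; combined with $r\in D$, this yields $r\in v_1V_1\cap D=v_1D$. Hence $r=v_1r_1$ for some $r_1\in D$, and $x=r_1/(v_2\cdots v_n)$, where the new denominator is either $1$ (if $n=1$) or a product of $n-1$ valuation elements. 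The inductive hypothesis then gives $x\in D$.

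There is really no serious obstacle here: the argument only combines the defining property of a valuation element with the fact that valuation domains are integrally closed, and the induction is driven by the VFD hypothesis. The only point requiring mild care is keeping track of the denominator after the reduction, namely that $v_2\cdots v_n$ still admits a valuation factorization of strictly smaller length than $s$, which is transparent from the construction. Once the induction closes, $D$ absorbs every element of $K$ that is integral over it, so $D$ is integrally closed.
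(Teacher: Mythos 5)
Your proof is correct and follows essentially the same route as the paper: both arguments rest on the facts that an element integral over $D$ lies in every valuation overring witnessing a valuation element, and that $vV\cap D=vD$ then allows that factor to be cancelled from the denominator. Your induction on the length of the valuation factorization of $s$ simply carries out by hand what the paper delegates to Proposition~\ref{proposition1.1}(1) applied to the overring $\overline{D}$.
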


\begin{proof}
Let $D$ be a VFD, let $\overline{D}$ be the integral closure of $D$ and let $a\in D$ be a valuation element. There is some valuation overring $V$ of $D$ such that $aV\cap D=aD$. Since $V$ is integrally closed, it follows that $\overline{D}\subseteq V$, and hence $a\overline{D}\cap D=aD$. It is an immediate consequence of Proposition~\ref{proposition1.1}(1) that $x\overline{D}\cap D=xD$ for each $x\in D$. If $y\in\overline{D}$, then $yz\in D$ for some nonzero $z\in D$, and thus $yz\in z\overline{D}\cap D=zD$ and $y\in D$. Consequently, $D=\overline{D}$.
\end{proof}

\begin{corollary}\label{corollary1.6}
Let $D$ be a quasi-local domain of dimension one. The following statements are equivalent.
\begin{enumerate}
\item $D$ is a valuation domain.
\item $D$ has at least one valuation element.
\item $D$ is a VFD.
\end{enumerate}
\end{corollary}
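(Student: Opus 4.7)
The plan is to close the cycle $(1)\Rightarrow(3)\Rightarrow(2)\Rightarrow(1)$. The first two implications are essentially immediate: if $D$ is a valuation domain, take $V=D$ itself as a valuation overring to see that every nonzero nonunit $a$ satisfies $aV\cap D=aD$, so every nonzero nonunit is a valuation element and $D$ is trivially a VFD, giving $(1)\Rightarrow(3)$. For $(3)\Rightarrow(2)$, note that since $\dim(D)=1$ the domain $D$ is not a field, so there exists a nonzero nonunit, which by the VFD hypothesis is a product of valuation elements, at least one of which must then exist.

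The substantive implication is $(2)\Rightarrow(1)$. Let $a\in D$ be a valuation element, and let $M$ denote the unique maximal ideal of $D$. Because $D$ is quasi-local of dimension one, $M$ is the unique nonzero prime ideal of $D$, so for every nonzero nonunit $b\in D$ one has $\sqrt{aD}=M=\sqrt{bD}$. I would then invoke Proposition~\ref{proposition1.1}(3) (the ``in particular'' clause): from $a$ being a valuation element together with $\sqrt{aD}\subseteq\sqrt{bD}\subsetneq D$, it follows that $b$ is also a valuation element. Hence \emph{every} nonzero nonunit of $D$ is a valuation element, and Corollary~\ref{corollary1.4} (which is the Gilmer--Ohm result already recorded) gives that $D$ is a valuation domain.

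The only step requiring any care is verifying that the hypotheses of Proposition~\ref{proposition1.1}(3) genuinely apply in the one-dimensional quasi-local setting; this reduces to the standard fact that in such a $D$, the radical of any proper nonzero principal ideal equals $M$. No delicate estimate or construction is needed, so I do not anticipate a real obstacle; the proof is essentially a one-line application of Proposition~\ref{proposition1.1}(3) once one notices that dimension one forces all radicals of proper principal ideals to coincide.
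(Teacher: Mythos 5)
Your proposal is correct and follows the paper's own argument exactly: the substantive implication $(2)\Rightarrow(1)$ uses that dimension one forces $\sqrt{aD}=\sqrt{bD}$ for all nonzero nonunits, then applies Proposition~\ref{proposition1.1}(3) and Corollary~\ref{corollary1.4}, which is precisely what the paper does. The remaining implications are treated as immediate in both versions.
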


\begin{proof}
(1) $\Rightarrow$ (3) $\Rightarrow$ (2) This is clear.

(2) $\Rightarrow$ (1) Let $a\in D$ be a valuation element and let $b\in D$ be a nonzero nonunit. Then $\sqrt{aD}=\sqrt{bD}$, and hence $b$ is a valuation element of $D$ by Proposition~\ref{proposition1.1}(3). Therefore, $D$ is a valuation domain by Corollary~\ref{corollary1.4}.
\end{proof}

A nonzero nonunit $x$ of $D$ is said to be {\em primary} if $xD$ is a primary ideal. Clearly, prime elements are primary but not vice versa.
\begin{proposition}\label{proposition1.7}
Let $D$ be an integral domain, let $a\in D$ be a valuation element and let $S$ be a multiplicatively closed subset of $D$.
\begin{enumerate}
\item $\sqrt{aD}$ is a prime $t$-ideal.
\item $a$ is a primary element if and only if $\sqrt{aD}$ is a maximal $t$-ideal.
\item If $t$-$\dim(D)=1$, then every valuation element of $D$ is a primary element.
\item If $aS^{-1}D\subsetneq S^{-1}D$, then $a$ is a valuation element of $S^{-1}D$.
\end{enumerate}
\end{proposition}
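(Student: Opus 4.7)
The plan is to handle the four parts in sequence, using the defining identity $aV\cap D=aD$ for a valuation overring $V$ as the main lever.

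For (1), the first step is to observe that $\sqrt{aD}=\sqrt{aV}\cap D$, which follows because $x\in D$ has $x^n\in aV$ iff $x^n\in aV\cap D=aD$. Since the primes of the valuation domain $V$ form a chain and $\sqrt{aV}$ is the intersection of those primes containing $aV$, $\sqrt{aV}$ is prime (intersection of a chain of primes is prime), hence so is its contraction $\sqrt{aD}$. For the $t$-ideal part, $aD$ is a principal, hence $t$-, ideal; and $\sqrt{aD}$, being itself prime, is the unique minimal prime above $aD$. The fact recalled in the introduction that primes minimal over a $t$-ideal are $t$-ideals then finishes (1).

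For (2), the ``$\Leftarrow$'' direction rests on $D=\bigcap_{M\in t\text{-Max}(D)}D_M$, which upgrades to $aD=\bigcap_M aD_M$ for the principal ideal $aD$. If $P:=\sqrt{aD}$ is maximal among $t$-ideals, then for any other maximal $t$-ideal $M\neq P$ one has $a\notin M$ (otherwise $P\subseteq M$ contradicts the maximality of $P$), so $aD_M=D_M$; assembling the local pieces yields $aD=aD_P\cap D$, which is precisely the $P$-primariness of $aD$. The ``$\Rightarrow$'' direction is what I expect to be the main obstacle; I would proceed by contradiction. Suppose some prime $t$-ideal $N$ strictly contains $P$ and pick $y\in N\setminus P$. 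The finitely generated ideal $(a,y)$ lies in the $t$-ideal $N$, so $(a,y)_v=(a,y)_t\subseteq N\subsetneq D$, i.e., $(a,y)^{-1}\supsetneq D$, producing some $z\in K\setminus D$ with $za,zy\in D$. Setting $c:=za\in D$ gives $cy=(zy)a\in aD$ with $y\notin P$, whence primariness forces $c\in aD$ and therefore $z=c/a\in D$, a contradiction. The subtlety is in the passage from the failure of ``$P$ is maximal'' to an explicit fraction $z\notin D$ that is cleared simultaneously by $a$ and $y$; this is the step that witnesses the failure of the primary condition.

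Parts (3) and (4) then follow cleanly. For (3), $t$-$\dim(D)=1$ means that every nonzero prime $t$-ideal is maximal, so (1) already shows $\sqrt{aD}$ is a maximal $t$-ideal, and (2) gives primariness. For (4), the candidate valuation overring of $S^{-1}D$ is $V':=S^{-1}V$, which is a valuation domain (localizations of valuations are valuations) containing $S^{-1}D$. The content is to verify $aV'\cap S^{-1}D=aS^{-1}D$: given $x\in aV'\cap S^{-1}D$, write $x=av/s=d/s'$ with $v\in V$, $d\in D$, $s,s'\in S$; then the cross product $ds=avs'$ lies in $aV\cap D=aD$, so $ds=ad'$ for some $d'\in D$, and $x=ad'/(ss')\in aS^{-1}D$. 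The hypothesis $aS^{-1}D\subsetneq S^{-1}D$ guarantees $a$ is a nonzero nonunit of $S^{-1}D$, completing the identification of $a$ as a valuation element there.
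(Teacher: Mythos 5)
Your argument is correct, and for parts (1), (3) and (4) it is essentially the paper's own proof: the identity $\sqrt{aD}=\sqrt{aV}\cap D$ together with the fact that the (unique) minimal prime of the $t$-ideal $aD$ is a $t$-ideal for (1), the reduction of (3) to (1) and (2), and the overring $S^{-1}V$ for (4) (the paper phrases that last computation as $S^{-1}(aV\cap D)=aS^{-1}V\cap S^{-1}D$ rather than chasing elements, but it is the same verification). The one place you genuinely diverge is part (2), which the paper disposes of by citing \cite[Lemma 2.1]{acp03}; you prove it from scratch. Your ``$\Leftarrow$'' direction via $aD=\bigcap_{M}aD_M=aD_P\cap D$ (intersection over the maximal $t$-ideals $M$, using that $a$ lies in no maximal $t$-ideal other than $P$), and your ``$\Rightarrow$'' direction via $(a,y)_v\subseteq N\subsetneq D$, hence $(a,y)^{-1}\supsetneq D$, followed by applying the primary condition to $cy=(zy)a$ with $c=za$, are both sound; this buys a self-contained exposition at the cost of some length, and it incidentally shows the equivalence needs nothing beyond $\sqrt{aD}$ being a prime $t$-ideal. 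One small point worth making explicit in the ``$\Rightarrow$'' direction: ruling out prime $t$-ideals strictly above $P=\sqrt{aD}$ only yields that $P$ is a \emph{maximal} $t$-ideal once you know $P$ is itself a $t$-ideal, which is exactly part (1), so you should invoke it there.
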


\begin{proof}
(1) Let $V$ be a valuation overring of $D$ such that $aD=aV\cap D$. Then $\sqrt{aD}=\sqrt{aV}\cap D$, and since $\sqrt{aV}$ is a prime ideal, $\sqrt{aD}$ is a prime ideal. Clearly, $\sqrt{aD}$ is minimal over $aD$ and $aD$ is a $t$-ideal. Thus, $\sqrt{aD}$ is a prime $t$-ideal.

(2) This follows from \cite[Lemma 2.1]{acp03}.

(3) Let $t$-$\dim(D)=1$ and let $b\in D$ be a valuation element. Then $\sqrt{bD}$ is a maximal $t$-ideal by (1) and assumption. Thus, by (2), $b$ is a primary element.

(4) Let $aS^{-1}D\subsetneq S^{-1}D$. There is some valuation overring $V$ of $D$ such that $aV\cap D=aD$. Observe that $S$ is a multiplicatively closed subset of $V$, and hence $S^{-1}V$ is an overring of $V$. Since $V$ is a valuation domain, we have that $S^{-1}V$ is a valuation domain. Note that $aS^{-1}D=S^{-1}(aD)=S^{-1}(aV\cap D)=aS^{-1}V\cap S^{-1}D$. Thus, $a$ is a valuation element of $S^{-1}D$.
\end{proof}

\begin{corollary}\label{corollary1.8}
Let $D$ be a VFD and let $S$ be a multiplicatively closed subset of $D$.
\begin{enumerate}
\item $S^{-1}D$ is a VFD.
\item If $P$ is a height-one prime ideal of $D$, then $D_P$ is a valuation domain.
\end{enumerate}
\end{corollary}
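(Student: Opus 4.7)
The plan is to deduce both parts from Proposition~\ref{proposition1.7}(4) together with the characterization in Corollary~\ref{corollary1.6}. First I would dispose of a small technicality: if $b$ is a valuation element of a domain $R$ witnessed by $bV\cap R=bR$, and $u\in R$ is a unit, then $(ub)V\cap R=bV\cap R=bR=(ub)R$, so the class of valuation elements of $R$ is closed under multiplication by units. This lets me freely absorb units into factors throughout the argument.

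For (1), take a nonzero nonunit $y\in S^{-1}D$ and write $y=a/s$ with $a\in D$ and $s\in S$. Since $1/s$ is a unit of $S^{-1}D$ and $y$ is a nonunit, $a$ must be a nonunit of $S^{-1}D$, and in particular a nonzero nonunit of $D$. By hypothesis, factor $a=a_1\cdots a_n$ into valuation elements of $D$. For each $i$, either $a_iS^{-1}D=S^{-1}D$, in which case $a_i$ is a unit of $S^{-1}D$, or $a_iS^{-1}D\subsetneq S^{-1}D$, in which case $a_i$ is a valuation element of $S^{-1}D$ by Proposition~\ref{proposition1.7}(4). Collecting the unit factors together with $1/s$ into a single unit $u$ of $S^{-1}D$, and noting that at least one $a_i$ is a nonunit of $S^{-1}D$ (because $y$ is), I rewrite $y$ as a product of valuation elements of $S^{-1}D$ by absorbing $u$ into any one of the remaining factors.

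For (2), I would specialize (1) to $S=D\setminus P$, which gives that $D_P$ is a VFD. Since $P$ has height one, the primes of $D_P$ are exactly $(0)$ and $PD_P$, so $D_P$ is a quasi-local domain of dimension one. Corollary~\ref{corollary1.6} then yields that $D_P$ is a valuation domain.

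The only mildly subtle point is the bookkeeping with units in (1); everything else is a direct appeal to Proposition~\ref{proposition1.7}(4) and Corollary~\ref{corollary1.6}.
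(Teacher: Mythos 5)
Your proof is correct and follows the same route as the paper: part (1) from Proposition~\ref{proposition1.7}(4) and part (2) from (1) together with Corollary~\ref{corollary1.6}. The unit-bookkeeping you spell out is exactly the detail the paper leaves implicit, and it is handled correctly.
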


\begin{proof}
(1) This follows directly from Proposition~\ref{proposition1.7}(4).

(2) This is an immediate consequence of (1) and Corollary~\ref{corollary1.6}.
\end{proof}

An integral domain $D$ is a {\em weakly factorial domain} (WFD) if every nonzero nonunit of $D$ can be written as a finite product of primary elements. Let $X^1(D)$ be the set of height-one prime ideals of $D$. It is known that $D$ is a WFD if and only if $D=\bigcap_{P\in X^1(D)} D_P$, where the intersection is locally finite (i.e., for each nonzero $x\in D$, $x$ is a unit of $D_P$ for all but finitely many $P\in X^1(D)$) and ${\rm Cl}_t(D)=\{0\}$ \cite[Theorem]{az90}; in this case, $t$-$\dim(D)=1$ (cf. Proposition~\ref{proposition1.7}(2)).

\begin{corollary}\label{corollary1.9}
Let $D$ be an integral domain that is not a field. Then $D$ is a VFD with $t$-$\dim(D)=1$ if and only if $D$ is a weakly factorial GCD-domain.
\end{corollary}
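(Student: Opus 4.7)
The plan is to handle each implication separately. For the forward direction, assume $D$ is a VFD with $t$-$\dim(D)=1$. Proposition~\ref{proposition1.7}(3) says every valuation element is primary under the dimension hypothesis, so writing each nonzero nonunit as a product of valuation elements exhibits it as a product of primary elements; hence $D$ is a WFD. The WFD characterization recalled just above the corollary then gives ${\rm Cl}_t(D)=\{0\}$ for free. It remains to show $D$ is a P$v$MD, i.e. that $D_P$ is a valuation domain for each $P\in t$-${\rm Max}(D)$. I would argue that when $t$-$\dim(D)=1$ every maximal $t$-ideal is height-one: any nonzero prime $Q$ properly contained in $P$ contains a minimal prime over some principal ideal $xD\subseteq Q$, and such a minimal prime is a prime $t$-ideal, hence maximal by the dimension assumption, contradicting proper containment in $P$. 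With this in hand, Corollary~\ref{corollary1.8}(2) finishes the forward direction.

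For the backward direction, assume $D$ is a weakly factorial GCD-domain. The WFD characterization forces $t$-$\dim(D)=1$, so it only remains to verify that $D$ is a VFD. Since $D$ is a WFD, each nonzero nonunit is a product of primary elements, and it therefore suffices to show every primary element $x$ of $D$ is a valuation element. Set $P=\sqrt{xD}$; this is a prime $t$-ideal since $xD$ is a principal (hence $t$-)ideal and minimal primes over $t$-ideals are $t$-ideals. Since $D$ is a GCD-domain (hence a P$v$MD) and $P$ is a maximal $t$-ideal (using $t$-$\dim(D)=1$ again), $D_P$ is a valuation overring of $D$. The standard fact that $xD_P\cap D=xD$ for any $P$-primary element $x$ (if $s\notin P$ and $sa\in xD$, then $a\in xD$ since $xD$ is $P$-primary) then shows $x$ is a valuation element, so $D$ is a VFD.

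I expect the only mildly subtle step to be the identification of maximal $t$-ideals with height-one primes under $t$-$\dim(D)=1$; everything else is a direct citation of results from the excerpt (Proposition~\ref{proposition1.7}(3), Corollary~\ref{corollary1.8}(2), and the Anderson–Zafrullah WFD characterization) together with the standard behaviour of primary ideals under localization at their radicals.
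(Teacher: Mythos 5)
Your proof is correct and follows essentially the same route as the paper: the forward direction uses Proposition~\ref{proposition1.7}(3) to get the weakly factorial property and Corollary~\ref{corollary1.8}(2) to get valuation localizations, and the converse shows that every primary element is a valuation element exactly as the paper does. The only (harmless) difference is that the paper finishes the forward direction by citing a theorem of Anderson and Mahaney to pass from WFD plus valuation localizations to GCD, whereas you assemble the same conclusion by hand from the Anderson--Zafrullah characterization of WFDs (giving trivial $t$-class group) together with the observation that maximal $t$-ideals have height one, so that $D$ is a P$v$MD with trivial $t$-class group, i.e.\ a GCD-domain.
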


\begin{proof}
$(\Rightarrow)$ Let $D$ be a VFD of $t$-dimension one. It follows from Proposition~\ref{proposition1.7}(3) that $D$ is a weakly factorial domain. Thus, it is an immediate consequence of Corollary~\ref{corollary1.8}(2) and \cite[Theorem 18]{am88} that $D$ is a GCD-domain.

$(\Leftarrow)$ Now let $D$ be a weakly factorial GCD-domain. Then $t$-$\dim(D)=1$. We next show that every primary element is a valuation element. Let $a\in D$ be a primary element and let $P=\sqrt{aD}$. Then $P$ is a height-one prime ideal of $D$, and since $D$ is a GCD-domain, $D_P$ is a valuation domain. Note that $aD_P\cap D=aD$, and hence $a$ is a valuation element. Thus, $D$ is a VFD.
\end{proof}

Note that if $D$ is a (one-dimensional) B\'ezout domain which is not of finite character (e.g., let $D$ be the example in \cite[Theorem 3.4]{v78} or let $D$ be the ring of entire functions), then $D$ is a GCD-domain and yet $D$ is not a VFD (by Theorem~\ref{theorem3.4}). For more details concerning this example, we refer to \cite[Example 4.2]{r12}.

For $n\in\mathbb{N}$, let $[1,n]=\{k\in\mathbb{N}\mid 1\leq k\leq n\}$. Two elements $x$ and $y$ of an integral domain $D$ are said to be {\it incomparable} if $xD$ and $yD$ are incomparable under inclusion. We next show that each nonzero nonunit $a$ of a VFD $D$ can be written as a finite product of incomparable valuation elements, say, $a=\prod_{i=1}^n a_i$, and in this case, $n$ is the number of minimal prime ideals of $aD$ by a series of lemmas.

\begin{lemma}\label{lemma1.10}
Let $D$ be an integral domain. If $v\in D$ is a finite product of valuation elements of $D$ such that $\sqrt{vD}$ is a prime ideal, then $v$ is a valuation element.
\end{lemma}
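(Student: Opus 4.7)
The plan is to reduce to the case where all the valuation elements in a given factorization of $v$ are witnessed by one common valuation overring; once that reduction is in hand, Proposition~\ref{proposition1.1}(1) applied inductively delivers the conclusion directly.

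First, I would write $v=\prod_{i=1}^n a_i$ with each $a_i$ a valuation element of $D$, and fix for each $i$ a valuation overring $V_i$ of $D$ with $a_iV_i\cap D=a_iD$. By Proposition~\ref{proposition1.7}(1), each $\sqrt{a_iD}$ is a prime ideal, and clearly $\sqrt{vD}=\bigcap_{i=1}^n\sqrt{a_iD}$. Since $\prod_{i=1}^n\sqrt{a_iD}\subseteq\bigcap_{i=1}^n\sqrt{a_iD}=\sqrt{vD}$ and $\sqrt{vD}$ is prime by hypothesis, the standard ``prime contains a product of ideals implies it contains a factor'' argument forces $\sqrt{a_jD}\subseteq\sqrt{vD}$ for some $j\in[1,n]$. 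Combined with the reverse inclusion coming from the intersection, this yields $\sqrt{a_jD}=\sqrt{vD}\subseteq\sqrt{a_kD}$ for every $k\in[1,n]$.

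Next, I would set $V=V_j$, so that $a_jV\cap D=a_jD$, and apply Proposition~\ref{proposition1.1}(3) with $a=a_j$, $b=a_k$, and $D'=V$ to obtain $a_kV\cap D=a_kD$ for each $k\in[1,n]$ (the side condition $\sqrt{a_kD}\subsetneq D$ is automatic because $a_k$ is a nonunit). Thus $V$ is a single valuation overring witnessing every $a_k$ simultaneously, and an easy induction on $n$ using Proposition~\ref{proposition1.1}(1) then gives $vV\cap D=vD$, so $v$ is a valuation element.

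The main obstacle I anticipate is precisely this coordination of the individual witnesses $V_i$, since a priori there is no relation between them and the conclusion would fail without some mechanism for picking a common one. The hypothesis that $\sqrt{vD}$ is prime is what singles out a distinguished radical $\sqrt{a_jD}$ that is contained in all the others, and this minimality, fed into the monotonicity of valuation witnesses in Proposition~\ref{proposition1.1}(3), is exactly what lets $V_j$ serve as a uniform witness.
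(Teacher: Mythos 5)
Your proof is correct and follows essentially the same route as the paper: both identify an index $j$ with $\sqrt{a_jD}=\sqrt{vD}$ via primality of $\sqrt{vD}$ and then invoke Proposition~\ref{proposition1.1}(3). The only difference is that the paper applies Proposition~\ref{proposition1.1}(3) once, directly with $b=v$ (since $\sqrt{a_jD}\subseteq\sqrt{vD}$), whereas you apply it to each factor $a_k$ and then reassemble with Proposition~\ref{proposition1.1}(1) --- a slightly longer but equally valid path.
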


\begin{proof}
Let $k\in\mathbb{N}$ and let $v\in D$ be such that $\sqrt{vD}$ is a prime ideal of $D$ and $v=\prod_{i=1}^k v_i$ for some valuation elements $v_i\in D$. We have that $\sqrt{vD}=\bigcap_{i=1}^k\sqrt{v_iD}$. Since $\sqrt{vD}$ is a prime ideal of $D$, it follows that $\sqrt{vD}=\sqrt{v_jD}$ for some $j \in [1,k]$. It is an immediate consequence of Proposition~\ref{proposition1.1}(3) that $v$ is a valuation element of $D$.
\end{proof}

\begin{corollary}\label{corollary1.11}
Let $D$ be a VFD. Then the valuation elements of $D$ are precisely the elements $a\in D$ for which $\sqrt{aD}$ is a nonzero prime ideal of $D$.
\end{corollary}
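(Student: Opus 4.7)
The plan is to handle the two implications separately, with each direction reducing quickly to results already established.

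For the forward direction, suppose $a \in D$ is a valuation element. Then by definition $a$ is a nonzero nonunit, so $aD$ is a proper nonzero ideal and $\sqrt{aD}$ is a proper nonzero radical ideal. Proposition~\ref{proposition1.7}(1) gives that $\sqrt{aD}$ is a prime $t$-ideal, in particular a nonzero prime ideal, which is exactly what is required. This direction does not use the VFD hypothesis at all.

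For the reverse direction, suppose $a \in D$ is such that $\sqrt{aD}$ is a nonzero prime ideal. Since $\sqrt{aD}$ is a proper ideal (primes are proper) and nonzero, $a$ is a nonzero nonunit of $D$. Because $D$ is a VFD, we may write $a = \prod_{i=1}^n v_i$ for some valuation elements $v_i \in D$. Now apply Lemma~\ref{lemma1.10}: since $\sqrt{aD}$ is prime and $a$ is a finite product of valuation elements, $a$ itself is a valuation element.

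There is no real obstacle here: the substantive work has been done in Proposition~\ref{proposition1.7}(1) and Lemma~\ref{lemma1.10}, and the corollary is essentially just the combination of these two facts together with the defining property of a VFD. The only thing to be careful about is noting that $\sqrt{aD}$ being a nonzero proper prime ideal forces $a$ to be a nonzero nonunit, so that the VFD factorization hypothesis actually applies.
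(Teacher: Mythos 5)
Your proof is correct and follows exactly the paper's route: the paper also derives this corollary directly from Proposition~\ref{proposition1.7}(1) (forward direction) and Lemma~\ref{lemma1.10} combined with the VFD factorization (reverse direction). You have merely spelled out the details, including the correct observation that a nonzero proper prime radical forces $a$ to be a nonzero nonunit.
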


\begin{proof}
This is an immediate consequence of Proposition~\ref{proposition1.7}(1) and Lemma~\ref{lemma1.10}.
\end{proof}

Let $I$ be an ideal of an integral domain $D$. Let $\mathcal{P}(I)$ denote the set of minimal prime ideals of $I$.

\begin{lemma}\label{lemma1.12}
Let $D$ be a VFD and let $a\in D$ be a nonzero nonunit. Then
\begin{center}
$\min\{k\in\mathbb{N}\mid a$ is a product of $k$ valuation elements of $D\}=|\mathcal{P}(aD)|$.
\end{center}
\end{lemma}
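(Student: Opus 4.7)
The plan is to prove $\min = n$, where $n := |\mathcal{P}(aD)|$, by establishing both inequalities, with Lemma~\ref{lemma1.10} as the workhorse on the upper side. For $n \leq \min$: any decomposition $a = \prod_{j=1}^m w_j$ into valuation elements gives $\sqrt{aD} = \bigcap_{j=1}^m \sqrt{w_jD}$, with each $\sqrt{w_jD}$ a prime ideal by Proposition~\ref{proposition1.7}(1). Since a prime $P$ containing $\bigcap_j \sqrt{w_jD}$ must contain $\prod_j \sqrt{w_jD}$ and hence some $\sqrt{w_jD}$, every minimal prime of $aD$ equals some $\sqrt{w_jD}$. Thus $n \leq m$, yielding the lower bound.

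For $\min \leq n$, I will construct an explicit decomposition of $a$ with $n$ factors. Starting from any decomposition $a = \prod_{j=1}^m w_j$ into valuation elements, I first collapse by radical: for each $Q$ in $\mathcal{Q} := \{\sqrt{w_jD} : 1 \leq j \leq m\}$, let $v_Q := \prod_{j : \sqrt{w_jD}=Q} w_j$, which by Lemma~\ref{lemma1.10} is a valuation element with $\sqrt{v_QD} = Q$, so $a = \prod_{Q \in \mathcal{Q}} v_Q$. The argument just given also shows $\mathcal{P}(aD) \subseteq \mathcal{Q}$ and that every $Q \in \mathcal{Q}$ contains some minimal prime of $aD$; I therefore choose a map $\pi : \mathcal{Q} \to \mathcal{P}(aD)$ with $\pi(Q) \subseteq Q$ for each $Q$ and $\pi(P) = P$ for each $P \in \mathcal{P}(aD)$. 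Enumerating $\mathcal{P}(aD) = \{P_1, \ldots, P_n\}$ and setting $u_i := \prod_{Q : \pi(Q) = P_i} v_Q$, I compute $\sqrt{u_iD} = \bigcap_{Q : \pi(Q) = P_i} Q = P_i$ (since $P_i$ itself appears in the intersection and is contained in every other term). By Lemma~\ref{lemma1.10} again, each $u_i$ is a valuation element, so $a = \prod_{i=1}^n u_i$ is the desired decomposition.

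The only nontrivial input is the standard fact that a prime containing a finite intersection of primes contains one of them (via the product argument), used in both directions. The main thing to keep an eye on is that $\mathcal{Q}$ may be strictly larger than $\mathcal{P}(aD)$, i.e.\ some radicals $\sqrt{w_jD}$ may be non-minimal over $aD$; but such $Q$ are harmlessly absorbed into the $u_i$ corresponding to any chosen minimal prime below $Q$, and Lemma~\ref{lemma1.10} guarantees the resulting product is still a valuation element because the relevant intersection of primes collapses to the single prime $P_i$.
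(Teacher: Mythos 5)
Your proof is correct and is essentially the paper's argument in a slightly different packaging: both rely on the facts that every minimal prime of $aD$ occurs as some $\sqrt{w_jD}$ (giving $|\mathcal{P}(aD)|\leq m$) and that Lemma~\ref{lemma1.10} lets one merge all factors whose radicals contain a fixed $P\in\mathcal{P}(aD)$ into a single valuation element. The paper phrases the second step as a minimality argument ($|\Sigma_P|=1$ in a shortest factorization) rather than your explicit regrouped factorization $a=\prod_{i=1}^n u_i$, but the content is the same.
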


\begin{proof}
Let $n=\min\{k\in\mathbb{N}\mid a$ is a product of $k$ valuation elements of $D\}$. We have that $a=\prod_{i=1}^n a_i$ for some valuation elements $a_i$ of $D$. Let $P\in\mathcal{P}(aD)$. Set $\Sigma_P=\{i\in [1,n]\mid P\subseteq\sqrt{a_iD}\}$. Observe that $\sqrt{a_jD}=P$ for some $j\in\Sigma_P$. This implies that $\sqrt{\prod_{i\in\Sigma_P} a_iD}=\bigcap_{i\in\Sigma_P}\sqrt{a_iD}=P$, and thus $\prod_{i\in\Sigma_P} a_i$ is a valuation element of $D$ by Corollary~\ref{corollary1.11}. Since $n$ is minimal and $\Sigma_P$ is nonempty, we infer that $|\Sigma_P|=1$. Note that $[1,n]=\bigcup_{Q\in\mathcal{P}(aD)}\Sigma_Q$. Consequently, there is a bijection $\varphi:\mathcal{P}(aD)\rightarrow [1,n]$ such that $\Sigma_Q=\{\varphi(Q)\}$ for each $Q\in\mathcal{P}(aD)$.
\end{proof}

\begin{proposition}\label{proposition1.13}
Let $D$ be a VFD, let $n\in\mathbb{N}$, let $(a_i)_{i=1}^n$ be a sequence of valuation elements of $D$ and let $a=\prod_{i=1}^n a_i$. The following statements are equivalent.
\begin{enumerate}
\item $a_i$ and $a_j$ are incomparable for all distinct $i,j\in [1,n]$.
\item $\sqrt{a_iD}$ and $\sqrt{a_jD}$ are incomparable for all distinct $i,j\in [1,n]$.
\item A map $f:[1,n]\rightarrow\mathcal{P}(aD)$ given by $f(i)=\sqrt{a_iD}$ is a well-defined bijection.
\item $n=|\mathcal{P}(aD)|$.
\end{enumerate}
Hence, every nonzero nonunit of $D$ can be written as a finite product of incomparable valuation elements.
\end{proposition}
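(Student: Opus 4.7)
My plan is to traverse the implications in the order $(1)\Leftrightarrow(2)$, $(2)\Leftrightarrow(3)$, $(3)\Rightarrow(4)$, $(4)\Rightarrow(2)$, and then derive the ``Hence'' sentence from Lemma~\ref{lemma1.12} combined with the equivalence. Throughout, the main tool is the observation that $\sqrt{aD}=\bigcap_{i=1}^n\sqrt{a_iD}$, together with Corollary~\ref{corollary1.2}(1) (comparability of $\sqrt{\cdot}$ implies comparability of the principal ideals for valuation elements) and Lemma~\ref{lemma1.10} (a product of valuation elements whose radical is prime is itself a valuation element).

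For $(1)\Leftrightarrow(2)$, note that each $a_i$ is a valuation element, so if $\sqrt{a_iD}\subseteq\sqrt{a_jD}$ then Corollary~\ref{corollary1.2}(1) forces $a_iD$ and $a_jD$ to be comparable; the converse implication from comparability of principal ideals to comparability of their radicals is immediate. Thus (1) and (2) say the same thing.

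For $(2)\Leftrightarrow(3)$, I would use the general fact that any prime ideal containing $aD$ contains some $\sqrt{a_iD}$, so the minimal primes of $aD$ are precisely the minimal elements of the family $\{\sqrt{a_iD}\}_{i=1}^n$ under inclusion. If (2) holds, this family is an antichain of $n$ distinct prime $t$-ideals (prime $t$-ideals by Proposition~\ref{proposition1.7}(1)), all of which are therefore minimal over $aD$, giving a well-defined bijection $f$. Conversely, if $f$ is a well-defined bijection, the $\sqrt{a_iD}$ are distinct minimal primes of $aD$, hence pairwise incomparable, which is (2). The implication $(3)\Rightarrow(4)$ is just matching cardinalities.

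The main obstacle is $(4)\Rightarrow(2)$, which requires Lemma~\ref{lemma1.12} in an essential way. Assume $n=|\mathcal{P}(aD)|$. If some $\sqrt{a_iD}\subseteq\sqrt{a_jD}$ with $i\neq j$, then $\sqrt{a_ia_jD}=\sqrt{a_iD}\cap\sqrt{a_jD}=\sqrt{a_iD}$ is prime, so by Lemma~\ref{lemma1.10} the product $a_ia_j$ is a valuation element; merging this factor expresses $a$ as a product of $n-1$ valuation elements, contradicting the minimality statement in Lemma~\ref{lemma1.12}. Hence (2) holds. For the final ``Hence'' sentence, given a nonzero nonunit $a\in D$, apply Lemma~\ref{lemma1.12} to write $a$ as a product of $|\mathcal{P}(aD)|$ valuation elements, which by $(4)\Rightarrow(1)$ are pairwise incomparable, completing the proof.
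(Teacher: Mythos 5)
Your proof is correct and follows essentially the same route as the paper: Corollary~\ref{corollary1.2}(1) for the equivalence of (1) and (2), the observation that every minimal prime of $aD$ equals some $\sqrt{a_iD}$ for the equivalence with (3), and Lemma~\ref{lemma1.10} together with the minimality statement of Lemma~\ref{lemma1.12} to rule out comparable factors when $n=|\mathcal{P}(aD)|$. The only difference is a harmless reorganization of the cycle of implications (you close the loop via $(4)\Rightarrow(2)$ rather than $(4)\Rightarrow(1)$), so no further comment is needed.
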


\begin{proof}
(1) $\Rightarrow$ (2) This follows from Corollary~\ref{corollary1.2}(1).

(2) $\Rightarrow$ (3) Note that if $P\in\mathcal{P}(aD)$, then $a_i\in P$ for some $i\in [1,n]$, and hence $P=\sqrt{a_iD}$. Moreover, if $j\in [1,n]$, then $a\in\sqrt{a_jD}$, and thus $Q\subseteq\sqrt{a_jD}$ for some $Q\in\mathcal{P}(aD)$. As shown before, $Q=\sqrt{a_kD}$ for some $k\in [1,n]$. It follows that $k=j$, and hence $\sqrt{a_jD}=Q\in\mathcal{P}(aD)$. Thus, $f$ is a well-defined bijection.

(3) $\Rightarrow$ (4) This is obvious.

(4) $\Rightarrow$ (1) If there are distinct $i,j\in [1,n]$ such that $a_iD$ and $a_jD$ are comparable, then $a_ia_j$ is a valuation element, which contradicts Lemma~\ref{lemma1.12}.

\vspace{.12cm}
Moreover, by Lemma~\ref{lemma1.12} again, every nonzero nonunit of $D$ can be written as a finite product of incomparable valuation elements.
\end{proof}

Now let $D$ be a VFD. It is an easy consequence of Proposition~\ref{proposition1.13} that if $n,m\in\mathbb{N}$ and $(a_i)_{i=1}^n$ and $(b_j)_{j=1}^m$ are two sequences of incomparable valuation elements of $D$ with $\prod_{i=1}^n a_i=\prod_{j=1}^m b_j$, then $n=m$ and $\sqrt{a_iD}=\sqrt{b_iD}$ for each $i\in [1,n]$ by reordering if necessary.

\begin{corollary}\label{corollary1.14}
Let $D$ be a VFD and let $\Omega=\{\sqrt{xD}\mid x\in D\setminus\{0\},\sqrt{xD}\in {\rm Spec}(D)\}$.
\begin{enumerate}
\item The valuation elements of $D$ are precisely the nonzero nonunits $a\in D$ for which each two principal ideals of $D$ that contain $a$ are comparable.
\item If $a\in D$ is a valuation element and $P,Q\in\Omega$ are such that $a\in P\cap Q$, then $P$ and $Q$ are comparable.
\item $\Omega=\bigcup_{a\in D\setminus\{0\}}\mathcal{P}(aD)=\{\sqrt{xD}\mid x\in D$ is a valuation element$\}$.
\end{enumerate}
\end{corollary}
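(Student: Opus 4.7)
For part (1), one direction is precisely Corollary~\ref{corollary1.2}(2). For the converse, given a nonzero nonunit $a\in D$ with the stated comparability property, I would invoke Proposition~\ref{proposition1.13} to write $a=\prod_{i=1}^n a_i$ with $a_i$ pairwise incomparable valuation elements. Each $a_iD$ contains $a$, so by hypothesis they are pairwise comparable; together with their incomparability this forces $n=1$, so $a=a_1$ is a valuation element.

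For part (2), the plan is to route everything through a single valuation overring. Fix $V$ with $aV\cap D=aD$, and write $P=\sqrt{xD}$, $Q=\sqrt{yD}$. The assumption $a\in P\cap Q$ translates to $\sqrt{aD}\subseteq\sqrt{xD}$ and $\sqrt{aD}\subseteq\sqrt{yD}$, so Proposition~\ref{proposition1.1}(3) hands me $xV\cap D=xD$ and $yV\cap D=yD$ for the same $V$. Since $V$ is a valuation domain, $xV$ and $yV$ are comparable; intersecting the containment with $D$ and taking radicals produces the desired comparability of $P$ and $Q$.

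For part (3), the equality with $\{\sqrt{xD}\mid x\text{ is a valuation element}\}$ is immediate from Corollary~\ref{corollary1.11} once one notes that $\sqrt{xD}$ is automatically nonzero for $x\neq 0$. For the equality with $\bigcup_{a\in D\setminus\{0\}}\mathcal{P}(aD)$, the inclusion $\supseteq$ follows because for any nonzero nonunit $a$, Proposition~\ref{proposition1.13} identifies $\mathcal{P}(aD)$ with the set of radicals of the valuation-element factors, each of which is a prime ideal by Proposition~\ref{proposition1.7}(1) and hence lies in $\Omega$ (units contribute nothing, since $\mathcal{P}(D)=\emptyset$). For the reverse inclusion, I would use that any prime ideal equal to $\sqrt{xD}$ is itself a minimal prime over $xD$: it contains $xD$ and therefore contains some minimal prime $Q\in\mathcal{P}(xD)$, while $\sqrt{xD}=\bigcap\mathcal{P}(xD)\subseteq Q$ forces equality.

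The only step with genuine content is part (2); the key insight is that Proposition~\ref{proposition1.1}(3) lets the valuation overring chosen for $a$ simultaneously witness both $x$ and $y$, reducing the question about abstract radicals in $D$ to a routine comparison in $V$. The remainder of the corollary is bookkeeping that assembles Corollary~\ref{corollary1.11} and Proposition~\ref{proposition1.13} with elementary observations about radicals.
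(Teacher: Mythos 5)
Your proposal is correct and follows essentially the same route as the paper: part (1) via Corollary~\ref{corollary1.2}(2) and Proposition~\ref{proposition1.13}, part (3) via the same three inclusions (using Corollary~\ref{corollary1.11} and the observation that a prime radical $\sqrt{xD}$ is itself minimal over $xD$), and part (2) by reducing comparability of the radicals to comparability inside a valuation overring. The only (harmless) divergence is in (2), where you apply Proposition~\ref{proposition1.1}(3) directly to pull $x$ and $y$ into the single valuation overring witnessing $a$ and compare $xV$ with $yV$ there, whereas the paper first deduces $aD\subseteq pD\cap qD$ from Corollary~\ref{corollary1.2}(1) (after reducing to strict inclusions) and then cites part (1); both rest on the same mechanism, and your version sidesteps that case distinction.
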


\begin{proof}
(1) This is an easy consequence of Corollary~\ref{corollary1.2}(2) and Proposition~\ref{proposition1.13}.

(2) Let $a\in D$ be a valuation element and let $P,Q\in\Omega$ be such that $a\in P\cap Q$. Then $\sqrt{aD}\in\Omega$ and $\sqrt{aD}\subseteq P\cap Q$. Moreover $P=\sqrt{pD}$ and $Q=\sqrt{qD}$ for some $p,q\in D$. Without restriction let $\sqrt{aD}\subsetneq P$ and $\sqrt{aD}\subsetneq Q$. Therefore, $aD\subseteq pD\cap qD$ by Corollary~\ref{corollary1.2}(1), and thus $pD$ and $qD$ are comparable by (1). Consequently, $P$ and $Q$ are comparable.

(3) ($\subseteq$) First let $P\in\Omega$. There is some nonzero $x\in D$ such that $P=\sqrt{xD}$. Observe that $P\in\mathcal{P}(xD)$. ($\subseteq$) Next let $a\in D$ be a nonzero nonunit and let $Q\in\mathcal{P}(aD)$. Then $a\in yD\subseteq Q$ for some valuation element $y\in D$. It follows from Corollary~\ref{corollary1.11} that $\sqrt{yD}$ is a prime ideal of $D$, and hence $Q=\sqrt{yD}$. ($\subseteq$) Finally, let $z\in D$ be a valuation element of $D$. Set $A=\sqrt{zD}$. It follows from Corollary~\ref{corollary1.11} that $A\in\Omega$.
\end{proof}

\section{Schreier domains}

Let $D$ be an integral domain. Then $D$ is called a {\em pre-Schreier domain} if for all nonzero $x,y,z\in D$ with $x\mid_D yz$, there are some $a,b\in D$ such that $x=ab$, $a\mid_D y$ and $b\mid_D z$. Moreover, $D$ is called a {\em Schreier domain} if $D$ is an integrally closed pre-Schreier domain. Clearly, GCD-domains are Schreier domains. Schreier domains were introduced by Cohn \cite{co68}, and later, in \cite{z87}, Zafrullah introduced the notion of pre-Schreier domains.

(Pre-)Schreier domains are rather ``nice'' integral domains. Let $D[X]$ be the polynomial ring over $D$. Recall that a polynomial $f\in D[X]$ is called {\em primitive} if each common divisor of the coefficients of $f$ is a unit of $D$. We say that $D$ satisfies {\it Gau\ss' Lemma} if the product of each two primitive polynomials over $D$ is primitive. Clearly, UFDs satisfy Gau\ss' Lemma, and we use this fact to show that if $D$ is a UFD, then $D[X]$ is also a UFD. It is well known (cf. \cite[Propositions 3.2 and 3.3]{az07}) that every (pre-)Schreier domain satisfies Gau\ss' Lemma.

\begin{proposition}\label{proposition2.1}
A VFD is a Schreier domain.
\end{proposition}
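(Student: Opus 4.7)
The plan is to reduce to showing that every valuation element of $D$ is primal. Integral closedness is already provided by Corollary~\ref{corollary1.5}, so only the pre-Schreier property remains. Since any $D$-divisor of a valuation element $a$ (with valuation overring $V$ satisfying $aV\cap D=aD$) is again a valuation element for the same $V$ by Proposition~\ref{proposition1.1}(2), primality of each valuation element automatically upgrades to complete primality. Combined with Cohn's theorem that a finite product of completely primal elements is completely primal, and with Proposition~\ref{proposition1.13} writing every nonzero nonunit as a product of valuation elements, this will yield that every nonzero nonunit of $D$ is (completely) primal, i.e., $D$ is pre-Schreier.

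So let $a\in D$ be a valuation element with $aV\cap D=aD$, let $v$ denote the valuation of $V$, and suppose $a\mid_D yz$ for $y,z\in D\setminus\{0\}$; set $c:=yz/a\in D$. The key observation is that for any $D$-divisor $d$ of $a$ and any $w\in D$, we have $d\mid_D w$ if and only if $v(d)\leq v(w)$, since the inequality places $w$ in $dV\cap D=dD$ (using that $d$ is a valuation element for $V$). Hence finding a primal factorization $a=a_1a_2$ with $a_1\mid_D y$ and $a_2\mid_D z$ is equivalent to exhibiting a $D$-divisor $a_1$ of $a$ with $v(a_1)\in[v(a)-v(z),v(y)]$. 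The boundary cases $v(y)\geq v(a)$ or $v(z)\geq v(a)$ immediately give $y\in aD$ or $z\in aD$ and the trivial factorizations $a=a\cdot 1$ or $a=1\cdot a$.

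In the remaining case $v(y),v(z)<v(a)\leq v(y)+v(z)$, the target interval is nonempty, and the easy subcases are $c\mid_D y$ or $c\mid_D z$: for instance, if $c\mid_D y$, then $a_1:=y/c\in D$ and $a_2:=z$ satisfy $a_1a_2=yz/c=a$ with $a_1\mid_D y$ by construction and $a_2\mid_D z$ trivially. The main obstacle will be the situation in which $c$ divides neither $y$ nor $z$ in $D$; here the plan is to exploit the VFD hypothesis more seriously, by decomposing $c$ (and if necessary its companion $a^2/(yz)$) into incomparable valuation elements via Proposition~\ref{proposition1.13} and using the prime-ideal structure from Corollary~\ref{corollary1.14} to match the $v$-contributions of the factors to $y$ and $z$, thereby producing a $D$-divisor $a_1$ of $a$ whose valuation falls in the required interval. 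Once primality of each valuation element is established, Cohn's theorem combined with Proposition~\ref{proposition1.13} packages the argument and delivers that $D$ is pre-Schreier, and hence Schreier by Corollary~\ref{corollary1.5}.
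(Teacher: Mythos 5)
Your setup is sound as far as it goes: every $D$-divisor of a valuation element $a$ (with $aV\cap D=aD$) is again a valuation element for the same $V$ by Proposition~\ref{proposition1.1}(2), the equivalence ``$d\mid_D w$ if and only if $v(d)\le v(w)$'' is correct for such divisors $d$, the translation of primality of $a$ into finding a $D$-divisor $a_1$ of $a$ with $v(a_1)\in[v(a)-v(z),v(y)]$ is valid, and the final assembly via Cohn's theorem on products of (completely) primal elements is exactly how the paper finishes as well. The genuine gap is that in the main case --- where $c=yz/a$ divides neither $y$ nor $z$ in $D$ --- you only announce a plan (``decompose $c$ \dots and match the $v$-contributions'') without carrying it out, and that case is where all of the work lies. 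Moreover, the plan as stated faces a real obstruction: your key equivalence between divisibility in $D$ and an inequality of $v$-values applies only to elements $d$ satisfying $dV\cap D=dD$ for the \emph{fixed} valuation overring $V$, i.e.\ to $D$-divisors of $a$. The incomparable valuation factors of $c$ supplied by Proposition~\ref{proposition1.13} are valuation elements with respect to possibly different valuation overrings, so their $v$-values do not control divisibility in $D$, and it is not clear what ``matching their $v$-contributions to $y$ and $z$'' would even mean. (Also, the ``companion $a^2/(yz)=a/c$'' need not lie in $D$.)

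The paper closes precisely this case by a different device: an induction on $|\mathcal{P}(yD)|+|\mathcal{P}(zD)|$. Since $yz\in xD\subseteq\sqrt{xD}\in{\rm Spec}(D)$, one may assume $y\in\sqrt{xD}$; writing $y=\prod_{P\in\mathcal{P}(yD)}y_P$ as a product of incomparable valuation elements via Proposition~\ref{proposition1.13}, some factor satisfies $\sqrt{y_PD}\subseteq\sqrt{xD}$, so $y_PD$ and $xD$ are comparable by Corollary~\ref{corollary1.2}(1). Either $y_P\in xD$, in which case $x\mid_D y$ and one takes $a=x$, $b=1$; or $x=y_Pw$ with $w$ a valuation element (Proposition~\ref{proposition1.1}(2)) dividing $(y/y_P)z$, which is a strictly smaller instance of the induction. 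To complete your argument you would need to replace the vague matching step with a reduction of this kind; as written, the central case remains unproved.
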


\begin{proof}
Let $D$ be a VFD. It follows from Corollary~\ref{corollary1.5} that $D$ is integrally closed. Next we show by induction that for each $k\in\mathbb{N}$, for each valuation element $x\in D$ and for all nonzero $y,z\in D$ such that $x\mid_D yz$ and $|\mathcal{P}(yD)|+|\mathcal{P}(zD)|=k$, there are some $a,b\in D$ such that $x=ab$, $a\mid_D y$ and $b\mid_D z$.

Let $k\in\mathbb{N}$, let $x\in D$ be a valuation element and let $y,z\in D$ be nonzero such that $x\mid_D yz$ and $|\mathcal{P}(yD)|+|\mathcal{P}(zD)|=k$. Observe that $yz\in xD\subseteq\sqrt{xD}\in {\rm Spec}(D)$, and hence $y\in\sqrt{xD}$ or $z\in\sqrt{xD}$. Without restriction let $y\in\sqrt{xD}$. Note that $y=\prod_{P\in\mathcal{P}(yD)} y_P$ for some incomparable valuation elements $y_P\in D$ with $\sqrt{y_PD}=P$ for each $P\in\mathcal{P}(yD)$ by Proposition~\ref{proposition1.13}. Consequently, there is some $P\in\mathcal{P}(yD)$ such that $\sqrt{y_PD}\subseteq\sqrt{xD}$. It follows from Corollary~\ref{corollary1.2}(1) that $y_PD$ and $xD$ are comparable.

\vspace{.2cm}
{\noindent}{\textsc{Case 1}}: $y_PD\subseteq xD$. Then $y\in xD$. Set $a=x$ and $b=1$. Then $x=ab$, $a\mid_D y$ and $b\mid_D z$.

\vspace{.2cm}
{\noindent}{\textsc{Case 2}}: $xD\subsetneq y_PD$. There is some nonunit $w\in D$ such that $x=y_Pw$. We infer by Proposition~\ref{proposition1.1}(2) that $w$ is a valuation element of $D$. There is some $y^{\prime}\in D$ such that $y=y_Py^{\prime}$. Note that $\mathcal{P}(y^{\prime}D)=\mathcal{P}(yD)\setminus\{P\}$, and hence $|\mathcal{P}(y^{\prime}D)|+|\mathcal{P}(zD)|<k$. Moreover, $y_Pw=x\mid_D yz=y_Py^{\prime}z$, and thus $w\mid_D y^{\prime}z$. It follows by the induction hypothesis that $w=a^{\prime}b$, $a^{\prime}\mid_D y^{\prime}$ and $b\mid_D z$ for some $a^{\prime},b\in D$. Set $a=y_Pa^{\prime}$. Then $x=ab$ and $a\mid_D y$.

\vspace{.2cm}
We infer that for each valuation element $x\in D$ and for all nonzero $y,z\in D$ with $x\mid_D yz$ there are some $a,b\in D$ such that $x=ab$, $a\mid_D y$ and $b\mid_D z$. Now it is straightforward to show by induction that for each $n\in\mathbb{N}$, for each $x\in D$ which is a product of $n$ valuation elements of $D$ and for each nonzero $y,z\in D$ with $x\mid_D yz$, there are some $a,b\in D$ such that $x=ab$, $a\mid_D y$ and $b\mid_Dz$. This implies that $D$ is a Schreier domain.
\end{proof}

However, Schreier domains need not be VFDs. For example, it is known that a Pr\"ufer domain is a Schreier domain if and only if it is a B\'ezout domain \cite[Proposition 2]{dz11}, while a VFD that is a Pr\"ufer domain is an h-local Pr\"ufer domain (by Corollary~\ref{corollary3.7}). Hence, if $D=\mathbb{Z}_{\mathbb{Z}\setminus (2)\cup (3)}+X\mathbb{Q}[\![X]\!]$, then $D$ is a Schreier domain but not a VFD.

\begin{corollary}\label{corollary2.2}
Let $D$ be an integral domain. The following statements are equivalent.
\begin{enumerate}
\item $D$ is a VFD.
\item $D$ is a Schreier domain and every nonzero prime $t$-ideal of $D$ contains a valuation element of $D$.
\item $D$ is a pre-Schreier domain and every nonzero prime $t$-ideal of $D$ contains a valuation element of $D$.
\end{enumerate}
\end{corollary}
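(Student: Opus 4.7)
The plan is to prove the cycle $(1)\Rightarrow(2)\Rightarrow(3)\Rightarrow(1)$. For $(1)\Rightarrow(2)$, Proposition~\ref{proposition2.1} already gives that a VFD is Schreier, and if $P$ is any nonzero prime $t$-ideal, pick any nonzero (hence nonunit) $x\in P$, factor $x$ as a product of valuation elements, and use primality of $P$ to locate one of these factors inside $P$. The implication $(2)\Rightarrow(3)$ is immediate from the definitions.

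For $(3)\Rightarrow(1)$, I would first dispatch the special case in which $\sqrt{aD}$ is a prime ideal for the given nonzero nonunit $a\in D$. Since $aD$ is a finitely generated $t$-ideal and $\sqrt{aD}$ is minimal over $aD$, $\sqrt{aD}$ is a prime $t$-ideal, so hypothesis (3) supplies a valuation element $v\in\sqrt{aD}$. Then $v^k\in aD$ for some $k\in\mathbb{N}$, i.e., $a\mid_D v^k$. Iterated application of the pre-Schreier property to $a\mid_D v\cdot v^{k-1}$, and to the subfactors that appear, produces a factorization $a=a_1\cdots a_k$ with $a_i\mid_D v$ for every $i$. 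By Proposition~\ref{proposition1.1}(2) each nonunit $a_i$ is itself a valuation element (with the same valuation overring as $v$); after absorbing units, $a$ is a product of valuation elements. Since $\sqrt{aD}$ is prime, Lemma~\ref{lemma1.10} upgrades this to: $a$ is itself a valuation element.

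It remains to reduce the general case to this special one. If $\sqrt{aD}$ is not prime, pick $x,y\in D$ with $xy\in\sqrt{aD}$ but $x,y\notin\sqrt{aD}$; then $a\mid_D x^ny^n$ for some $n$, and pre-Schreier splits $a=\alpha\beta$ with $\alpha\mid_D x^n$ and $\beta\mid_D y^n$, both factors being nonzero nonunits (otherwise $a$ would divide a power of $x$ or of $y$). The main obstacle is arranging termination of this iterated splitting, so that every factor eventually has prime radical and the special case concludes. My plan for this is to establish, as an auxiliary fact under (3), that every nonzero nonunit $a\in D$ has $|\mathcal{P}(aD)|<\infty$; I would approach this by constructing, for each $P\in\mathcal{P}(aD)$, a valuation element dividing $a$ whose radical is exactly $P$ (take $u\in P$ a valuation element from (3), exploit $u\in PD_P=\sqrt{aD_P}$ to get $u^ks=au'$ for some $s\notin P$ and $u'\in D$, and apply pre-Schreier to $a\mid_D su^k$, picking off the factor that divides $u^k$ as a valuation-element divisor of $a$, and verifying its radical must coincide with $P$ by minimality). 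With finiteness in hand, one can induct on $|\mathcal{P}(aD)|$: in the splitting $a=\alpha\beta$, the sets $\mathcal{P}(\alpha D)\cap\mathcal{P}(aD)$ and $\mathcal{P}(\beta D)\cap\mathcal{P}(aD)$ are proper subsets of $\mathcal{P}(aD)$ by the choice of $x,y$. The careful bookkeeping required to turn this outline into a rigorous argument — both the finiteness claim and the tracking of minimal primes through the splittings — is what I expect to occupy the bulk of the proof.
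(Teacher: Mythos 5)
Your implications (1) $\Rightarrow$ (2) $\Rightarrow$ (3) are correct and match the paper, and your special case of (3) $\Rightarrow$ (1) (when $\sqrt{aD}$ is prime) is a sound use of the pre-Schreier property together with Proposition~\ref{proposition1.1}(2). The problem is the reduction of the general case, where there are two genuine gaps. First, the auxiliary finiteness claim $|\mathcal{P}(aD)|<\infty$ is not delivered by your sketch: producing, for each $P\in\mathcal{P}(aD)$, a valuation element dividing $a$ with radical exactly $P$ does not by itself bound the number of minimal primes, and you cannot invoke Lemma~\ref{lemma1.12}, which presupposes that $D$ is already a VFD; under hypothesis (3) alone you have no a priori reason why $a$ cannot be divisible by infinitely many pairwise incomparable valuation elements. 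Second, even granting finiteness, your induction measure does not decrease: from $a=\alpha\beta$ with $\alpha\mid_D x^n$ and $\beta\mid_D y^n$ you only obtain that $\mathcal{P}(\alpha D)\cap\mathcal{P}(aD)$ is a proper subset of $\mathcal{P}(aD)$, whereas applying the induction hypothesis to $\alpha$ requires $|\mathcal{P}(\alpha D)|<|\mathcal{P}(aD)|$. Since $D$ is not assumed to satisfy the Principal Ideal Theorem, a minimal prime of $\alpha D$ may strictly contain a minimal prime of $aD$ (namely one containing $\beta$ but not $\alpha$), so $\mathcal{P}(\alpha D)\not\subseteq\mathcal{P}(aD)$ in general and the count need not drop.

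The paper avoids all of this bookkeeping with a short saturation argument that is the key idea missing from your proposal: let $\Sigma$ be the set of all finite products of units and valuation elements of $D$. By the pre-Schreier property and Proposition~\ref{proposition1.1}(2), $\Sigma$ is a divisor-closed (saturated) multiplicatively closed set. If some nonzero $z\notin\Sigma$, then $zD\cap\Sigma=\emptyset$, and since $zD$ is a $t$-ideal, there is a prime $t$-ideal $P\supseteq zD$ with $P\cap\Sigma=\emptyset$ (an ideal maximal among $t$-ideals disjoint from a multiplicative set is prime). But by hypothesis (3), $P$ contains a valuation element and hence meets $\Sigma$, a contradiction. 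This settles (3) $\Rightarrow$ (1) in a few lines, with no need for finiteness of $\mathcal{P}(aD)$ or any induction.
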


\begin{proof}
(1) $\Rightarrow$ (2) It follows from Proposition~\ref{proposition2.1} that $D$ is a Schreier domain. It is obvious that every nonzero prime $t$-ideal of $D$ contains a valuation element of $D$.

(2) $\Rightarrow$ (3) This is obvious.

(3) $\Rightarrow$ (1) Let $\Sigma$ be the set of finite products of units and valuation elements of $D$. Observe that $\Sigma$ is a multiplicatively closed subset of $D$. Next we show that $\Sigma$ is divisor-closed. Let $x\in D$ be such that $x\mid_D y$ for some $y\in\Sigma$. There are some $n\in\mathbb{N}$ and some elements $y_i\in D$ for each $i\in [1,n]$ which are either units or valuation elements of $D$ such that $y=\prod_{i=1}^n y_i$. Therefore, $x=\prod_{i=1}^n x_i$ for some elements $x_i\in D$ such that $x_i\mid_D y_i$ for each $i\in [1,n]$. It follows from Proposition~\ref{proposition1.1}(2) that $x_i$ is a unit or a valuation element of $D$ for each $i\in [1,n]$. Therefore, $x\in\Sigma$.

It is sufficient to show that $D\setminus\{0\}\subseteq\Sigma$. Assume that there is some $z\in D\setminus (\Sigma\cup\{0\})$. Then $zD\cap\Sigma=\emptyset$, because $\Sigma$ is divisor-closed by the previous paragraph. Consequently, there is some prime $t$-ideal $P$ of $D$ such that $zD\subseteq P$ and $P\cap\Sigma=\emptyset$. On the other hand, $P$ contains a valuation element of $D$, and hence $P\cap\Sigma\not=\emptyset$, a contradiction.
\end{proof}

Let $I$ be a $t$-ideal of an integral domain $D$. Then $I$ is said to be {\it $t$-finite} if $I=J_t$ for some $J\in f(D)$. It is known that $I$ is $t$-invertible if and only if $I$ is $t$-finite and $I_P$ is principal for all $P\in t$-Max$(D)$ \cite[Corollary 2.7]{k89}.

\begin{corollary}\label{corollary2.3}
Let $D$ be a VFD.
\begin{enumerate}
\item ${\rm Cl}_t(D)=\{0\}$.
\item Every atom of $D$ is a prime element.
\item If $D$ is a $t$-finite conductor domain, i.e., the intersection of each two principal ideals of $D$ is $t$-finite,
 then $D$ is a GCD-domain.
\end{enumerate}
\end{corollary}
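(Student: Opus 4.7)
Parts (1) and (2) follow quickly from the Schreier property supplied by Proposition~\ref{proposition2.1}. For (1), the approach is to invoke the classical theorem (due in essence to Cohn and refined by Zafrullah) that every $t$-invertible $t$-ideal of a (pre-)Schreier domain is principal; this immediately yields ${\rm Cl}_{t}(D)=\{0\}$. An in-paper reproof would take a $t$-invertible $t$-ideal $I=(a_{1},\ldots,a_{n})_{t}$ with $(II^{-1})_{t}=D$, use a B\'ezout-style relation $\sum_{i}a_{i}b_{i}=1$ with $a_{i}\in I$ and $b_{i}\in I^{-1}$ arising from the $t$-invertibility, and iteratively apply the pre-Schreier decomposition to extract a common principal generator of $I$. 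For (2), if $p\in D$ is an atom and $p\mid_{D}bc$ with $b,c\in D\setminus\{0\}$, pre-Schreier gives $p=p_{1}p_{2}$ with $p_{1}\mid_{D}b$ and $p_{2}\mid_{D}c$; since $p$ is an atom, some $p_{i}$ is a unit, so $p\mid_{D}b$ or $p\mid_{D}c$, and hence $p$ is prime.

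For (3), the plan is to show that $I:=aD\cap bD$ is principal for every $a,b\in D\setminus\{0\}$; in the Schreier domain $D$, the existence of $\mathrm{lcm}(a,b)$ forces the existence of $\gcd(a,b)=ab/\mathrm{lcm}(a,b)$, showing that $D$ is a GCD-domain. The hypothesis provides $I$ as a $t$-finite $t$-ideal, and a direct computation from the definition of the inverse ideal yields the identity $I^{-1}=\frac{1}{ab}(a,b)_{v}$; consequently $I$ is $t$-invertible if and only if $(a,b)$ is $t$-invertible. Granted such $t$-invertibility, part (1) forces $I$ to be principal. To establish $t$-invertibility, I would localize at each maximal $t$-ideal $P$ of $D$ and show that $aD_{P}$ and $bD_{P}$ are comparable inside the Schreier VFD $D_{P}$ (Corollary~\ref{corollary1.8}(1)), exploiting the fact that the $t$-finite conductor hypothesis transfers to $D_{P}$.

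The main obstacle lies in this local comparability step. The Schreier property alone does not force $D_{P}$ to be a valuation domain (for instance, $k[\![X,Y]\!]$ is local and Schreier but not valuation), so one must genuinely coordinate the pre-Schreier decomposition of $ab$ into incomparable valuation elements of $D_{P}$ (via Proposition~\ref{proposition1.13}) with the $v$-finiteness of $I_{P}=aD_{P}\cap bD_{P}$ in order to exhibit a single generator of $I_{P}$. This step is essentially Zafrullah's theorem that a Schreier $v$-coherent (equivalently, $t$-finite conductor) domain is a GCD-domain, which may alternatively be cited directly rather than reproved in place.
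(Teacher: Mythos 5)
Your proposal takes essentially the same route as the paper, whose entire proof is the two-step reduction you lead with: Proposition~\ref{proposition2.1} makes $D$ a Schreier domain, and all three assertions are then quoted from Dumitrescu and Zafrullah's results on (pre-)Schreier domains (the paper cites \cite[Proposition 2 and Corollary 6]{dz11}), so your citation-based argument for (1) and (3) and your explicit atom argument for (2) are all fine. One caveat on your optional ``in-paper reproof'' of (1): $t$-invertibility of $I$ gives only $(II^{-1})_v=D$, not an honest relation $\sum_i a_ib_i=1$ with $a_i\in I$, $b_i\in I^{-1}$ (that would be ordinary invertibility), so the B\'ezout-style extraction as sketched would not get off the ground and the cited theorem is the right thing to lean on.
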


\begin{proof}
This follows directly from Proposition~\ref{proposition2.1} and \cite[Proposition 2 and Corollary 6]{dz11}.
\end{proof}

A {\it Mori domain} is an integral domain which satisfies the ascending chain condition on integral $t$-ideals, equivalently, each $t$-ideal is of finite type. Mori domains include Noetherian domains, Krull domains and UFDs.

\begin{corollary}\label{corollary2.4}
Let $D$ be a VFD. The following statements are equivalent.
\begin{enumerate}
\item $D$ is atomic.
\item $D$ is a Mori domain.
\item $D$ is a Krull domain.
\item $D$ is a UFD.
\end{enumerate}
\end{corollary}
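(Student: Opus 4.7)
The plan is to close the cycle $(1) \Rightarrow (4) \Rightarrow (3) \Rightarrow (2) \Rightarrow (1)$, where three of the four implications are essentially built into the definitions of the classes involved and the heart of the corollary is the single implication $(1) \Rightarrow (4)$. That one is forced by what the paper has already proved about atoms in a VFD.

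For $(1) \Rightarrow (4)$, I would argue as follows. Suppose $D$ is an atomic VFD, and let $a \in D$ be a nonzero nonunit. By atomicity, $a = u_1 \cdots u_n$ for some atoms $u_i \in D$. By Corollary~\ref{corollary2.3}(2), every atom of $D$ is already a prime element, so each $u_i$ is prime, and thus $a$ is a finite product of prime elements. A domain in which every nonzero nonunit factors into primes is a UFD (this is the standard Kaplansky-type characterization), and so $D$ is a UFD.

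The remaining implications are the textbook facts. For $(4) \Rightarrow (3)$, any UFD is a Krull domain (it is the intersection of the localizations at its height-one primes, each of which is a DVR, and the intersection is locally finite). For $(3) \Rightarrow (2)$, a Krull domain satisfies the ascending chain condition on divisorial integral ideals, hence on $t$-ideals, so it is Mori. For $(2) \Rightarrow (1)$, in a Mori domain the ACC on integral $t$-ideals in particular gives the ACC on principal ideals (each principal ideal is a $t$-ideal), and this suffices to force atomicity. There is no serious obstacle here; once Corollary~\ref{corollary2.3}(2) is invoked, the corollary reduces to packaging four standard equivalences.
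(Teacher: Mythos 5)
Your proposal is correct and follows exactly the same route as the paper: the cycle is closed through the textbook implications UFD $\Rightarrow$ Krull $\Rightarrow$ Mori $\Rightarrow$ ACCP $\Rightarrow$ atomic, and the only substantive step, atomic $\Rightarrow$ UFD, is obtained by invoking Corollary~\ref{corollary2.3}(2) to upgrade atoms to primes. Nothing to add.
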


\begin{proof}
Clearly, every UFD is a Krull domain and every Krull domain is a Mori domain. By definition, a Mori domain satisfies the ascending chain condition on principal ideals and is, therefore, atomic. If $D$ is atomic, then every nonzero nonunit of $D$ is a finite product of prime elements by Corollary~\ref{corollary2.3}(2), and thus $D$ is a UFD.
\end{proof}

Observe that if $D$ is a Krull domain which is not a UFD (e.g., $D=\mathbb{Z}[\sqrt{-5}]$), then $D$ and $D[X]$ are both examples of Krull domains that fail to be VFDs. Furthermore, if $V$ is a nondiscrete valuation domain (e.g., $V=\overline{\mathbb{Z}}_M$, where $\overline{\mathbb{Z}}$ is the ring of algebraic integers and $M$ is a maximal ideal of $\overline{\mathbb{Z}}$), then $V$ is a VFD and yet $V$ is not atomic. We next study when $D[X]$ is a VFD.

\begin{lemma}\label{lemma2.5}
Let $D[X]$ be the polynomial ring over an integral domain $D$ and let $a\in D$ be a nonzero nonunit. Then $a$ is a valuation element of $D$ if and only if $a$ is a valuation element of $D[X]$.
\end{lemma}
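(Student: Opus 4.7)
The plan is to prove each direction by an explicit construction of the desired valuation overring.

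For the forward direction, suppose $a$ is a valuation element of $D$, so that $aV\cap D=aD$ for some valuation overring $V$ of $D$. I would use the Nagata-type Gauss extension $V(X)=\{f/g\mid f,g\in V[X],\ c(g)=V\}$, which is a well-known valuation overring of $V[X]$ (hence of $D[X]$) inside the quotient field $K(X)$ of $D[X]$, where $K$ is the quotient field of $D$. To verify $aV(X)\cap D[X]=aD[X]$, take $f\in aV(X)\cap D[X]$, write $f=a(g/h)$ with $g,h\in V[X]$ and $c(h)=V$, and clear denominators to get $hf=ag$ in $V[X]$. Apply Gauss's lemma for the valuation domain $V$ (the content of a product equals the product of the contents) to conclude $c(f)=c(h)c(f)=c(hf)=c(ag)=ac(g)\subseteq aV$. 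Hence every coefficient of $f$ lies in $aV\cap D=aD$, so $f\in aD[X]$. The reverse containment is trivial.

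For the converse, assume $a$ is a valuation element of $D[X]$ witnessed by a valuation overring $W$ of $D[X]$ with $aW\cap D[X]=aD[X]$. The natural candidate is $V=W\cap K$. Since $D\subseteq D[X]\subseteq W$ and $D\subseteq K$, the ring $V$ is an overring of $D$, and it is a valuation domain of $K$ because for any nonzero $x\in K$, one of $x,x^{-1}$ lies in $W$ and hence in $V$. To check $aV\cap D=aD$, let $d\in aV\cap D$. Then $d\in aW\cap D\subseteq aW\cap D[X]=aD[X]$, so $d=ae$ for some $e\in D[X]$; since $d/a\in K$, the element $e$ lies in $K\cap D[X]=D$, giving $d\in aD$.

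The only nontrivial ingredients are the facts that $V(X)$ is a valuation domain and that Gauss's lemma (content-multiplicativity) holds over a valuation domain; both are standard and will only need a citation. I do not anticipate any serious obstacle, the argument being essentially a clean two-sided use of the Gauss extension and contraction to $K$.
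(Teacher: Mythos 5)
Your proposal is correct and follows essentially the same route as the paper: for ($\Rightarrow$) both use the Nagata extension $V(X)$ (your description $\{f/g\mid c(g)=V\}$ coincides with the paper's $V[X]_{M[X]}$, and your explicit appeal to content multiplicativity is just an unpacking of the paper's cited fact $V(X)\cap K[X]=V[X]$), and for ($\Leftarrow$) both contract the witnessing valuation overring $W$ to $V=W\cap K$ and compute $aV\cap D=aD$ in the same way.
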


\begin{proof}
Let $K$ be the quotient field of $D$.

$(\Rightarrow)$ By assumption, $aV\cap D=aD$ for some valuation overring $V$ of $D$. Note that if $M$ is a maximal ideal of $V$,
then $V(X):=V[X]_{M[X]}$ is a valuation overring of $D[X]$ and $V(X)\cap K[X]=V[X]$; hence if $u\in aV(X)\cap D[X]$, then $u=af$ for some $f\in V[X]$. Hence, $aV\cap D=aD$ implies $af\in aD[X]$, and thus $f\in D[X]$. Therefore, $aV(X)\cap D[X]=aD[X]$.

$(\Leftarrow)$ Let $W$ be a valuation overring of $D[X]$ such that $aW\cap D[X]=aD[X]$ and let $V=W\cap K$. Then $V$ is a valuation overring of $D$ and
\begin{eqnarray*}
aD &=& aD[X]\cap K=aW\cap D[X]\cap K\\
&=& (aW\cap aK)\cap (D[X]\cap K)
=a(W\cap K)\cap D\\ &=& aV\cap D.
\end{eqnarray*}
Thus, $a$ is a valuation element of $D$.
\end{proof}

Let $D[X]$ be the polynomial ring over $D$. For $f\in D[X]$, let $c(f)$ denote the ideal of $D$ generated by the coefficients of $f$. A nonzero prime ideal $Q$ of $D[X]$ is called an {\em upper to zero} in $D[X]$ if $Q\cap D=(0)$. Following \cite{hz89}, we say that $D$ is a {\em UMT-domain} if each upper to zero in $D[X]$ is a maximal $t$-ideal. Then $D$ is an integrally closed UMT-domain if and only if $D$ is a P$v$MD \cite[Proposition 3.2]{hz89}.

\begin{proposition}\label{proposition2.6}
Let $D[X]$ be the polynomial ring over an integral domain $D$. Then $D[X]$ is a VFD if and only if $D$ is a VFD and every upper to zero in $D[X]$ contains a valuation element of $D[X]$.
\end{proposition}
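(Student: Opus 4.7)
The strategy for both directions is to combine Lemma~\ref{lemma2.5} (which identifies the valuation elements of $D$ lying in $D$ with the valuation elements of $D[X]$ lying in $D$) together with Corollary~\ref{corollary2.2} (which characterises VFDs as pre-Schreier domains in which every nonzero prime $t$-ideal contains a valuation element).

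For the forward implication, assume $D[X]$ is a VFD. Given an upper to zero $Q$ in $D[X]$, choose any nonzero $f\in Q$ and factor $f$ as a product of valuation elements of $D[X]$; since $Q$ is prime, at least one of these factors lies in $Q$, and it is the required valuation element. To see that $D$ itself is a VFD, take any nonzero nonunit $a\in D$ and factor $a=f_1\cdots f_n$ in $D[X]$ with each $f_i$ a valuation element of $D[X]$. Because $D[X]$ is a domain, degrees are additive, so $\sum_i \deg(f_i)=\deg(a)=0$ forces $f_i\in D$ for every $i$. Each $f_i$ is then a nonzero nonunit of $D$ (since the units of $D[X]$ are exactly the units of $D$), and Lemma~\ref{lemma2.5} promotes it to a valuation element of $D$, giving the desired factorisation of $a$ in $D$.

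For the reverse implication, assume $D$ is a VFD and every upper to zero in $D[X]$ contains a valuation element of $D[X]$. By Proposition~\ref{proposition2.1}, $D$ is Schreier, and by the classical theorem of Cohn that the polynomial extension of a Schreier domain is Schreier, $D[X]$ is Schreier as well. Now let $P$ be a nonzero prime $t$-ideal of $D[X]$. If $P\cap D=(0)$, then $P$ is an upper to zero and by hypothesis contains a valuation element of $D[X]$. Otherwise $P\cap D$ is a nonzero proper prime of $D$; pick any nonzero nonunit $a\in P\cap D$, factor it into valuation elements of $D$, and use primality of $P\cap D$ to find a factor $b\in P\cap D\subseteq P$. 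By Lemma~\ref{lemma2.5}, $b$ is a valuation element of $D[X]$ lying in $P$. Applying Corollary~\ref{corollary2.2} to $D[X]$ then yields that $D[X]$ is a VFD.

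The only nontrivial external input is Cohn's result that Schreier-ness ascends to polynomial rings; everything else is bookkeeping with the tools already developed in Sections~1 and 2, so I expect the Schreier ascent to be the sole step that cannot be written out in a single line.
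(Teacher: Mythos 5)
Your proof is correct and follows essentially the same route as the paper: Lemma~\ref{lemma2.5} to transfer valuation elements between $D$ and $D[X]$ in the forward direction, and in the reverse direction Cohn's ascent of the Schreier property to $D[X]$ combined with the prime $t$-ideal criterion of Corollary~\ref{corollary2.2}, splitting into the cases $P\cap D=(0)$ and $P\cap D\neq(0)$. The only difference is that you spell out details (the degree argument, the use of primality to locate a valuation element factor) that the paper dismisses as clear.
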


\begin{proof}
$(\Rightarrow)$ Let $D[X]$ be a VFD. Let $a\in D$ be a nonzero nonunit. Then $a$ is a nonzero nonunit of $D[X]$, and hence $a$ is a finite product of valuation elements of $D[X]$. Clearly, each of these valuation elements is contained in $D$, and thus $a$ is a finite product of valuation elements of $D$ by Lemma~\ref{lemma2.5}. Therefore, $D$ is a VFD. It is clear that every upper to zero in $D[X]$ contains a valuation element of $D[X]$.

$(\Leftarrow)$ Let $D$ be a VFD and let every upper to zero in $D[X]$ contain a valuation element of $D[X]$. Then $D$ is a Schreier domain by Proposition~\ref{proposition2.1}. It follows from \cite[Theorem 2.7]{co68} (or from \cite[Theorem 4.8]{az07}) that $D[X]$ is a Schreier domain. Let $P$ be a nonzero prime $t$-ideal of $D[X]$. If $P\cap D=(0)$, then $P$ contains a valuation element of $D[X]$ by assumption. Now let $P\cap D\not=(0)$. It is clear that $P\cap D$ contains a valuation element of $D$. It follows from Lemma~\ref{lemma2.5} that $P$ contains a valuation element of $D[X]$. Consequently, $D[X]$ is a VFD by Corollary~\ref{corollary2.2}.
\end{proof}

\begin{corollary}\label{corollary2.7}
Let $D[X]$ be the polynomial ring over a UMT-domain $D$. Then $D$ is a VFD if and only if $D[X]$ is a VFD.
\end{corollary}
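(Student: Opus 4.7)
The plan is to reduce the problem to the single task identified by Proposition~\ref{proposition2.6}---namely, showing that each upper to zero of $D[X]$ contains a valuation element---and then to produce such an element as a principal generator of the upper to zero itself. The direction ``$D[X]$ is a VFD $\Rightarrow$ $D$ is a VFD'' falls immediately out of Proposition~\ref{proposition2.6}, so assume for the converse that $D$ is a UMT-domain and a VFD; Proposition~\ref{proposition2.6} then reduces the task to showing that every upper to zero $Q$ in $D[X]$ contains a valuation element of $D[X]$.

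To force $Q$ to be principal, I would first invoke the UMT hypothesis: $Q$ is a maximal $t$-ideal of $D[X]$, and the classical Houston--Zafrullah theory further guarantees that $Q$ is $t$-invertible (the $t$-invertibility of every upper to zero is a well-known equivalent formulation of the UMT property). On the other side, Proposition~\ref{proposition2.1} gives that $D$ is Schreier; Cohn's theorem (already invoked in the proof of Proposition~\ref{proposition2.6}) transfers the Schreier property to $D[X]$, and then the argument of Corollary~\ref{corollary2.3} delivers $\mathrm{Cl}_t(D[X])=\{0\}$. Since a $t$-invertible $t$-ideal in a domain with trivial $t$-class group is principal, we obtain $Q=fD[X]$ for some $f\in D[X]$.

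It then remains to verify that $f$ is itself a valuation element of $D[X]$, which I would do by taking $V:=D[X]_Q$ as the witnessing valuation overring. Because $Q\cap D=(0)$, standard localization identifies $V$ with $K[X]_{QK[X]}$, where $K$ is the quotient field of $D$; as $K[X]$ is a PID, $V$ is a DVR, hence a valuation overring of $D[X]$. Now $fV=fD[X]_Q=QD[X]_Q$ is the maximal ideal of $V$, and its contraction to $D[X]$ is $Q=fD[X]$, so $fV\cap D[X]=fD[X]$ and $f$ is a valuation element lying in $Q$; Proposition~\ref{proposition2.6} then finishes the proof.

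The main obstacle will be the external input in the second paragraph: the $t$-invertibility of every upper to zero in a UMT-domain is not stated in the excerpt and must be quoted from the Houston--Zafrullah literature (or re-derived here, e.g., by extracting from each such $Q$ a polynomial $f$ with $c(f)_v=D$). Everything else is a routine assembly of Propositions~\ref{proposition2.1} and~\ref{proposition2.6}, Corollary~\ref{corollary2.3}, Cohn's transfer of the Schreier property from $D$ to $D[X]$, and the elementary verification that $D[X]_Q$ is a DVR.
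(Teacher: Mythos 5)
Your proof is correct, and it reaches the key intermediate fact---that each upper to zero $Q$ in $D[X]$ is principal, generated by a prime (hence valuation) element---by a genuinely different route from the paper's. The paper works downstairs in $D$: a VFD is integrally closed (Corollary~\ref{corollary1.5}) and has trivial $t$-class group (Corollary~\ref{corollary2.3}), so the UMT hypothesis makes $D$ an integrally closed UMT-domain, hence a P$v$MD, hence a GCD-domain, and the standard Gauss-lemma fact that uppers to zero over a GCD-domain are generated by prime elements of $D[X]$ finishes the argument. You instead work upstairs in $D[X]$: you quote the Houston--Zafrullah theorem that an upper to zero is a maximal $t$-ideal if and only if it is $t$-invertible (this is in the paper's own reference \cite{hz89}, so the external input you flag is legitimate and already in the bibliography), transfer the Schreier property from $D$ to $D[X]$ via Cohn to get ${\rm Cl}_t(D[X])=\{0\}$, and conclude that $Q$ is principal; your direct verification that the generator $f$ is a valuation element via the DVR $D[X]_Q=K[X]_{QK[X]}$ is also correct, though you could shortcut it by noting that $f$ is a prime element and the introduction records that prime elements are valuation elements. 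The trade-off: the paper's route exploits the structure of $D$ (GCD-domain plus upper-to-zero folklore), while yours uses only the Schreier property of $D$ together with the $t$-invertibility characterization of the UMT condition; each approach rests on exactly one ``well-known'' external fact not proved in the paper.
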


\begin{proof}
By Proposition~\ref{proposition2.6} it is sufficient to show that if $D$ is a VFD, then every upper to zero in $D[X]$ contains a valuation element of $D[X]$. Let $D$ be a VFD. Then $D$ is integrally closed by Corollary~\ref{corollary1.5}, and hence $D$ is a P$v$MD because $D$ is a UMT-domain. Hence, $D$ is a GCD-domain by Corollary~\ref{corollary2.3}(1). Let $P$ be an upper to zero in $D[X]$. Since $D$ is a GCD-domain, it follows that $P=fD[X]$ for some prime element $f\in D[X]$. Since every prime element of $D[X]$ is a valuation element, it follows that $P$ contains a valuation element of $D[X]$.
\end{proof}

\section{VFDs which are HoFDs}

An integral domain $D$ is called a {\em weakly Matlis domain} if (i) $D$ is of finite $t$-character and (ii) $D$ is independent, i.e., no two distinct maximal $t$-ideals of $D$ contain a common nonzero prime ideal. It is easy to see that if $D$ is not a field, then $D$ is a weakly factorial GCD-domain if and only if $D$ is a weakly Matlis GCD-domain with $t$-dim$(D)=1$.

Let $D$ be an integral domain. We say that $a,b\in D$ are {\rm $t$-comaximal} if $(a,b)_v=D$. Two elements $a,b\in D$ are said to be {\it coprime} if for each $c\in D$ with $aD\cup bD\subseteq cD$, it follows that $c$ is a unit of $D$. Hence, if $a,b \in D$ are $t$-comaximal, then $a,b$ are coprime. Note that if $a,b\in D$ are two homogeneous elements that are not $t$-comaximal, then $ab$ is also a homogeneous element of $D$. Thus, every nonzero nonunit of an HoFD can be written as a finite product of $t$-comaximal homogeneous elements. It is known that $D$ is an HoFD if and only if $D$ is a weakly Matlis domain with trivial $t$-class group \cite[Theorem 2.2]{c19}. We first study when VFDs are HoFDs.

\begin{proposition}\label{proposition3.1}
Let $D$ be a VFD. The following statements are equivalent.
\begin{enumerate}
\item $D$ is an HoFD.
\item $D$ is a weakly Matlis domain.
\item Every nonzero prime $t$-ideal of $D$ is contained in a unique maximal $t$-ideal.
\item Every valuation element of $D$ is homogeneous.
\item $D$ is of finite $t$-character.
\item Every valuation element of $D$ is contained in only finitely many maximal $t$-ideals.
\end{enumerate}
If every maximal $t$-ideal of $D$ is the radical of a principal ideal, then these equivalent conditions are satisfied.
\end{proposition}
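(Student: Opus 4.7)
My plan is to establish the equivalences via a cycle $(1)\Leftrightarrow(2)\Rightarrow(3)\Rightarrow(4)\Rightarrow(1)$ together with $(4)\Rightarrow(5)\Rightarrow(6)\Rightarrow(4)$, after which all six conditions collapse into one equivalence class, and then verify the concluding sufficient condition separately.

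The equivalence $(1)\Leftrightarrow(2)$ follows at once by combining Corollary~\ref{corollary2.3}(1) (which gives $\mathrm{Cl}_t(D)=\{0\}$) with \cite[Theorem 2.2]{c19} (characterizing HoFDs as weakly Matlis domains with trivial $t$-class group). Both $(2)\Rightarrow(3)$ and $(2)\Rightarrow(5)$ are immediate from the definition of weakly Matlis (independence and finite $t$-character). For $(3)\Rightarrow(4)$, let $a$ be a valuation element; then $\sqrt{aD}$ is a nonzero prime $t$-ideal by Proposition~\ref{proposition1.7}(1), so (3) puts it into a unique maximal $t$-ideal $M$, and any maximal $t$-ideal containing $a$ must contain $\sqrt{aD}$ and hence equal $M$. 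For $(4)\Rightarrow(1)$, any VFD factorization of a nonzero nonunit into valuation elements is, by (4), a factorization into homogeneous elements, so $D$ is an HoFD.

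The $t$-character chain is handled by analogous factorization arguments. $(4)\Rightarrow(5)$: if $x=\prod_{i=1}^n a_i$ is a product of valuation elements, each $a_i$ lies in a unique maximal $t$-ideal $M_i$, and any maximal $t$-ideal containing $x$ contains some $a_i$ by primality and thus equals $M_i$. $(5)\Rightarrow(6)$ is trivial, and $(6)\Rightarrow(5)$ uses the same kind of VFD factorization, noting that the set of maximal $t$-ideals containing $\prod a_i$ is the finite union of the finite sets of maximal $t$-ideals containing each $a_i$. The nontrivial closing step is $(6)\Rightarrow(4)$: assume a valuation element $a$ lies in distinct maximal $t$-ideals $M_1,\dots,M_n$ with $n\ge 2$ (finite by (6)); applying prime avoidance to the pairwise incomparable $M_i$ and then extracting valuation factors from the VFD decomposition yields valuation elements $d_i\in M_i\setminus\bigcup_{j\ne i}M_j$. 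By Proposition~\ref{proposition1.7}(1) the $\sqrt{d_iD}$ belong to $\Omega$, and one checks that the $\sqrt{d_iD}$ are pairwise incomparable and incomparable with $\sqrt{aD}$ (otherwise a containment of radicals together with primality would force some $d_i$ into the wrong $M_j$); then Proposition~\ref{proposition1.13}, Lemma~\ref{lemma1.12}, and Corollary~\ref{corollary1.14}(2) should combine to yield a contradiction.

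The main obstacle is precisely this last step: Corollary~\ref{corollary1.14}(2) controls pairs of members of $\Omega$ containing a given valuation element, but the maximal $t$-ideals $M_i$ themselves need not lie in $\Omega$ in the general case, so one must work indirectly through the $\sqrt{d_iD}$'s and carefully chosen products to reach the contradiction. Under the extra hypothesis of the final sentence---every maximal $t$-ideal equals $\sqrt{bD}$ for some $b\in D$---this difficulty disappears: every maximal $t$-ideal sits in $\Omega$, and Corollary~\ref{corollary1.14}(2) applied to any valuation element $a$ and two maximal $t$-ideals $M_1,M_2$ containing $a$ forces $M_1,M_2$ to be comparable and hence equal by maximality, directly establishing (4) and therefore all six equivalent conditions.
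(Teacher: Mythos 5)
Most of your cycle matches the paper's argument and is correct: $(1)\Rightarrow(2)$ via \cite[Theorem 2.2]{c19}, $(2)\Rightarrow(3)\Rightarrow(4)\Rightarrow(1)$, $(2)\Rightarrow(5)\Rightarrow(6)$, and the closing remark about maximal $t$-ideals that are radicals of principal ideals via Corollary~\ref{corollary1.14}(2) are all fine (your extra direct derivation of $(2)\Rightarrow(1)$ from ${\rm Cl}_t(D)=\{0\}$ is a harmless shortcut, and your $(4)\Rightarrow(5)$ and $(6)\Rightarrow(5)$ are redundant once the cycle closes). The genuine gap is in $(6)\Rightarrow(4)$, and you have correctly diagnosed where it is but not filled it: you produce valuation elements $d_i\in M_i\setminus\bigcup_{j\neq i}M_j$ and then hope that incomparability of the radicals $\sqrt{d_iD}$ plus Corollary~\ref{corollary1.14}(2) yields a contradiction, but nothing forces $a$ to lie in any $\sqrt{d_iD}$, and the $M_i$ need not lie in $\Omega$, so none of the comparability results (Corollary~\ref{corollary1.2}(2), Corollary~\ref{corollary1.14}(2)) can be brought to bear on the data you have. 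The phrase ``should combine to yield a contradiction'' is doing all the work, and as stated the argument does not close.

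The missing ingredient is the Schreier property. Since $D$ is a VFD, Proposition~\ref{proposition2.1} makes $D$ Schreier, and by \cite[Proposition 3.3]{az07} two elements of a pre-Schreier domain are coprime if and only if they are $t$-comaximal. Given distinct $M,N$ among the finitely many maximal $t$-ideals containing $a$, prime avoidance gives $b\in M$ and $c\in N$ avoiding all the other maximal $t$-ideals containing $a$; since $a,b$ lie in the common maximal $t$-ideal $M$ they are not $t$-comaximal, hence not coprime, so $aD\cup bD\subseteq dD$ for some \emph{nonunit} $d$, and likewise $aD\cup cD\subseteq eD$ for some nonunit $e$. Now $dD$ and $eD$ are two principal ideals containing $a$, so Corollary~\ref{corollary1.2}(2) makes them comparable; the larger of the two lies in some maximal $t$-ideal $A$, which then contains $a$, $b$ and $c$ simultaneously, forcing $M=A=N$, a contradiction. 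This step --- converting ``lie in a common maximal $t$-ideal'' into ``have a common nonunit divisor'' so that Corollary~\ref{corollary1.2}(2) applies --- is exactly what your proposal lacks, and it is the only nontrivial implication in the whole proposition.
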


\begin{proof}
(1) $\Rightarrow$ (2) This follows from \cite[Theorem 2.2]{c19}.

(2) $\Rightarrow$ (3) This is clear.

(3) $\Rightarrow$ (4) Let $a\in D$ be a valuation element of $D$. Then $\sqrt{aD}$ is a nonzero prime $t$-ideal of $D$ by Proposition~\ref{proposition1.7}(1). Consequently, $|\{M\in t$-${\rm Max}(D)\mid a\in M\}|=|\{M\in t$-${\rm Max}(D)\mid\sqrt{aD}\subseteq M\}|=1$, and hence $a$ is homogeneous.

(4) $\Rightarrow$ (1) This is obvious.

(2) $\Rightarrow$ (5) $\Rightarrow$ (6) This is clear.

(6) $\Rightarrow$ (4) Let $a\in D$ be a valuation element. Set $\Sigma=\{Q\in t$-${\rm Max}(D)\mid a\in Q\}$. Assume that $|\Sigma|\geq 2$. Then there are some distinct $M,N\in\Sigma$. Since $\Sigma$ is finite, there are some $b\in M\setminus\bigcup_{Q\in\Sigma\setminus\{M\}} Q$ and $c\in N\setminus\bigcup_{Q\in\Sigma\setminus\{N\}} Q$. Note that $a$ and $b$ are not $t$-comaximal. We infer by Proposition~\ref{proposition2.1} and \cite[Proposition 3.3]{az07} that $aD\cup bD\subseteq dD$ for some nonunit $d\in D$. It follows by analogy that $aD\cup cD\subseteq eD$ for some nonunit $e\in D$. Since $a\in dD\cap eD$, we have that $dD$ and $eD$ are comparable by Corollary~\ref{corollary1.2}(2). Without restriction let $eD\subseteq dD$. There is some $A\in t$-${\rm Max}(D)$ such that $dD\subseteq A$, and hence $aD\cup bD\cup cD\subseteq A$. This implies that $M=A=N$, a contradiction.

Now let every maximal $t$-ideal of $D$ be the radical of a principal ideal. We infer by Corollary~\ref{corollary1.14}(2) that every valuation element of $D$ is homogeneous.
\end{proof}

We say that $D$ is a {\em $t$-treed domain} if the set of prime $t$-ideals of $D$ is treed under inclusion. The class of $t$-treed domains includes P$v$MDs and integral domains of $t$-dimension one. We next study VFDs that are $t$-treed. We first need a lemma.

\begin{lemma}\label{lemma3.2}
Let $D$ be a $t$-treed domain. Then every valuation element of $D$ is homogeneous.
\end{lemma}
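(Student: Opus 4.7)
The plan is short: the result follows almost immediately from Proposition~\ref{proposition1.7}(1) together with the definition of \emph{$t$-treed}. Let $a\in D$ be a valuation element. I would first invoke Proposition~\ref{proposition1.7}(1) to conclude that $P:=\sqrt{aD}$ is a prime $t$-ideal of $D$. Since $a$ is a nonzero nonunit, $aD$ is a proper integral $t$-ideal, so $aD$ is contained in some maximal $t$-ideal; equivalently, the set $\Sigma=\{M\in t\text{-}\mathrm{Max}(D)\mid a\in M\}$ is nonempty, and $M\in\Sigma$ if and only if $P\subseteq M$.

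To finish, I would use the $t$-treed hypothesis on the family $\Sigma$. If $M_1,M_2\in\Sigma$, then $M_1$ and $M_2$ are prime $t$-ideals both containing the prime $t$-ideal $P$. Since the prime $t$-ideals of $D$ are treed under inclusion, the set of prime $t$-ideals containing $P$ is linearly ordered, so $M_1$ and $M_2$ are comparable; by maximality they must coincide. Hence $|\Sigma|=1$, which is exactly the statement that $a$ is homogeneous.

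There is no real obstacle here: the content of the lemma is entirely contained in (i) identifying the radical $\sqrt{aD}$ as a prime $t$-ideal, for which Proposition~\ref{proposition1.7}(1) does all the work, and (ii) translating ``treed'' into the uniqueness of a maximal $t$-ideal above a given prime $t$-ideal. No further lemmas or calculations are needed.
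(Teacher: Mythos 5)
Your argument breaks down at the final step, where you assert that $t$-treedness makes the set of prime $t$-ideals \emph{containing} $P=\sqrt{aD}$ linearly ordered. That is the wrong direction: in this paper (and in standard usage) a family of primes is treed when no member contains two incomparable members, i.e.\ the set of prime $t$-ideals \emph{contained in} a given prime $t$-ideal is a chain; a tree may still branch upward. This reading is confirmed by the paper's own use of the hypothesis: in Theorem~\ref{theorem3.4} the $t$-treed assumption is invoked to conclude that ${\rm Spec}(D_M)$ is linearly ordered for a maximal $t$-ideal $M$, and in Theorem~\ref{theorem4.2} the failure of $t$-treedness is witnessed by a maximal $t$-ideal containing two incomparable prime $t$-ideals. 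If your reading were correct, every $t$-treed domain would automatically be independent (each nonzero prime $t$-ideal under a unique maximal $t$-ideal), which is false: ${\rm Int}(\mathbb{Z})$ is a two-dimensional Pr\"ufer domain, hence $t$-treed with all nonzero ideals $t$-ideals, yet the height-one upper to zero $X\mathbb{Q}[X]\cap {\rm Int}(\mathbb{Z})$ lies inside the maximal ideal $\mathfrak{M}_{p,0}=\{f\mid f(0)\in p\mathbb{Z}_p\}$ for every prime $p$. A further warning sign is that your argument uses nothing about $a$ beyond the primeness of $\sqrt{aD}$, whereas the lemma is genuinely about valuation elements.

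The paper's proof has real content and uses the valuation overring essentially. Supposing $a$ lies in two distinct maximal $t$-ideals $M_1,M_2$, it localizes at $D\setminus(M_1\cup M_2)$ (using Proposition~\ref{proposition1.7}(4) to keep $a$ a valuation element), notes that the center $M\cap D$ of the valuation overring $V$ with $aV\cap D=aD$ lies in, say, $M_1$, so that $aD_{M_1}\cap D=aD$; then for $b\in M_2\setminus M_1$ treedness forces $\sqrt{aD}\subsetneq\sqrt{bD}$, and Proposition~\ref{proposition1.1}(3) gives $bD=bD_{M_1}\cap D=D$, a contradiction. Some argument of this kind, exploiting where $V$ is centered, is what your proposal is missing.
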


\begin{proof}
Let $a\in D$ be a valuation element. Assume to the contrary that $a$ is not homogeneous. Hence, there are at least two distinct maximal $t$-ideals $M_1$ and $M_2$ of $D$ containing $a$. Let $S=D\setminus (M_1\cup M_2)$, and note that $a\in S^{-1}D$ is a valuation element by Proposition~\ref{proposition1.7}(4). Hence, by replacing $D$ with $S^{-1}D$, we assume that $D$ is a treed domain with two maximal ideals $M_1$ and $M_2$.

Since $a$ is a valuation element, there is a valuation overring $V$ of $D$ such that $aV\cap D=aD$. Note that if $M$ is the maximal ideal of $V$, then $M\cap D$ is a proper prime ideal of $D$, and hence, without loss of generality, we may assume that $M\cap D\subseteq M_1$. Thus, $aD\subseteq aD_{M_1}\cap D\subseteq aV\cap D=aD$, whence $aD_{M_1}\cap D=aD$. Choose $b\in M_2\setminus M_1$. Then $\sqrt{aD}\subsetneq\sqrt{bD}$ because ${\rm Spec}(D)$ is treed. Thus, by Proposition~\ref{proposition1.1}(3), $bD=bD_{M_1}\cap D=D_{M_1}\cap D=D$, a contradiction. Therefore, $a$ is contained in a unique maximal $t$-ideal of $D$.
\end{proof}

Next we want to point out that a weaker form of Lemma~\ref{lemma3.2} can be proved without relying on prime avoidance.

\begin{remark}\label{remark3.3}
{\em Let $D$ be a VFD. If each two valuation elements of $D$ that are incomparable are $t$-comaximal, then every valuation element of $D$ is homogeneous.}
\end{remark}

\begin{proof}
Let $a\in D$ be a valuation element. Assume to the contrary that $a$ is not homogeneous. Then there are distinct maximal $t$-ideals $M$ and $Q$ of $D$ containing $a$. Since $D$ is a VFD, there are valuation elements $b,c\in D$ such that $b\in M\setminus Q$ and $c\in Q\setminus M$. Since $aD$ and $bD$ are contained in $M$, we infer that $aD$ and $bD$ are comparable, and hence $aD\subseteq bD$. It follows by analogy that $aD\subseteq cD$. Therefore, $bD$ and $cD$ are comparable by Corollary~\ref{corollary1.2}(2), a contradiction.
\end{proof}

A P$v$MD is a ring of Krull type if it is of finite $t$-character, and a P$v$MD is an independent ring of Krull type if it is weakly Matlis. Recall that a P$v$MD $D$ is an HoFD if and only if $D[X]$ is an HoFD, if and only if $D$ is an independent ring of Krull type with ${\rm Cl}_t(D)=\{0\}$ \cite[Corollary 2.6]{c19}.

\begin{theorem}\label{theorem3.4}
The following statements are equivalent for a $t$-treed domain $D$.
\begin{enumerate}
\item $D$ is a VFD.
\item $D$ is an HoFD and a P$v$MD.
\item $D$ is an independent ring of Krull type and ${\rm Cl}_t(D)=\{0\}$.
\item $D[X]$ is a VFD.
\item $D$ is a weakly Matlis GCD-domain.
\end{enumerate}
\end{theorem}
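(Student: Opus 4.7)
The strategy is to close the cycle $(5)\Rightarrow(3)\Rightarrow(2)\Rightarrow(1)\Rightarrow(5)$ together with the loop $(1)\Leftrightarrow(4)$. First, conditions $(2)$, $(3)$, $(5)$ are really the same statement after unpacking: a weakly Matlis GCD-domain is a P$v$MD that is weakly Matlis with trivial $t$-class group, equivalently an independent ring of Krull type with trivial $t$-class group; and by the HoFD characterization \cite[Theorem 2.2]{c19}, this is also the same as an HoFD that is a P$v$MD. None of these equivalences requires the $t$-treed hypothesis.

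For $(2)\Rightarrow(1)$, which likewise does not use $t$-treedness, I would take a homogeneous element $a\in D$ sitting in its unique maximal $t$-ideal $M$. Since $D$ is a P$v$MD, $V:=D_M$ is a valuation overring, and I claim $aV\cap D=aD$: if $x=ay$ with $x\in D$ and $y\in V$, then $a$ is a unit in $D_N$ for every maximal $t$-ideal $N\neq M$, so $y=x/a\in D_N$ for all such $N$, and combined with $y\in D_M$ and $D=\bigcap_{N\in t\text{-{\rm Max}}(D)}D_N$ this forces $y\in D$. Hence $a$ is a valuation element, and by HoFD every nonzero nonunit of $D$ factors into homogeneous and therefore valuation elements.

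The implication $(1)\Rightarrow(5)$ is where the $t$-treed hypothesis is essential. By Lemma~\ref{lemma3.2} every valuation element of $D$ is homogeneous, so condition~(4) of Proposition~\ref{proposition3.1} holds; consequently $D$ is HoFD and weakly Matlis, and Corollary~\ref{corollary2.3}(1) gives ${\rm Cl}_t(D)=\{0\}$. The only remaining task is the P$v$MD property, namely that $D_M$ is a valuation domain for every maximal $t$-ideal $M$. Here $D_M$ is itself a VFD (Corollary~\ref{corollary1.8}(1)), quasi-local, $t$-local, and inherits $t$-treedness from $D$. By Corollary~\ref{corollary1.4} and Lemma~\ref{lemma1.10}, it is enough to show that any two valuation elements of $D_M$ have comparable radicals, equivalently that the prime $t$-ideals of $D_M$ below $MD_M$ form a chain. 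The plan for this technical step is to argue by contradiction: given valuation elements $u,v\in D_M$ whose radicals $P,Q$ are incomparable, pick a nonzero $z\in P\cap Q$ and expand $z$ into incomparable valuation factors via the VFD decomposition. Corollary~\ref{corollary1.14}(2) combined with $t$-treedness of $D_M$ rules out any single valuation factor of $z$ lying in $P\cap Q$, forcing distinct factors $z_k\in P\setminus Q$ and $z_l\in Q\setminus P$; the Schreier splitting property (Proposition~\ref{proposition2.1}) applied to the divisibilities $u\mid uv$ and $v\mid uv$ should then produce further valuation elements whose radicals contain both $P$ and $Q$, and iterating this inside the strictly smaller branching locus should ultimately contradict the incomparability of $P$ and $Q$.

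For the loop $(1)\Leftrightarrow(4)$, the direction $(4)\Rightarrow(1)$ is immediate from Proposition~\ref{proposition2.6}, and $(1)\Rightarrow(4)$ follows from Corollary~\ref{corollary2.7} once $D$ is known to be a UMT-domain, which holds because $D$ is integrally closed by Corollary~\ref{corollary1.5} and a P$v$MD by the previous paragraph. The hard part is the linearization of the $t$-spectrum of $D_M$ in the third paragraph: $t$-treedness alone forces only upsets of $t$-primes to be chains and still permits downward branching, so the delicate point is to combine Schreier splitting with the finite VFD factorization structure to preclude branching below any maximal $t$-ideal.
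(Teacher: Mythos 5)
Most of your architecture coincides with the paper's: the equivalences $(2)\Leftrightarrow(3)\Leftrightarrow(5)$ are indeed just bookkeeping via \cite[Corollary 2.6]{c19} and the definition of a GCD-domain, your proof of $(2)\Rightarrow(1)$ is exactly the paper's computation $aD=aD_M\cap\bigl(\bigcap_{N\neq M}D_N\bigr)=aD_M\cap D$, and the loop $(1)\Leftrightarrow(4)$ via Proposition~\ref{proposition2.6} and Corollary~\ref{corollary2.7} is the paper's argument verbatim. The problem is that the one implication where the $t$-treed hypothesis does real work --- showing that a $t$-treed VFD is a P$v$MD, i.e.\ that $D_M$ is a valuation domain for each $M\in t$-${\rm Max}(D)$ --- is not actually proved. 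What you offer is a ``plan'': the Schreier-splitting-plus-iteration argument is a sequence of ``should''s, the claim that $D_M$ inherits $t$-treedness from $D$ is asserted without justification (prime $t$-ideals of $D_M$ need not contract to prime $t$-ideals of $D$, so this is not automatic), and no termination argument is given for ``iterating inside the strictly smaller branching locus.'' As written, this step is a genuine gap, and it is the heart of the theorem.

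The gap is also unnecessary, because there is a short direct argument (this is what the paper does). Take a nonzero nonunit $a$ of $D_M$; after multiplying by a unit you may assume $a\in D$. The minimal primes of $aD_M$ are exactly the ideals $P_M$ with $P\in\mathcal{P}(aD)$ and $P\subseteq M$; each such $P$ is a prime $t$-ideal of $D$ (a minimal prime of a principal, hence $t$-, ideal), and all of them sit inside the prime $t$-ideal $M$, so $t$-treedness of $D$ forces them into a chain. Hence $aD_M$ has a unique minimal prime and $\sqrt{aD_M}$ is prime. Since $D_M$ is a VFD by Corollary~\ref{corollary1.8}(1), Corollary~\ref{corollary1.11} makes $a$ a valuation element of $D_M$, and Corollary~\ref{corollary1.4} then gives that $D_M$ is a valuation domain. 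No contradiction, no prime avoidance inside $D_M$, and no appeal to $t$-treedness of $D_M$ itself is needed --- only to $t$-treedness of $D$ applied to the minimal primes of $aD$.
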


\begin{proof}
(1) $\Rightarrow$ (2) By Lemma~\ref{lemma3.2}, it suffices to show that $D$ is a P$v$MD. Let $M$ be a maximal $t$-ideal of $D$. Then $D_M$ is a VFD by Corollary~\ref{corollary1.8}(1) and ${\rm Spec}(D_M)$ is linearly ordered by assumption. Let $a\in D_M$ be a nonzero nonunit. Then, since ${\rm Spec}(D_M)$ is linearly ordered, $\sqrt{aD_M}$ is a prime ideal, and hence $a$ is a valuation element of $D_M$ by Corollary~\ref{corollary1.11}. Thus, $D_M$ is a valuation domain by Corollary~\ref{corollary1.4}.

(2) $\Rightarrow$ (1) Let $a\in D$ be a homogeneous element. Then there is a unique maximal $t$-ideal $M$ of $D$ with $a\in M$. Hence, $$aD=\bigcap_{P\in t\text{-{\rm Max}}(D)}aD_P=aD_M\cap\Big(\bigcap_{P\in t\text{-{\rm Max}}(D)\setminus\{M\}}D_P\Big)=aD_M\cap D,$$ and since $D_M$ is a valuation domain, $a$ is a valuation element of $D$. Thus, $D$ is a VFD.

(2) $\Leftrightarrow$ (3) This follows from \cite[Corollary 2.6]{c19}.

(1) $\Rightarrow$ (4) If $D$ is a VFD, then $D$ is a P$v$MD by the proof of (1) $\Rightarrow$ (2). Thus, $D[X]$ is a VFD by Corollary~\ref{corollary2.7}.

(4) $\Rightarrow$ (1) This is an immediate consequence of Proposition~\ref{proposition2.6}.

(3) $\Leftrightarrow$ (5) This follows because a GCD-domain is a P$v$MD with trivial $t$-class group.
\end{proof}

Next we want to point out that even a Schreier domain with a unique maximal $t$-ideal need not be a VFD. In particular, weakly Matlis Schreier domains and Schreier domains which are HoFDs need not be VFDs. Recall that a quasi-local integral domain $D$ with maximal ideal $M$ is a {\it pseudo-valuation domain \textnormal{(}PVD\textnormal{)}} if for all ideals $A$ and $B$ of $D$, it follows that $A\subseteq B$ or $BM\subseteq AM$
\cite[Theorem 1.4]{hh78}.

\begin{example}\label{example3.5}\cite[Example 2.10]{az07}
{\em Let $T=\mathbb{C}[X]$, let $K$ be a quotient field of $T$ and let $S$ be the integral closure of $T$ in an algebraic closure $\overline{K}$ of $K$. Let $Q$ be a maximal ideal of $S$, let $\overline{\mathbb{Q}}\subseteq\overline{K}$ be the algebraic closure of $\mathbb{Q}$ and let $D=\overline{\mathbb{Q}}+Q_Q$. Then $D$ is a Schreier domain with a unique maximal $t$-ideal and yet $D$ is not a VFD.}
\end{example}

\begin{proof}
It follows from \cite[Example 2.10]{az07} and its proof that $D$ is a Schreier domain and a PVD, but not a B\'ezout domain. Since $D$ is a PVD, we have that ${\rm Spec}(D)$ is linearly ordered, and thus $D$ is $t$-treed. In particular, $D$ has a unique maximal $t$-ideal. Since $D$ is not a B\'ezout domain, it follows that $D$ is not a valuation domain, and thus $D$ is not a GCD-domain. Therefore, $D$ is not a VFD by Theorem~\ref{theorem3.4}.
\end{proof}

\begin{corollary}\label{corollary3.6}
A P$v$MD $D$ is a VFD if and only if $D$ is an HoFD.
\end{corollary}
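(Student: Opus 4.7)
The plan is to reduce this directly to Theorem~\ref{theorem3.4}. Since that theorem characterizes VFDs among $t$-treed domains and in particular shows ``VFD $\Leftrightarrow$ HoFD and P$v$MD'' in the $t$-treed setting, the only thing to verify is that every P$v$MD is automatically a $t$-treed domain; once this is known, the equivalence is immediate.

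To verify that a P$v$MD is $t$-treed, I would argue as follows. Let $D$ be a P$v$MD and let $P_1, P_2$ be prime $t$-ideals of $D$ both contained in a common prime $t$-ideal $Q$. Choose a maximal $t$-ideal $M$ of $D$ containing $Q$. Then $P_1D_M$ and $P_2D_M$ are prime ideals of the valuation domain $D_M$ (using the P$v$MD characterization that $D_M$ is a valuation domain for every maximal $t$-ideal $M$), so they are comparable, say $P_1D_M \subseteq P_2D_M$. Since prime $t$-ideals of $D$ that are contained in a maximal $t$-ideal $M$ are recovered by contraction from $D_M$ (each $P_i \subseteq M$ satisfies $P_i = P_iD_M \cap D$, as $P_i$ is disjoint from $D \setminus M$), contracting gives $P_1 \subseteq P_2$. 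Thus the primes contained in $Q$ form a chain, so $t\text{-}\mathrm{Spec}(D)$ is treed.

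With $D$ $t$-treed, Theorem~\ref{theorem3.4} applies: $D$ is a VFD if and only if $D$ is an HoFD and a P$v$MD. Because $D$ is assumed to be a P$v$MD, this collapses to $D$ is a VFD $\Leftrightarrow$ $D$ is an HoFD, which is the desired statement.

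The only potential obstacle is the ``contraction'' step: making sure that a prime $t$-ideal $P$ of $D$ contained in a maximal $t$-ideal $M$ actually satisfies $P = PD_M \cap D$, so that the chain in $\mathrm{Spec}(D_M)$ pulls back to a chain of $t$-primes in $D$. This is routine since $P$ is disjoint from $D \setminus M$, but it is the one technical point worth stating explicitly. Everything else is a direct invocation of previously established results.
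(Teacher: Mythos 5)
Your proposal is correct and follows exactly the paper's route: the paper's proof simply notes that a P$v$MD is $t$-treed (citing this as well known) and then invokes Theorem~\ref{theorem3.4}. The only difference is that you spell out the verification that a P$v$MD is $t$-treed via localization at a maximal $t$-ideal, which is a valid and standard argument.
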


\begin{proof}
It is well known that a P$v$MD is a $t$-treed domain. Thus, the result follows directly from Theorem~\ref{theorem3.4}.
\end{proof}

In \cite[Section 4]{c19}, Chang studied HoFDs that are P$v$MDs and he also constructed several examples of such kind of integral domains. An integral domain $D$ has {\it finite character} if each nonzero element of $D$ is contained in at most finitely many maximal ideals of $D$. The domain $D$ is said to be {\it h-local} if $D$ has finite character and each nonzero prime ideal of $D$ is contained in a unique maximal ideal. Hence, $D$ is an h-local domain if $D$ is a weakly Matlis domain whose maximal ideals are $t$-ideals.

\begin{corollary}\label{corollary3.7}
A Pr\"ufer domain $D$ is a VFD if and only if $D$ is an h-local Pr\"ufer domain with ${\rm Pic}(D)=\{0\}$.
\end{corollary}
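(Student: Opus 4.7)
The plan is to derive this as a direct consequence of Theorem~\ref{theorem3.4}. Every Pr\"ufer domain is a P$v$MD, hence is $t$-treed, so Theorem~\ref{theorem3.4} asserts that a Pr\"ufer domain $D$ is a VFD if and only if it is a weakly Matlis GCD-domain. It therefore suffices to translate the two conditions ``weakly Matlis'' and ``GCD-domain'' into the ideal-theoretic language specific to Pr\"ufer domains.

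For the first translation, I would use the fact that in a Pr\"ufer domain every maximal ideal is a $t$-ideal, so $t$-${\rm Max}(D)={\rm Max}(D)$. Consequently finite $t$-character is the same as finite character, and the weakly Matlis independence condition on maximal $t$-ideals becomes precisely the h-local condition that each nonzero prime ideal lies in a unique maximal ideal. In the terminology recalled just before the statement (``$D$ is h-local if $D$ is a weakly Matlis domain whose maximal ideals are $t$-ideals''), this says that a Pr\"ufer domain is weakly Matlis if and only if it is h-local.

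For the second translation, the key observation is that in a Pr\"ufer domain every nonzero finitely generated ideal is invertible, so ${\rm Cl}_t(D)={\rm Pic}(D)$. If $D$ is a Pr\"ufer GCD-domain, then for any nonzero $a,b\in D$ we may write $(a,b)=d(x,y)$, where $d=\gcd(a,b)$ and $\gcd(x,y)=1$; since $(x,y)$ is invertible we have $(x,y)=(x,y)_v=D$, hence $(a,b)=dD$ is principal. Thus $D$ is B\'ezout and ${\rm Pic}(D)=\{0\}$. Conversely, if $D$ is Pr\"ufer with ${\rm Pic}(D)=\{0\}$, then every finitely generated ideal is invertible, hence principal, so $D$ is B\'ezout and in particular a GCD-domain.

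Chaining these two equivalences with Theorem~\ref{theorem3.4} yields the statement. There is no real obstacle: once one notes that Pr\"ufer domains are $t$-treed and that the $t$-theoretic conditions of Theorem~\ref{theorem3.4} collapse to their classical counterparts (since maximal ideals are $t$-ideals and finitely generated ideals are invertible), the proof is essentially a bookkeeping exercise.
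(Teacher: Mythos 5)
Your proof is correct and follows essentially the same route as the paper: the paper also deduces the corollary directly from Theorem~\ref{theorem3.4}, merely using its item (3) (independent ring of Krull type with ${\rm Cl}_t(D)=\{0\}$) where you use item (5) (weakly Matlis GCD-domain), and noting that for a Pr\"ufer domain these conditions collapse to ``h-local'' and ``${\rm Pic}(D)=\{0\}$''. Your translations (maximal ideals are $t$-ideals, hence weakly Matlis $=$ h-local; Pr\"ufer GCD $=$ B\'ezout $=$ trivial Picard group) are exactly the bookkeeping the paper leaves implicit.
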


\begin{proof}
It is clear that a Pr\"ufer domain is an independent ring of Krull type if and only if it is an h-local Pr\"ufer domain. Thus, the result follows directly from Theorem~\ref{theorem3.4}.
\end{proof}

Let $D$ be a UMT-domain or a $t$-treed domain. Then $D$ is a VFD if and only if $D[X]$, the polynomial ring over $D$, is a VFD
by Corollary~\ref{corollary2.7} and Theorem~\ref{theorem3.4}. However, we don't know if this is true in general.

\begin{question}\label{question3.8}
{\em Let $D[X]$ be the polynomial ring over a VFD D. Is $D[X]$ a VFD?}
\end{question}

\section{Unique valuation factorization domains}

For $n\in\mathbb{N}$ let $\mathcal{S}_n$ be the symmetric group on $n$ letters.

\begin{definition}\label{definition4.1}
{\em Let $D$ be an integral domain. We say that $D$ is a {\it unique VFD \textnormal{(}UVFD\textnormal{)}} if the following two conditions are satisfied.
\begin{enumerate}
\item Every nonzero nonunit of $D$ is a finite product of incomparable valuation elements of $D$.
\item If $n,m\in\mathbb{N}$ and $(a_i)_{i=1}^n$ and $(b_j)_{j=1}^m$ are two sequences of incomparable valuation elements of $D$ with $\prod_{i=1}^n a_i=\prod_{j=1}^m b_j$, then $n=m$ and there is some $\sigma\in\mathcal{S}_n$ such that $a_iD=b_{\sigma(i)}D$ for each $i\in [1,n]$.
\end{enumerate}}
\end{definition}

Clearly, UVFDs are VFDs by Proposition~\ref{proposition1.13}. Moreover, by the remark after Proposition~\ref{proposition1.13}, it follows that a VFD $D$ is a UVFD if and only if for all $n\in\mathbb{N}$ and all sequences $(a_i)_{i=1}^n$ and $(b_i)_{i=1}^n$ of incomparable valuation elements of $D$ with $\prod_{i=1}^n a_i=\prod_{i=1}^n b_i$ and $\sqrt{a_jD}=\sqrt{b_jD}$ for each $j\in [1,n]$, it follows that $a_jD=b_jD$ for each $j\in [1,n]$.

\begin{theorem}\label{theorem4.2}
Let $D$ be a VFD and let $\Omega=\{\sqrt{xD}\mid x\in D\setminus\{0\},\sqrt{xD}\in {\rm Spec}(D)\}$. The following statements are equivalent.
\begin{enumerate}
\item $D$ is a UVFD.
\item Each two incomparable valuation elements of $D$ are coprime.
\item $D$ is a P$v$MD.
\item $D_P$ is a valuation domain for each $P\in\Omega$.
\item For all $A,B,C\in\Omega$ with $A\cup B\subseteq C$, $A$ and $B$ are comparable.
\end{enumerate}
\end{theorem}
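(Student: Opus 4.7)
The plan is to establish the two interlocking cycles (1) $\Rightarrow$ (5) $\Rightarrow$ (2) $\Rightarrow$ (1) and (2) $\Rightarrow$ (3) $\Rightarrow$ (4) $\Rightarrow$ (5), which together yield all five equivalences. The first cycle ties the UVFD definition to the coprimality statement (2) via the comparability condition (5); the second inserts the P$v$MD and localization conditions via standard P$v$MD machinery on top of the Schreier property already recorded in Section~2.

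For (1) $\Rightarrow$ (5), I take $A = \sqrt{aD}$, $B = \sqrt{bD}$, $C = \sqrt{cD}$ in $\Omega$ with $A \cup B \subseteq C$. By Corollary~\ref{corollary1.2}(1) both $(aD, cD)$ and $(bD, cD)$ are comparable pairs; if either $A = C$ or $B = C$, then $A, B$ are comparable directly, so I may assume $c \mid_D a$ and $c \mid_D b$. Supposing for contradiction that $A$ and $B$ are incomparable, the computations $\sqrt{(ac)D} = A \cap C = A$ and $\sqrt{(bc)D} = B$ show via Corollary~\ref{corollary1.11} that $ac$ and $bc$ are valuation elements; moreover the pairs $\{a, bc\}$ and $\{ac, b\}$ are each incomparable. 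Hence
\[
abc = a \cdot (bc) = (ac) \cdot b
\]
presents the same element as two factorizations into two incomparable valuation elements, and the UVFD uniqueness provides some $\sigma \in \mathcal{S}_2$ forcing either $aD = bD$ (contradicting $A \neq B$) or $aD = (ac)D$ (making $c$ a unit, contradicting $C \in \Omega$).

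For (5) $\Rightarrow$ (2), if incomparable valuation elements $a, b$ had a common nonunit divisor $c$, Proposition~\ref{proposition1.1}(2) would make $c$ a valuation element with $\sqrt{aD}, \sqrt{bD} \subseteq \sqrt{cD}$, and (5) would contradict Corollary~\ref{corollary1.2}(1). For (2) $\Rightarrow$ (1), I use the remark following Proposition~\ref{proposition1.13} to reduce to $\prod_{i=1}^{n} a_i = \prod_{i=1}^{n} b_i$ with $\sqrt{a_j D} = \sqrt{b_j D}$ for each $j$; iterating the pre-Schreier property from Proposition~\ref{proposition2.1}, I write $a_i = c_1 \cdots c_n$ with $c_k \mid_D b_k$. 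For $k \neq i$, the pair $a_i, b_k$ consists of incomparable valuation elements (their radicals agree with the incomparable $\sqrt{a_i D}, \sqrt{a_k D}$), so (2) makes them coprime and forces $c_k$ to be a unit; hence $a_i \mid_D b_i$, and symmetry yields $a_i D = b_i D$.

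For (2) $\Rightarrow$ (3), I fix a maximal $t$-ideal $M$ and show $D_M$ is a valuation domain: given a nonzero nonunit $z \in D_M$ with $z \in D$, I factor $z = \prod_{i=1}^n z_i$ into incomparable valuation elements; if two factors $z_i, z_j$ both lay in $M$, (2) would give them coprime, whence Proposition~\ref{proposition2.1} together with \cite[Proposition~3.3]{az07} would force $(z_i, z_j)_v = D$, contradicting $(z_i, z_j) \subseteq M = M_v$. Thus at most one $z_i$ lies in $M$, and by Proposition~\ref{proposition1.7}(4) this remaining factor is a valuation element of $D_M$, so Corollary~\ref{corollary1.4} makes $D_M$ a valuation domain. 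For (3) $\Rightarrow$ (4), each $P \in \Omega$ is a prime $t$-ideal (Corollary~\ref{corollary1.14}(3) and Proposition~\ref{proposition1.7}(1)) sitting inside some maximal $t$-ideal $M$, so $D_P$ is a localization of the valuation domain $D_M$. For (4) $\Rightarrow$ (5), if $A \cup B \subseteq C$ with $C \in \Omega$, then $AD_C$ and $BD_C$ are primes of the valuation domain $D_C$, hence comparable, so $A$ and $B$ are comparable in $D$. I expect the main obstacle to be the implication (1) $\Rightarrow$ (5), where one must discover the asymmetric double factorization $abc = a(bc) = (ac)b$ and verify both decompositions genuinely lie in the class of incomparable valuation-element factorizations to which the UVFD axiom applies; the remaining steps are routine consequences of the Schreier structure and standard P$v$MD localization arguments.
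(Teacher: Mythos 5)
Your proposal is correct, and every step checks out, but it routes the implications differently from the paper, which proves the single cycle (1)\,$\Rightarrow$\,(2)\,$\Rightarrow$\,(3)\,$\Rightarrow$\,(4)\,$\Rightarrow$\,(5)\,$\Rightarrow$\,(1). The central device is shared: the paper's (1)\,$\Rightarrow$\,(2) is exactly your double factorization $a(bc)=(ac)b$, except that the paper takes $c$ to be a common divisor of $a$ and $b$ directly, whereas you take $c$ to be a valuation element with $\sqrt{aD}\cup\sqrt{bD}\subseteq\sqrt{cD}$ and first deduce $c\mid_D a$, $c\mid_D b$ from Corollary~\ref{corollary1.2}(1); this lets you target (5) instead of (2). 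Likewise your (2)\,$\Rightarrow$\,(1) is the paper's (5)\,$\Rightarrow$\,(1) (Schreier splitting $a_i=\prod_j b'_j$ with $b'_j\mid_D b_j$), with the cross-divisors $b'_j$ ($j\neq i$) killed by coprimality rather than by condition (5). The genuinely different piece is your (2)\,$\Rightarrow$\,(3): the paper reduces to showing $D$ is $t$-treed and then invokes Theorem~\ref{theorem3.4}, whose proof carries the real content (localizing at a maximal $t$-ideal and using Corollaries~\ref{corollary1.11} and~\ref{corollary1.4}); you instead argue directly that at most one incomparable valuation factor of an element can lie in a given maximal $t$-ideal $M$ (via coprime $\Rightarrow$ $t$-comaximal from Proposition~\ref{proposition2.1} and \cite[Proposition 3.3]{az07}), so that every nonzero nonunit of $D_M$ is an associate of a single valuation element and Corollary~\ref{corollary1.4} applies. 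This buys a proof that is self-contained within Sections~1 and~2 and does not lean on Theorem~\ref{theorem3.4}, at the modest cost of redoing a localization argument that Theorem~\ref{theorem3.4} already packages. The remaining implications (3)\,$\Rightarrow$\,(4)\,$\Rightarrow$\,(5) coincide with the paper's.
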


\begin{proof}
(1) $\Rightarrow$ (2) Let $a,b\in D$ be incomparable valuation elements of $D$ and let $c\in D$ be such that $aD\cup bD\subseteq cD$. Set $v=ac$ and $w=bc$. Then $\sqrt{vD}=\sqrt{aD}$, $\sqrt{wD}=\sqrt{bD}$ and $vb=aw$. It follows from Corollary~\ref{corollary1.11} that $v$ and $w$ are valuation elements of $D$. Note that $\sqrt{aD}$ and $\sqrt{bD}$ are incomparable by Corollary~\ref{corollary1.2}(1). Therefore, $v$ and $b$ are incomparable and $a$ and $w$ are incomparable. We infer that $vD=aD$ by assumption, and hence $c$ is a unit of $D$.

(2) $\Rightarrow$ (3) By Theorem~\ref{theorem3.4}, it is sufficient to show that $D$ is $t$-treed. Assume to the contrary that there is some $M\in t$-${\rm Max}(D)$ and some incomparable prime $t$-ideals $P$ and $Q$ of $D$ that are contained in $M$. Observe that there are some valuation elements $a,b\in D$ such that $a\in P\setminus Q$ and $b\in Q\setminus P$. It is clear that $a$ and $b$ are incomparable. Therefore, $a$ and $b$ are coprime. It follows from Proposition~\ref{proposition2.1} and \cite[Proposition 3.3]{az07} that $a$ and $b$ are $t$-comaximal, a contradiction.

(3) $\Rightarrow$ (4) This is clear.

(4) $\Rightarrow$ (5) Let $A,B,P\in\Omega$ be such that $A\cup B\subseteq P$. Then $D_P$ is a valuation domain and $A_P$ and $B_P$ are prime ideals of $D_P$. Hence, $A_P$ and $B_P$ are comparable, and thus $A=A_P\cap D$ and $B=B_P\cap D$ are comparable.

(5) $\Rightarrow$ (1) Let $n\in\mathbb{N}$ and let $(a_i)_{i=1}^n$ and $(b_i)_{i=1}^n$ be two sequences of incomparable valuation elements of $D$ such that $\prod_{i=1}^n a_i=\prod_{i=1}^n b_i$ and $\sqrt{a_iD}=\sqrt{b_iD}$ for each $i\in [1,n]$. Let $i\in [1,n]$. Since $a_i\mid_D\prod_{j=1}^n b_j$ we infer by Proposition~\ref{proposition2.1} that $a_i=\prod_{j=1}^n b^{\prime}_j$ for some elements $b^{\prime}_j$ of $D$ such that $b^{\prime}_j\mid_D b_j$ for each $j\in [1,n]$. If $j\in [1,n]\setminus\{i\}$, then $\sqrt{a_iD}$ and $\sqrt{b_jD}$ are incomparable, and since $\sqrt{a_iD}\cup\sqrt{b_jD}\subseteq\sqrt{b^{\prime}_jD}$, we infer that $b^{\prime}_j$ is a unit of $D$. This implies that $a_iD=b^{\prime}_iD\supseteq b_iD$. It follows by analogy that $b_iD\supseteq a_iD$, and hence $a_iD=b_iD$. Thus, $D$ is a UVFD.
\end{proof}

\begin{corollary}\label{corollary4.3}
Let $D$ be a UVFD and let $S$ be a multiplicatively closed subset of $D$. Then $S^{-1}D$ is a UVFD.
\end{corollary}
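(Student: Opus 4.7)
The plan is to leverage the characterization of UVFDs given by Theorem~\ref{theorem4.2} and reduce the problem to two facts about localizations that are either already proved in the paper or are standard. Specifically, by the equivalence $(1) \Leftrightarrow (3)$ in Theorem~\ref{theorem4.2}, the domain $D$ is a P$v$MD, so it suffices to verify that $S^{-1}D$ is simultaneously a VFD and a P$v$MD, and then reapply Theorem~\ref{theorem4.2} in the other direction.

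First I would invoke Corollary~\ref{corollary1.8}(1) to conclude immediately that $S^{-1}D$ is a VFD. Next I would use the standard fact that the class of P$v$MDs is stable under localization: if every maximal $t$-ideal $P$ of $D$ localizes to a valuation domain $D_P$, and if $Q$ is a maximal $t$-ideal of $S^{-1}D$, then $Q = P S^{-1}D$ for some prime $t$-ideal $P$ of $D$ disjoint from $S$, whence $(S^{-1}D)_Q = D_P$ is a valuation domain. (Alternatively, one can argue via the fact that every nonzero finitely generated ideal of $S^{-1}D$ is the extension of such an ideal of $D$ and that $t$-invertibility passes to quotient rings of fractions.) Combining these two facts, $S^{-1}D$ is a VFD that is a P$v$MD, so the implication $(3) \Rightarrow (1)$ of Theorem~\ref{theorem4.2} finishes the argument.

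There is essentially no obstacle here; the work has all been done in Theorem~\ref{theorem4.2}, which converts the quite rigid uniqueness condition in Definition~\ref{definition4.1} into the localization-friendly property of being a P$v$MD. The only matter to mention, should one wish to be fastidious, is the degenerate case in which $S^{-1}D$ is a field, where both clauses of Definition~\ref{definition4.1} hold vacuously. One could instead attempt a direct proof by taking two factorizations of an element $\tfrac{a}{s} \in S^{-1}D$ into incomparable valuation elements, clearing denominators, and invoking uniqueness in $D$, but this approach is considerably more cumbersome and duplicates the content already encoded in Theorem~\ref{theorem4.2}.
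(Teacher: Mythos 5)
Your proposal is correct and follows exactly the paper's own argument: apply Theorem~\ref{theorem4.2} to see $D$ is a P$v$MD, use Corollary~\ref{corollary1.8}(1) for the VFD part, note that the P$v$MD property passes to $S^{-1}D$, and apply Theorem~\ref{theorem4.2} again. The paper simply states the localization stability of P$v$MDs as clear, where you supply the standard justification.
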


\begin{proof}
It follows from Theorem~\ref{theorem4.2} that $D$ is a VFD and a P$v$MD. It is clear that $S^{-1}D$ is a P$v$MD. Moreover, $S^{-1}D$ is a VFD by Corollary~\ref{corollary1.8}(1). Therefore, $S^{-1}D$ is a UFVD by Theorem~\ref{theorem4.2}.
\end{proof}

\begin{corollary}\label{corollary4.4}
Let $D[X]$ be the polynomial ring over an integral domain $D$. Then $D$ is a UVFD if and only if $D[X]$ is a UVFD.
\end{corollary}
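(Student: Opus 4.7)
The plan is to reduce the claim to Theorem~\ref{theorem4.2} and Corollary~\ref{corollary2.7}, plus the well-known Kang result that $D$ is a P$v$MD if and only if $D[X]$ is a P$v$MD (cited in the introduction). By Theorem~\ref{theorem4.2}, being a UVFD is equivalent to being a VFD that is also a P$v$MD, so the statement becomes: $D$ is a VFD and a P$v$MD if and only if $D[X]$ is a VFD and a P$v$MD.

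For the forward direction, I would assume $D$ is a UVFD. Then Theorem~\ref{theorem4.2} gives that $D$ is a P$v$MD, so $D[X]$ is a P$v$MD. Since every P$v$MD is an integrally closed UMT-domain (as noted in the paragraph before Proposition~\ref{proposition2.6} via \cite[Proposition 3.2]{hz89}), in particular $D$ is a UMT-domain, so Corollary~\ref{corollary2.7} applies and yields that $D[X]$ is a VFD. Combining, $D[X]$ is both a VFD and a P$v$MD, hence a UVFD by Theorem~\ref{theorem4.2}.

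For the reverse direction, assume $D[X]$ is a UVFD. Then $D[X]$ is a VFD and a P$v$MD by Theorem~\ref{theorem4.2}, so $D$ is a P$v$MD (again by the Kang equivalence), and in particular $D$ is an integrally closed UMT-domain. Now Corollary~\ref{corollary2.7} gives that $D$ is a VFD. One more application of Theorem~\ref{theorem4.2} finishes the argument.

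There is no real obstacle; the entire content has been packaged into Theorem~\ref{theorem4.2}, Corollary~\ref{corollary2.7}, and the polynomial stability of the P$v$MD property. The only subtlety worth mentioning explicitly is that one must cite the P$v$MD/UMT equivalence to legitimize the use of Corollary~\ref{corollary2.7}; alternatively, one can appeal directly to Proposition~\ref{proposition2.6} for the ``$D[X]\Rightarrow D$'' passage of the VFD property, since its forward half was proved there without the UMT assumption.
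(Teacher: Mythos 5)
Your proposal is correct and follows exactly the paper's route: reduce UVFD to VFD plus P$v$MD via Theorem~\ref{theorem4.2}, transfer the P$v$MD property across $D\leftrightarrow D[X]$ by Kang's theorem, and transfer the VFD property by Corollary~\ref{corollary2.7} (legitimized by the fact that a P$v$MD is a UMT-domain). The paper's proof is just a terser statement of the same argument.
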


\begin{proof}
Note that $D$ is a P$v$MD if and only if $D[X]$ is a P$v$MD \cite[Theorem 3.7]{k89}. Thus, the result is an immediate consequence of Corollary~\ref{corollary2.7} and Theorem~\ref{theorem4.2}.
\end{proof}

\begin{corollary}\label{corollary4.5}
An integral domain is a UVFD if and only if it is a weakly Matlis GCD-domain.
\end{corollary}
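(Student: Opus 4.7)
The plan is to combine Theorem~\ref{theorem4.2} with Theorem~\ref{theorem3.4} and the standard fact that every P$v$MD is $t$-treed. The statement is vacuous when $D$ is a field (both sides hold trivially), so I will assume $D$ is not a field.

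For the forward direction, suppose $D$ is a UVFD. By Theorem~\ref{theorem4.2}, $D$ is both a VFD and a P$v$MD. Since every P$v$MD is $t$-treed (each localization at a maximal $t$-ideal is a valuation domain, so the prime $t$-spectrum is treed), $D$ is a $t$-treed VFD. Theorem~\ref{theorem3.4}, specifically the equivalence (1) $\Leftrightarrow$ (5), then delivers that $D$ is a weakly Matlis GCD-domain.

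For the converse, suppose $D$ is a weakly Matlis GCD-domain. In particular, $D$ is a GCD-domain, hence a P$v$MD, hence $t$-treed. Invoking Theorem~\ref{theorem3.4}, (5) $\Rightarrow$ (1), we conclude that $D$ is a VFD. Since $D$ is also a P$v$MD, Theorem~\ref{theorem4.2}, (3) $\Rightarrow$ (1), yields that $D$ is a UVFD.

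There is no real obstacle here: both directions reduce to citing the earlier theorems, with the only glue being the observation that P$v$MDs lie in the class of $t$-treed domains (needed to activate Theorem~\ref{theorem3.4} in the forward direction). The proof can be written as a few lines.
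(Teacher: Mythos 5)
Your proof is correct and is essentially the paper's own argument: the paper likewise derives the corollary as an immediate consequence of Theorems~\ref{theorem3.4} and~\ref{theorem4.2}, with the same implicit glue that P$v$MDs are $t$-treed. You have merely made the routing through the two theorems explicit.
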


\begin{proof}
This is an immediate consequence of Theorems~\ref{theorem3.4} and~\ref{theorem4.2}.
\end{proof}

Let $D$ be an integral domain with quotient field $K$. We say that $D$ satisfies the {\it Principal Ideal Theorem} if each minimal prime ideal of each nonzero principal ideal of $D$ is of height one. It is well known that Noetherian domains satisfy the Principal Ideal Theorem. Recall that an element $x\in K$ is said to be {\it almost integral} over $D$ if there exists some nonzero $c\in D$ such that $cx^n\in D$ for each $n\in\mathbb{N}$. We say that $D$ is {\it completely integrally closed} if for all $x\in K$ such that $x$ is almost integral over $D$, it follows that $x\in D$. Moreover, $D$ is said to be {\it archimedean} if for each nonunit $x\in D$, $\bigcap_{n\in\mathbb{N}} x^nD=(0)$. Observe that if $t$-$\dim(D)=1$, then $D$ satisfies the Principal Ideal Theorem. Moreover, if $D$ is completely integrally closed or $D$ satisfies the Principal Ideal Theorem, then $D$ is archimedean.

\begin{proposition}\label{proposition4.6}
Let $D$ be a VFD that is not a field. The following statements are equivalent.
\begin{enumerate}
\item $D$ is a weakly factorial GCD-domain.
\item $t$-$\dim(D)=1$.
\item $D$ is completely integrally closed.
\item $D$ is archimedean.
\item $D$ satisfies the Principal Ideal Theorem.
\end{enumerate}
\end{proposition}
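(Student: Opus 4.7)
The plan is to close the cycle $(1) \Leftrightarrow (2) \Rightarrow (5) \Rightarrow (4) \Rightarrow (2)$ together with $(1) \Rightarrow (3) \Rightarrow (4)$. The equivalence $(1) \Leftrightarrow (2)$ is Corollary~\ref{corollary1.9}, and the implications $(2) \Rightarrow (5)$, $(5) \Rightarrow (4)$, $(3) \Rightarrow (4)$ were all recorded in the paragraph preceding the proposition. So only $(1) \Rightarrow (3)$ and $(4) \Rightarrow (2)$ require actual work.

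For $(1) \Rightarrow (3)$ I would use the standard representation of a weakly factorial GCD-domain: $D = \bigcap_{P\in X^1(D)} D_P$, locally finite. Since $D$ is a GCD-domain of $t$-dimension one, each $D_P$ is a one-dimensional valuation overring of $D$, hence of rank one, hence completely integrally closed. An intersection (inside the common quotient field $K$) of completely integrally closed overrings is completely integrally closed, because almost-integrality over $D$ forces almost-integrality over each $D_P$ (the witness $c \in D$ works uniformly).

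The main work is $(4) \Rightarrow (2)$. The starting observation is that Remark~\ref{remark1.3} together with the archimedean hypothesis forces every prime $t$-ideal of the form $\sqrt{aD}$ (for $a$ a valuation element) to be of height one: any nonzero prime $Q \subsetneq \sqrt{aD}$ would lie in $\bigcap_n a^n D = (0)$. Now suppose for contradiction there is a chain $(0) \subsetneq P' \subsetneq M$ of prime $t$-ideals. Pick any element of $M \setminus P'$, factor it as a product of valuation elements, and exploit the primality of $P'$ and $M$: since $P'$ is prime, no factor lies in $P'$, and since $M$ is prime some factor lies in $M$, yielding a valuation element $a \in M \setminus P'$. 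By the height-one observation applied to $\sqrt{aD}$, two cases arise. If $P' \subseteq \sqrt{aD}$ then $P' \subsetneq \sqrt{aD}$, and Remark~\ref{remark1.3} gives $P' \subseteq \bigcap_n a^n D = (0)$, contradicting $P' \neq (0)$. Otherwise, the same extraction argument applied to $P' \setminus \sqrt{aD}$ (using that $\sqrt{aD}$ is prime by Proposition~\ref{proposition1.7}(1)) produces a valuation element $b \in P' \setminus \sqrt{aD}$; then $\sqrt{aD}$ and $\sqrt{bD}$ are incomparable height-one prime $t$-ideals both contained in $M$.

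The main obstacle is ruling out this last configuration, and I expect to do so via the Schreier machinery of Section~2. Any non-unit common divisor $d$ of $a$ and $b$ is, by Proposition~\ref{proposition1.1}(2), itself a valuation element; hence $\sqrt{dD}$ is a height-one prime $t$-ideal (again by the initial observation) containing both $\sqrt{aD}$ and $\sqrt{bD}$, which forces $\sqrt{dD} = \sqrt{aD} = \sqrt{bD}$ by comparing heights, contradicting incomparability. Thus $a$ and $b$ are coprime. Since $D$ is Schreier by Proposition~\ref{proposition2.1}, coprime elements are $t$-comaximal (as cited from \cite[Proposition 3.3]{az07} in the proof of Proposition~\ref{proposition3.1}), so $(a,b)_t = D$. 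But $M$ is a $t$-ideal containing both $a$ and $b$, whence $D = (a,b)_t \subseteq M$, the desired contradiction. The delicate point to get right is the case analysis in the extraction of $a$ and $b$, and the interplay of height-one (from archimedean + Remark~\ref{remark1.3}) with the Schreier passage from coprime to $t$-comaximal; once these are in place the cycle closes.
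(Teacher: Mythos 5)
Your proposal is correct, and the logical cycle $(1)\Leftrightarrow(2)\Rightarrow(5)\Rightarrow(4)\Rightarrow(2)$ together with $(1)\Rightarrow(3)\Rightarrow(4)$ does cover all five statements; but your treatment of the hard implication differs genuinely from the paper's. The paper goes $(4)\Rightarrow(5)\Rightarrow(2)$: it first uses Remark~\ref{remark1.3} plus the archimedean hypothesis to show that every $P\in\Omega$ has height one (exactly your ``height-one observation''), and then, for $(5)\Rightarrow(2)$, invokes Corollary~\ref{corollary1.8}(2) to get that $D_P$ is a valuation domain for each $P\in\Omega$, concludes via Theorem~\ref{theorem4.2} that $D$ is a P$v$MD, and uses $t$-treedness to force each maximal $t$-ideal to equal some member of $\Omega$. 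You instead prove $(4)\Rightarrow(2)$ directly: from a hypothetical chain $P'\subsetneq M$ of prime $t$-ideals you extract incomparable valuation elements $a\in M\setminus P'$ and $b\in P'\setminus\sqrt{aD}$, show via Proposition~\ref{proposition1.1}(2) and the height-one observation that they are coprime, and then use the Schreier property together with the coprime-implies-$t$-comaximal step to contradict $a,b\in M$. This effectively inlines, for this special case, the same engine the paper hides inside Theorem~\ref{theorem4.2}(2)$\Rightarrow$(3); what you gain is a self-contained argument avoiding the detour through P$v$MDs and localizations, what the paper gains is brevity by reusing Theorem~\ref{theorem4.2} wholesale and by obtaining $(4)\Rightarrow(5)$ as an explicit byproduct rather than taking $(2)\Rightarrow(5)$ from the preceding remarks. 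Both hinge on the identical key fact that archimedean plus Remark~\ref{remark1.3} forces $\sqrt{aD}$ to have height one for every valuation element $a$.
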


\begin{proof}
Set $\Omega=\{\sqrt{xD}\mid x\in D\setminus\{0\},\sqrt{xD}\in {\rm Spec}(D)\}$. By Corollary~\ref{corollary1.14}(3), we have that $\Omega=\bigcup_{a\in D\setminus\{0\}}\mathcal{P}(aD)=\{\sqrt{xD}\mid x\in D$ is a valuation element$\}$.

(1) $\Leftrightarrow$ (2) This follows from Corollary~\ref{corollary1.9}.

(1) $\Rightarrow$ (3) Note that $D$ is an intersection of one-dimensional valuation overrings of $D$, and hence $D$ is an intersection of completely integrally closed overrings of $D$. Therefore, $D$ is completely integrally closed.

(3) $\Rightarrow$ (4) This is clear.

(4) $\Rightarrow$ (5) Let $P\in\Omega$. Then $P=\sqrt{pD}$ for some valuation element $p\in D$. It remains to show that $P$ is of height one. Let $Q$ be a prime ideal of $D$ such that $Q\subsetneq P$. We infer by Remark~\ref{remark1.3} that $Q\subseteq\bigcap_{n\in\mathbb{N}} p^nD=(0)$, and thus $Q=(0)$.

(5) $\Rightarrow$ (2) Note that $\Omega$ is the set of height-one prime ideals of $D$. It remains to show that each maximal $t$-ideal of $D$ is an element of $\Omega$. It follows by Corollary~\ref{corollary1.8}(2) that $D_P$ is a valuation domain for each $P\in\Omega$. Thus, $D$ is a P$v$MD by Theorem~\ref{theorem4.2}. Let $M\in t$-${\rm Max}(D)$. Then $M=\bigcup_{P\in\Omega,P\subseteq M} P$. Observe that $D$ is $t$-treed, and hence $M\in\Omega$.
\end{proof}

Note that if $V$ is a two-dimensional valuation domain (e.g., $V={\rm Int}(\mathbb{Z})_M$, where ${\rm Int}(\mathbb{Z})=\{f\in\mathbb{Q}[X]\mid f(\mathbb{Z})\subseteq\mathbb{Z}\}$ is the ring of integer-valued polynomials and $M$ is a height-two prime ideal of ${\rm Int}(\mathbb{Z})$), then $V$ and $V[X]$ are both examples of non-archimedean VFDs.

Let $D$ be an integral domain. We say that a nonzero element $a\in D$ has {\it prime radical} if $\sqrt{aD}$ is a prime ideal of $D$. Next we study VFDs in which each minimal prime ideal of a nonzero $t$-finite $t$-ideal is minimal over a $t$-invertible $t$-ideal (i.e., for each nonzero $t$-finite $t$-ideal $I$ of $D$ and every $P\in\mathcal{P}(I)$ there is some $t$-invertible $t$-ideal $J$ of $D$ such that $P\in\mathcal{P}(J)$). In other words, we study VFDs $D$ for which
\begin{eqnarray*}
&\bigcup&\{\mathcal{P}(I)\mid I\textnormal{ is a nonzero $t$-finite $t$-ideal of } D\}\\
=&\bigcup&\{\mathcal{P}(I)\mid I\textnormal{ is a $t$-invertible $t$-ideal of } D\}.
\end{eqnarray*}
Suppose that $D$ satisfies one of the following conditions.
\begin{enumerate}
\item $D$ is a P$v$MD.
\item $D$ is of $t$-dimension one.
\item $D$ has finitely many prime ideals.
\item For each $t$-finite $t$-ideal $I$ of $D$ there is some $a\in D$ such that $\sqrt{I}=\sqrt{aD}$.
\end{enumerate}
Then every minimal prime ideal of a nonzero $t$-finite $t$-ideal of $D$ is minimal over a $t$-invertible $t$-ideal of $D$.

\begin{lemma}\label{lemma4.7}
Let $D$ be an integral domain in which every nonzero nonunit is a finite product of elements with prime radical and let $I$ be a $t$-invertible $t$-ideal of $D$. Then $\mathcal{P}(I)$ is finite and for each $\emptyset\not=\mathcal{L}\subseteq\mathcal{P}(I)$, $\bigcap_{Q\in\mathcal{L}} Q$ is the radical of a principal ideal of $D$.
\end{lemma}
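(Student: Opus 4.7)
The plan is to reduce to the principal-ideal case by exploiting $t$-invertibility.

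\emph{Principal case.} If $I=bD$ is nonzero and proper, write $b=p_{1}\cdots p_{k}$ with each $p_{i}$ having prime radical, as guaranteed by the hypothesis. A prime of $D$ containing $b$ contains some $p_{i}$, hence the prime $\sqrt{p_{i}D}$; thus every minimal prime of $bD$ equals one of $\sqrt{p_{1}D},\ldots,\sqrt{p_{k}D}$, giving $|\mathcal{P}(bD)|\leq k$. For a nonempty $\mathcal{L}=\{\sqrt{p_{i_{1}}D},\ldots,\sqrt{p_{i_{r}}D}\}\subseteq\mathcal{P}(bD)$, iterated use of $\sqrt{AB}=\sqrt{A}\cap\sqrt{B}$ yields $\bigcap_{Q\in\mathcal{L}}Q=\sqrt{p_{i_{1}}\cdots p_{i_{r}}D}$, the radical of a principal ideal.

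\emph{Each minimal prime of $I$ is the radical of a principal ideal.} For $P\in\mathcal{P}(I)$ -- necessarily a prime $t$-ideal, being a minimal prime of the $t$-ideal $I$ -- the $t$-invertibility of $I$ gives some $u\in I$ with $I_{P}=uD_{P}$, and $PD_{P}$ is the unique minimal prime of $uD_{P}$ in $D_{P}$. Factoring $u=q_{1}\cdots q_{l}$ by hypothesis, in $D_{P}$ we have $PD_{P}=\sqrt{uD_{P}}=\bigcap_{q_{i}\in P}\sqrt{q_{i}D_{P}}$. Since $PD_{P}$ is maximal in $D_{P}$ and each $\sqrt{q_{i}D_{P}}$ is a prime of $D_{P}$ containing it, $\sqrt{q_{i}D_{P}}=PD_{P}$, hence $\sqrt{q_{i}D}=P$ in $D$ for each PR-factor $q_{i}$ of $u$ lying in $P$. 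Fix such a $q_{P}$; since $q_{P}\in P=\sqrt{I}$, some power $q_{P}^{N}$ lies in $I$.

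\emph{Finiteness.} Writing $I=(a_{1},\ldots,a_{n})_{v}$ by $t$-finiteness and setting $b=a_{1}\cdots a_{n}\in I$, the goal is $\mathcal{P}(I)\subseteq\mathcal{P}(bD)$, reducing finiteness to the principal case. For $P\in\mathcal{P}(I)$, suppose toward a contradiction a prime $Q\subsetneq P$ contains $b$. Localizing at $P$ and writing $a_{i}=uv_{i}$ in $D_{P}$, $u\notin QD_{P}$ (else $PD_{P}$ is not minimal over $uD_{P}$), so $b=u^{n}\prod v_{i}\in QD_{P}$ combined with the primality of $Q$ forces some $v_{i_{0}}\in QD_{P}$ and hence $a_{i_{0}}\in Q$. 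The remaining delicate step is to propagate this to all $a_{j}\in Q$ (using $t$-invertibility and the PR-structure inherited by $D_{P}$), giving $(a_{1},\ldots,a_{n})\subseteq Q$, whence $I=(a_{1},\ldots,a_{n})_{v}\subseteq Q_{v}$ -- a proper $v$-ideal strictly contained in $P$ -- contradicting the minimality of $P$ over $I$.

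\emph{Intersection and obstacle.} Once finiteness is established and each $P\in\mathcal{P}(I)$ is written as $\sqrt{q_{P}D}$ with $q_{P}$ a PR-element of $D$, for a nonempty $\mathcal{L}\subseteq\mathcal{P}(I)$ one obtains $\bigcap_{Q\in\mathcal{L}}Q=\sqrt{\bigl(\prod_{Q\in\mathcal{L}}q_{Q}\bigr)D}$, the radical of a principal ideal. The main obstacle is the propagation step in the finiteness argument: extending $a_{i_{0}}\in Q$ to all $a_{j}\in Q$ is subtle and likely requires an inductive reduction on the number of generators -- using the identity $aD=(IK)_{v}$ with $K=(aI^{-1})_{v}$ in $T(D)$ to pass to a $t$-invertible $t$-ideal with fewer generators -- combined with the PR-hypothesis.
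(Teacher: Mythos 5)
Your reduction to principal generators is sound where it is carried out: the principal case, the proof that each $P\in\mathcal{P}(I)$ equals $\sqrt{qD}$ for a prime-radical factor $q$ of a local generator $u\in I$ of $I_P$ (localize at the prime $t$-ideal $P$, note $\sqrt{I_P}=PD_P$ is the unique maximal ideal, and force $\sqrt{qD_P}=PD_P$, hence $\sqrt{qD}=P$), and the final formula $\bigcap_{Q\in\mathcal{L}}Q=\sqrt{\bigl(\prod_{Q\in\mathcal{L}}q_Q\bigr)D}$ all match the paper's argument in substance.

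The genuine gap is the finiteness of $\mathcal{P}(I)$, which you flag but do not close, and your proposed route to it is the wrong one. You want $\mathcal{P}(I)\subseteq\mathcal{P}(bD)$ with $b=a_1\cdots a_n$, which requires that no prime $Q\subsetneq P$ contain $b$; your localization argument only places a single generator $a_{i_0}$ in $Q$, and in a general domain satisfying the hypothesis there is no mechanism to push the remaining generators into $Q$: for $j\neq i_0$ one only knows that some prime-radical factor of $a_j$ lies in $P$, not in $Q$. (Even granting $(a_1,\dots,a_n)\subseteq Q$, you would still need $Q$ to be a $t$-ideal to conclude $I=(a_1,\dots,a_n)_v\subseteq Q$ rather than merely $I\subseteq Q_v$, which may be all of $D$.) The paper does not attempt this comparison at all: once every member of $\mathcal{P}(I)$ is known to be the radical of a principal ideal, it deduces that $\mathcal{P}(I)$ is finite by invoking \cite[Lemma 2.5]{r12}. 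So you must either quote such a result or supply an independent proof of finiteness; as written, the proposal does not prove the lemma.
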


\begin{proof}
First we show that every prime ideal in $\mathcal{P}(I)$ is the radical of a principal ideal. Let $P\in\mathcal{P}(I)$. Then $P$ is a prime $t$-ideal, and hence $I_P$ is a principal ideal, i.e., $I_P=aD_P$ for some $a\in P$. There is some $b\in P$ such that $a\in bD$ and $\sqrt{bD}\in {\rm Spec}(D)$. We have that $P_P=\sqrt{I_P}=\sqrt{aD_P}\subseteq\sqrt{bD_P}\subseteq P_P$. Consequently, $P_P=\sqrt{bD_P}=(\sqrt{bD})_P$, and hence $P=\sqrt{bD}$.

Thus, by \cite[Lemma 2.5]{r12}, $\mathcal{P}(I)$ is finite. Let $\emptyset\not=\mathcal{L}\subseteq\mathcal{P}(I)$. If $Q\in\mathcal{L}$, then $Q=\sqrt{a_QD}$ for some $a_Q\in D$. This implies that $\bigcap_{Q\in\mathcal{L}} Q=\bigcap_{Q\in\mathcal{L}}\sqrt{a_QD}=\sqrt{(\prod_{Q\in\mathcal{L}} a_Q)D}$.
\end{proof}

\begin{remark}\label{remark4.8}
{\em Let $D$ be a VFD in which each minimal prime ideal of a nonzero $t$-finite $t$-ideal is minimal over a $t$-invertible $t$-ideal and let $I$ be a nonzero $t$-finite $t$-ideal of $D$. Then $\mathcal{P}(I)$ is finite and for each $\emptyset\not=\mathcal{L}\subseteq\mathcal{P}(I)$, $\bigcap_{Q\in\mathcal{L}} Q$ is the radical of a principal ideal of $D$.}
\end{remark}

\begin{proof}
First we show that every minimal prime ideal of $I$ is the radical of a principal ideal of $D$. Let $P\in\mathcal{P}(I)$. There is some $t$-invertible $t$-ideal $J$ of $D$ such that $P\in\mathcal{P}(J)$. Consequently, $P$ is the radical of a principal ideal of $D$ by Lemma~\ref{lemma4.7}. Thus, by the proof of Lemma~\ref{lemma4.7}, $\mathcal{P}(I)$ is finite and $\bigcap_{Q\in\mathcal{L}} Q$ is the radical of a principal ideal of $D$ for all $\emptyset\not=\mathcal{L}\subseteq\mathcal{P}(I)$.
\end{proof}

\begin{proposition}\label{proposition4.9}
Let $D$ be a VFD in which each minimal prime ideal of a nonzero $t$-finite $t$-ideal is minimal over a $t$-invertible $t$-ideal. Then every $t$-finite $t$-ideal $I$ of $D$ with $\sqrt{I}\in {\rm Spec}(D)$ is principal.
\end{proposition}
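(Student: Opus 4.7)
The plan is to exhibit a greatest common divisor of a generating set for $I$ and then show this gcd generates $I$. Write $I = J_v$ with $J = (x_1, \dots, x_n)$ finitely generated and integral; if $I = D$ then $I$ is trivially principal, so I assume $I$ is proper, making $\sqrt{I}$ a proper prime (the unique minimal prime of $I$). By the hypothesis combined with $\mathrm{Cl}_t(D) = \{0\}$ from Corollary~\ref{corollary2.3}(1), and Remark~\ref{remark4.8} applied with $\mathcal{L} = \mathcal{P}(I) = \{\sqrt{I}\}$, I obtain $\sqrt{I} = \sqrt{cD}$ for some $c \in D$, and $c$ is a valuation element by Corollary~\ref{corollary1.11}. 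Fix a valuation overring $V$ of $D$ with $cV \cap D = cD$, arranging that the center of $V$ on $D$ equals $P := \sqrt{cD}$ and the value group of $V$ is archimedean (so $\bigcap_n c^n V = (0)$). After replacing $c$ by a power $c^k \in I$, which is still a valuation element witnessed by $V$ via Proposition~\ref{proposition1.1}(1), I may also assume $c \in I$.

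For each $i$, the element $x_i$ lies in $I \subseteq P$ and is a nonzero nonunit, so Proposition~\ref{proposition1.13} gives $x_i = \prod_{Q \in \mathcal{P}(x_i D)} y_{i,Q}$ as an incomparable product of valuation elements with $\sqrt{y_{i,Q} D} = Q$. Primality of $P$ forces some factor $y_{i,Q_0} \in P$, hence $Q_0 \subseteq P$; if $Q_0 \subsetneq P$, Remark~\ref{remark1.3} would put $y_{i,Q_0} \in \bigcap_n c^n D \subseteq \bigcap_n c^n V = (0)$, contradicting that $y_{i,Q_0}$ is a nonzero valuation element. So $Q_0 = P$, and I denote this factor by $y_{i,P}$. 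The remaining factors $y_{i,Q}$ for $Q \neq P$ have radicals incomparable with $P$, so $y_{i,Q} \notin P$, and since $P$ is the center of $V$ on $D$ this makes $y_{i,Q} \in V^\times$. Writing $x_i = y_{i,P} z_i$ with $z_i$ the product of these units, we obtain $x_i V = y_{i,P} V$ in $V$.

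Let $j$ be an index for which $v_V(x_j)$ is minimum; then $y_{j,P} V \supseteq y_{i,P} V$, i.e., $y_{j,P}$ divides $y_{i,P}$ in $V$ for every $i$. Since $\sqrt{y_{j,P} D} = P = \sqrt{cD}$, Proposition~\ref{proposition1.1}(3) gives $y_{j,P} V \cap D = y_{j,P} D$, so $y_{j,P}$ divides $y_{i,P}$, and therefore $x_i$, in $D$ for every $i$. Conversely, if $d \in D$ divides every $x_i$ in $D$, then $J \subseteq dD$, hence $I = J_v \subseteq dD$, hence $c \in I \subseteq dD$, so $d \mid_D c$, and by Proposition~\ref{proposition1.1}(2) $d$ is a valuation element witnessed by $V$. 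Then $v_V(d) \leq v_V(x_j) = v_V(y_{j,P})$, so $y_{j,P} \in dV \cap D = dD$, giving $d \mid_D y_{j,P}$. Thus $y_{j,P}$ is a greatest common divisor of $x_1, \dots, x_n$ in $D$, from which $I = (x_1, \dots, x_n)_v = y_{j,P} D$ is principal.

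The main obstacle I expect is the opening choice of $V$: one must justify that a valuation overring $V$ can be chosen so as to simultaneously satisfy $cV \cap D = cD$, have center $P$ on $D$, and have archimedean value group (equivalently, $\bigcap_n c^n V = (0)$). Once this choice is in place, the rest of the argument is a direct combination of Proposition~\ref{proposition1.13}, Remark~\ref{remark1.3}, and Proposition~\ref{proposition1.1}, together with the standard fact that in any integral domain the existence of a greatest common divisor of $x_1, \dots, x_n$ implies $(x_1, \dots, x_n)_v$ is the principal ideal generated by that gcd.
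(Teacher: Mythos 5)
Your argument has a genuine gap at its foundation. You need every generator $x_i$ of $I$ to carry, in its factorization into incomparable valuation elements, a factor $y_{i,P}$ with $\sqrt{y_{i,P}D}=P=\sqrt{I}$, and you derive this from a valuation overring $V$ with $cV\cap D=cD$ and $\bigcap_{n}c^nV=(0)$. No such $V$ exists in general: since $c^nV\cap D=c^nD$ for all $n$ (Proposition~\ref{proposition1.1}(1)), any witness $V$ satisfies $\bigl(\bigcap_n c^nV\bigr)\cap D=\bigcap_n c^nD$, and by Corollary~\ref{corollary1.2}(3) the latter is a prime ideal that can be nonzero --- the paper itself points out (after Proposition~\ref{proposition4.6}) that two-dimensional valuation domains are non-archimedean VFDs. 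Consequently your conclusion ``$Q_0=P$'' fails: a generator can have \emph{all} of its minimal primes strictly contained in $P$, and then it contributes no factor with radical $P$ at all. This is precisely the configuration the paper isolates as Case 1 ($\Sigma=\emptyset$) of its proof, where it argues differently: each generator $e$ has a factor $e_{P'}$ with $\sqrt{e_{P'}D}\subsetneq\sqrt{aD}$, whence $e\in e_{P'}D\subseteq aD$ by Corollary~\ref{corollary1.2}(1) and $I=aD$ outright. In the remaining case the paper extracts a single common valuation factor $c=f_Q$ dividing \emph{every} generator (including those without a $Q$-factor, again via Corollary~\ref{corollary1.2}(1)), and closes by induction on $\sum_{e\in E}|\mathcal{P}(eD)|$. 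Your ``choose the generator of minimal $V$-value and take its $P$-part as the gcd'' step only makes sense once one is in the paper's Case 2 with all generators in $\Sigma$, so the case analysis cannot be avoided.

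Two secondary points. First, even granting the existence of some witness $V$, arranging its center on $D$ to be exactly $P$ is not free: localizing $V$ at $\sqrt{cV}$ can destroy the equality $cV\cap D=cD$ (in a rank-two value group one easily gets $cV_{\sqrt{cV}}\cap V\supsetneq cV$). Second, your closing ``standard fact'' that in \emph{any} integral domain a gcd $d$ of $x_1,\dots,x_n$ forces $(x_1,\dots,x_n)_v=dD$ is false (in $\mathbb{Z}[\sqrt{-5}]$ the elements $2$ and $1+\sqrt{-5}$ have gcd $1$ but $(2,1+\sqrt{-5})_v$ is a nonprincipal maximal ideal). It does hold in pre-Schreier domains, so this part is repairable by invoking Proposition~\ref{proposition2.1} together with \cite[Proposition 3.3]{az07}, but it must be cited as such rather than as a general fact.
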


\begin{proof}
It is sufficient to show by induction that for every $m\in\mathbb{N}$ and every finite $E\subseteq D\setminus\{0\}$ such that $\sum_{e\in E} |\mathcal{P}(eD)|=m$ and $\sqrt{(E)_t}\in {\rm Spec}(D)$, it follows that $(E)_t$ is principal.

Let $m\in\mathbb{N}$ and $E\subseteq D\setminus\{0\}$ be such that $E$ is finite, $\sum_{e\in E} |\mathcal{P}(eD)|=m$ and $\sqrt{(E)_t}\in {\rm Spec}(D)$. Set $I=(E)_t$, $Q=\sqrt{I}$ and $\Sigma=\{e\in E\mid Q\in\mathcal{P}(eD)\}$. By Remark~\ref{remark4.8}, there is some $a\in D$ such that $Q=\sqrt{aD}$. Without restriction we can assume that $a\in I$. It follows from Proposition~\ref{proposition1.13} that for each $e\in E$, $e=\prod_{P\in\mathcal{P}(eD)} e_P$, where $e_A$ is a valuation element of $D$ with $\sqrt{e_AD}=A$ for each $A\in\mathcal{P}(eD)$.

\medskip
{\noindent}{\textsc{Case 1}}: $\Sigma=\emptyset$. Let $e\in E$. Since $e\in Q$, there is some $P\in\mathcal{P}(eD)$ such that $P\subsetneq Q$. This implies that $\sqrt{e_PD}\subsetneq\sqrt{aD}$, and hence $e\in e_PD\subseteq aD$ by Corollary~\ref{corollary1.2}(1). Consequently, $E\subseteq aD$, and thus $I=aD$.

\medskip
{\noindent}{\textsc{Case 2}}: $\Sigma\not=\emptyset$. Observe that if $g,h\in\Sigma$, then $\sqrt{g_QD}=\sqrt{h_QD}=Q$, and thus $g_QD$ and $h_QD$ are comparable by Corollary~\ref{corollary1.2}(1). Since $\Sigma$ is finite and nonempty, there is some $f\in\Sigma$ such that $e_QD\subseteq f_QD$ for all $e\in\Sigma$. Set $c=f_Q$.

Next we show that $e\in cD$ and $|\mathcal{P}(ec^{-1}D)|\leq |\mathcal{P}(eD)|$ for each $e\in E$. Let $e\in E$. Then $P\subseteq Q$ for some $P\in\mathcal{P}(eD)$. If $e\in\Sigma$, then $P=Q$ and $e\in e_PD\subseteq cD$. If $e\not\in\Sigma$, then $\sqrt{e_PD}\subsetneq\sqrt{cD}$, and thus $e\in e_PD\subseteq cD$ by Corollary~\ref{corollary1.2}(1). In any case we have that $e_P\in cD$. Set $d=e_Pc^{-1}$. Note that $d\mid_D e_P$ and $e_P$ is a valuation element of $D$. It follows from Proposition~\ref{proposition1.1}(2) that $d$ is either a unit or a valuation element of $D$. Note that $e=e_Pb$, where $b\in D$ is a product of $|\mathcal{P}(eD)|-1$ valuation elements of $D$. Consequently, $ec^{-1}=db$ is a unit or a product of at most $|\mathcal{P}(eD)|$ valuation elements of $D$. It follows from Lemma~\ref{lemma1.12} that $|\mathcal{P}(ec^{-1}D)|\leq |\mathcal{P}(eD)|$.

We infer that $I\subseteq cD$. Set $F=\{ec^{-1}\mid e\in E\}$ and $J=(F)_t$. Clearly, $F\subseteq D\setminus\{0\}$ is finite, $J$ is a $t$-finite $t$-ideal of $D$ and $I=cJ$. Note that $\mathcal{P}(fc^{-1}D)=\mathcal{P}(fD)\setminus\{Q\}$, and thus $|\mathcal{P}(fc^{-1}D)|<|\mathcal{P}(fD)|$. Therefore, $\sum_{g\in F} |\mathcal{P}(gD)|=\sum_{e\in E} |\mathcal{P}(ec^{-1}D)|<\sum_{e\in E} |\mathcal{P}(eD)|=m$. It follows by Remark~\ref{remark4.8} that $\sqrt{J}=\sqrt{bD}$ for some $b\in D$. Without restriction let $J\not=D$. Since $\sqrt{cD}\subseteq\sqrt{bD}$, it follows by Proposition~\ref{proposition1.1}(3) that $b$ is a valuation element of $D$, and hence $\sqrt{J}\in {\rm Spec}(D)$. It follows by the induction hypothesis that $J$ is principal. Consequently, $I$ is principal.
\end{proof}

\begin{proposition}\label{proposition4.10}
Let $D$ be a VFD in which each minimal prime ideal of a nonzero $t$-finite $t$-ideal is minimal over a $t$-invertible $t$-ideal. Then $D$ is a GCD-domain.
\end{proposition}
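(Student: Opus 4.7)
My plan is to prove the stronger statement that every $t$-finite $t$-ideal of $D$ is principal; combined with the fact that $(a,b)_t$ is $t$-finite for every pair of nonzero $a,b\in D$, this yields a greatest common divisor for each such pair, making $D$ a GCD-domain. I proceed by strong induction on $n=|\mathcal{P}(I)|$, which is finite by Remark~\ref{remark4.8}. The base cases $n\le 1$ are immediate: $n=0$ forces $I=D$, and $n=1$ means $\sqrt{I}$ is prime, so Proposition~\ref{proposition4.9} applies.

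For the inductive step with $n\ge 2$, write $\mathcal{P}(I)=\{P_1,\dots,P_n\}$. By Remark~\ref{remark4.8} and Corollary~\ref{corollary1.11}, each $P_i=\sqrt{c_iD}$ for a valuation element $c_i$, and for each $i$ one obtains $e_i\in D$ satisfying $\sqrt{e_iD}=\bigcap_{j\ne i}P_j$. The pairwise incomparability of the $P_j$'s ensures $e_i\notin P_i$, so the $t$-finite $t$-ideal $J_i:=(I+e_iD)_t$ sits inside every $P_j$ with $j\ne i$ but outside $P_i$. The plan is to show that $\mathcal{P}(J_i)=\{P_j:j\ne i\}$---which will give $|\mathcal{P}(J_i)|<n$ and allow the inductive hypothesis to produce $J_i=f_iD$ for some $f_i\in D$---by using the hypothesis once more on $J_i$ (and the fact that its minimal primes are radicals of valuation elements) to exclude any spurious minimal prime strictly containing $P_i$.

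The main obstacle is assembling the $J_i$'s into a single principal generator of $I$. My approach is to show $I$ is $t$-locally principal at every maximal $t$-ideal $M$ of $D$: when $M\not\supseteq I$, $ID_M=D_M$; when $M\supseteq I$, $M$ contains some $P_k$, and the principal ideals $J_i$ for $i\ne k$ trap $I$ and pin down a local generator of $ID_M$. By the standard criterion that a $t$-finite $t$-ideal is $t$-invertible iff locally principal at every maximal $t$-ideal, $I$ is $t$-invertible; and since $\mathrm{Cl}_t(D)=\{0\}$ by Corollary~\ref{corollary2.3}(1), $I$ is principal. The delicate step is verifying local principality when $M$ contains several $P_k$'s simultaneously; here one leans on the Schreier property (Proposition~\ref{proposition2.1}) together with the hypothesis applied to the auxiliary $J_i$'s to ensure that the local generators glue consistently.
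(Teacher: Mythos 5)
Your overall strategy---induct on $|\mathcal{P}(I)|$, which is finite by Remark~\ref{remark4.8}, and feed the base case to Proposition~\ref{proposition4.9}---is the right one, and your auxiliary ideals $J_i=(I+e_iD)_t$ do satisfy $\mathcal{P}(J_i)=\{P_j\mid j\neq i\}$ (indeed, any minimal prime $Q$ of $J_i$ contains $\sqrt{e_iD}=\bigcap_{j\neq i}P_j$, hence contains some $P_j$ with $j\neq i$, hence equals that $P_j$ by minimality; no appeal to the hypothesis is needed to exclude ``spurious'' primes). The gap is in the final assembly. Knowing that each $J_i$ is principal, say $J_i=f_iD$, does not by itself ``pin down a local generator of $ID_M$'': when $M$ contains several of the $P_k$, the containments $ID_M\subseteq f_iD_M$ only bound $ID_M$ from above and produce no candidate generator, and ``one leans on the Schreier property \dots\ to ensure that the local generators glue consistently'' is a placeholder, not an argument. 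Local principality of $t$-finite $t$-ideals at maximal $t$-ideals is equivalent to $t$-invertibility, i.e.\ it is essentially the statement you are trying to prove, so this step cannot be waved through.

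The paper avoids localization entirely by a cancellation argument, and this is the idea your proposal is missing. Fix one $P\in\mathcal{P}(I)$ and, using Remark~\ref{remark4.8}, choose $a,b\in D$ with $P=\sqrt{aD}$, $\bigcap_{Q\in\mathcal{P}(I)\setminus\{P\}}Q=\sqrt{bD}$ and $ab\in I$ (replace the first choices by suitable powers, since $\sqrt{I}$ is the radical of their product). Then $J=(I+aD)_t$ has $\sqrt{J}=P$ prime, so $J$ is principal by Proposition~\ref{proposition4.9}; writing $I=JL$ with $L=J^{-1}I$, the relation $Jb\subseteq I=JL$ yields $b\in L$ by cancelling the principal ideal $J$, from which one checks $\mathcal{P}(L)=\mathcal{P}(I)\setminus\{P\}$ and applies the induction hypothesis to $L$. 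Your $J_i$'s can be salvaged by the mirror image of this: arrange $c_ie_i\in I$ where $P_i=\sqrt{c_iD}$ (again by passing to powers), write $I=f_iL_i$, deduce $c_i\in L_i$ and $L_i\subseteq P_i$, hence $\sqrt{L_i}=P_i$, and finish by applying Proposition~\ref{proposition4.9} to the cofactor $L_i$. Either way, the decisive step is the factorization $I=JL$ with $J$ principal together with the identification of $\mathcal{P}(L)$, not local principality.
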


\begin{proof}
By Remark~\ref{remark4.8} it is sufficient to show by induction that for each $n\in\mathbb{N}$ and every nonzero $t$-finite $t$-ideal $I$ of $D$ with $|\mathcal{P}(I)|=n$, it follows that $I$ is principal.

Let $n\in\mathbb{N}$ and let $I$ be a nonzero $t$-finite $t$-ideal of $D$ such that $|\mathcal{P}(I)|=n$. Without restriction let $n\geq 2$ and let $P\in\mathcal{P}(I)$. By Remark~\ref{remark4.8} there are some $c,d\in D$ such that $P=\sqrt{cD}$ and $\bigcap_{Q\in\mathcal{P}(I)\setminus\{P\}} Q=\sqrt{dD}$. Observe that $\sqrt{I}=\sqrt{cdD}$, and hence $c^kd^k\in I$ for some $k\in\mathbb{N}$. Set $a=c^k$ and $b=d^k$. Then $P=\sqrt{aD}$, $\bigcap_{Q\in\mathcal{P}(I)\setminus\{P\}} Q=\sqrt{bD}$ and $ab\in I$. Set $J=(I+aD)_t$. Then $J$ is a $t$-finite $t$-ideal of $D$ such that $\sqrt{J}=P$, and hence $J$ is principal by Proposition~\ref{proposition4.9}. Consequently, there is some $t$-finite $t$-ideal $L$ of $D$ such that $I=JL$.

Next we show that $\mathcal{P}(L)=\mathcal{P}(I)\setminus\{P\}$. First let $A\in\mathcal{P}(I)\setminus\{P\}$. Then $JL=I\subseteq A$. If $J\subseteq A$, then $P\subseteq A$, and hence $P=A$, a contradiction. Therefore, $L\subseteq A$, and since $I\subseteq L$, we infer that $A\in\mathcal{P}(L)$. Now let $B\in\mathcal{P}(L)$. Since $ab\in I$, we have that $Jb\subseteq I$, and hence $b\in L\subseteq B$. Consequently, $\sqrt{bD}\subseteq B$. This implies that $C\subseteq B$ for some $C\in\mathcal{P}(I)\setminus\{P\}$. We have that $C\in\mathcal{P}(L)$ (as shown before), and thus $B=C\in\mathcal{P}(I)\setminus\{P\}$. By the induction hypothesis, $L$ is principal. Thus, $I=JL$ is principal.
\end{proof}

We do not know whether every VFD is a weakly Matlis GCD-domain, but we do know this is affirmative under certain additional assumptions. In what follows, we summarize a variety of conditions that force a VFD to be a weakly Matlis GCD-domain.

\begin{theorem}\label{theorem4.11}
The following statements are equivalent for a VFD $D$.
\begin{enumerate}
\item $D$ is a weakly Matlis GCD-domain.
\item $D$ is a UVFD.
\item $D$ is a P$v$MD.
\item $D$ is a $t$-treed domain.
\item Each minimal prime ideal of each nonzero $t$-finite $t$-ideal of $D$ is minimal over a $t$-invertible $t$-ideal.
\item $D$ is a $t$-finite conductor domain.
\item $D$ is a UMT-domain.
\end{enumerate}
\end{theorem}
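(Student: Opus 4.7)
My plan is to designate condition (3) --- that $D$ is a P$v$MD --- as the central hub and establish its equivalence with each of the other six statements, so that all seven conditions collapse to one. The block $(1)\Leftrightarrow(2)\Leftrightarrow(3)$ requires no new work: $(1)\Leftrightarrow(2)$ is exactly Corollary~\ref{corollary4.5}, and $(2)\Leftrightarrow(3)$ is immediate from Theorem~\ref{theorem4.2} since UVFDs are VFDs and $D$ is a VFD by hypothesis.

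For $(3)\Leftrightarrow(4)$, the forward direction uses the standard fact (recorded in the introduction) that a P$v$MD has valuation localizations at each maximal $t$-ideal, so prime $t$-ideals descend into chains and $t$-$\mathrm{Spec}(D)$ is treed. The reverse direction is the content of Theorem~\ref{theorem3.4}: a $t$-treed VFD is a weakly Matlis GCD-domain, hence in particular a P$v$MD. For $(3)\Leftrightarrow(5)$, if $D$ is a P$v$MD then every nonzero $t$-finite $t$-ideal is itself $t$-invertible, so condition~(5) is automatic; the converse is precisely Proposition~\ref{proposition4.10}, which, from (5) plus the VFD hypothesis, yields a GCD-domain and hence a P$v$MD. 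For $(3)\Leftrightarrow(6)$, the already established chain $(3)\Leftrightarrow(1)$ lets me conclude that $D$ is a GCD-domain, in which $aD\cap bD$ is principal and therefore $t$-finite, giving~(6); the converse is exactly Corollary~\ref{corollary2.3}(3). Finally, $(3)\Leftrightarrow(7)$ is the Houston--Zafrullah characterization quoted in Section~2: an integrally closed UMT-domain is precisely a P$v$MD, and every VFD is integrally closed by Corollary~\ref{corollary1.5}.

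The only non-routine implication is $(5)\Rightarrow(3)$, and that is the main obstacle; everything else is a one-line appeal to a previously proved result. Fortunately, the heavy lifting for this step has already been absorbed into Lemma~\ref{lemma4.7}, Remark~\ref{remark4.8}, and Propositions~\ref{proposition4.9} and~\ref{proposition4.10}, so the actual proof of the theorem reduces to assembling the citations in the correct order and verifying that no implication accidentally needs a hypothesis stronger than ``$D$ is a VFD''.
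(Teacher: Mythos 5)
Your proposal is correct and follows essentially the same route as the paper: both use (3) as the hub, dispatch $(1)\Leftrightarrow(2)\Leftrightarrow(3)$ via Corollary~\ref{corollary4.5} and Theorem~\ref{theorem4.2}, note that $(3)$ implies $(4)$--$(7)$ easily, and return to $(3)$ from each of $(4)$--$(7)$ via Theorem~\ref{theorem3.4}, Proposition~\ref{proposition4.10}, Corollary~\ref{corollary2.3} and Corollary~\ref{corollary1.5} respectively. No gaps.
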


\begin{proof}
(1) $\Leftrightarrow$ (2) This is an immediate consequence of Corollary~\ref{corollary4.5}.

(2) $\Leftrightarrow$ (3) This follows from Theorem~\ref{theorem4.2}.

(3) $\Rightarrow$ (4), (5), (6), (7) This is clear.

(4) $\Rightarrow$ (3) This follows from Theorem~\ref{theorem3.4}.

(5) $\Rightarrow$ (3) This is an immediate consequence of Proposition~\ref{proposition4.10}.

(6) $\Rightarrow$ (3) This follows from Corollary~\ref{corollary2.3}.

(7) $\Rightarrow$ (3) Note that every integrally closed UMT-domain is a P$v$MD, and thus the statement follows by Corollary~\ref{corollary1.5}.
\end{proof}

\begin{proposition}\label{proposition4.12}
Let $D$ be a VFD and let $\Omega=\{\sqrt{xD}\mid x\in D\setminus\{0\},\sqrt{xD}\in {\rm Spec}(D)\}$. Let one of the following conditions be satisfied.
\begin{enumerate}
\item For all $P\in\Omega$, each nonzero prime $t$-ideal of $D$ contained in $P$ is in $\Omega$.
\item For all $P\in\Omega$ there is a unique height-one prime ideal $Q$ of $D$ with $Q\subseteq P$.
\item $t\textnormal{-}\dim(D)\leq 2$.
\end{enumerate}
Then $D$ is a weakly Matlis GCD-domain.
\end{proposition}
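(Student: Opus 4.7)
The plan is, in each of the three cases, to show that $D$ is $t$-treed; then Theorem~\ref{theorem4.11} yields that $D$ is a weakly Matlis GCD-domain. I would argue by contradiction, assuming $D$ is not $t$-treed so that some prime $t$-ideal $M$ contains two incomparable prime $t$-ideals $P_1,P_2$.

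First, common to all three cases, I would reduce to elements of $\Omega$. Pick $x_i\in P_i\setminus P_{3-i}$ and factor $x_i$ as a product of valuation elements; primality of $P_i$ produces a valuation element factor $r_i\in P_i\setminus P_{3-i}$, and setting $R_i:=\sqrt{r_iD}\in\Omega$ yields $R_i\subseteq P_i\subseteq M$ with $R_1,R_2$ still incomparable. Since $r_1,r_2$ both lie in the proper $t$-ideal $M$, they are not $t$-comaximal, so Proposition~\ref{proposition2.1} together with \cite[Proposition 3.3]{az07} supplies a nonunit $d\in D$ dividing both $r_1$ and $r_2$. I would factor $d$ into valuation elements and fix any factor $v$; then $R_1,R_2\subseteq\sqrt{dD}\subseteq\sqrt{vD}\in\Omega$, with both inclusions strict by incomparability. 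Remark~\ref{remark1.3} applied to $v$ and $R_i\subsetneq\sqrt{vD}$ gives $R_1\cup R_2\subseteq R^\sharp:=\bigcap_{n\in\mathbb{N}}v^nD$. By Corollary~\ref{corollary1.2}(3), $R^\sharp$ is a prime ideal, and since each $v^nD$ is a principal (hence $t$-)ideal, $R^\sharp$ is a $t$-ideal. Moreover $R^\sharp\subsetneq\sqrt{vD}$ and $R^\sharp\supsetneq R_1$ strictly (because $R_2\subseteq R^\sharp$ while $R_2\not\subseteq R_1$).

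I would now close each case. In case (3), the chain $(0)\subsetneq R_1\subsetneq R^\sharp\subsetneq\sqrt{vD}$ of four distinct prime $t$-ideals forces $t\text{-}\dim(D)\geq 3$, contradicting (3). In case (1), assumption (1) gives $R^\sharp\in\Omega$; the family of prime $t$-ideals sandwiched between $R_1\cup R_2$ and $\sqrt{vD}$ is nonempty and closed under intersections of descending chains, so Zorn's lemma produces a minimal such element $R^*$, again in $\Omega$ by (1). Writing $R^*=\sqrt{uD}$ for a valuation element $u$ and reapplying Remark~\ref{remark1.3} to the strict inclusions $R_1,R_2\subsetneq R^*$ yields a prime $t$-ideal $\bigcap_n u^nD\subsetneq R^*$ still containing $R_1\cup R_2$, contradicting the minimality of $R^*$. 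In case (2), $\sqrt{vD}\in\Omega$ has a unique height-one prime $Q$; the unique height-one primes $Q_i\subseteq R_i\subseteq\sqrt{vD}$ supplied by (2) for $R_i\in\Omega$ are themselves height-one primes inside $\sqrt{vD}$, so uniqueness forces $Q_1=Q=Q_2$. Thus $Q\subseteq R_1\cap R_2$, and since $Q$ is height-one any valuation element $q\in Q$ satisfies $\sqrt{qD}=Q\in\Omega$, whence Corollary~\ref{corollary1.14}(2) applied to $q$ forces $R_1,R_2$ to be comparable, the desired contradiction.

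The central obstacle is the construction in the second paragraph: the Schreier property and Remark~\ref{remark1.3} must be combined to drive both $R_1$ and $R_2$ into the infinite intersection $\bigcap_n v^nD$. In case (3) this alone suffices via a dimension bound, but case (1) must iterate via Zorn, and the subtle point is verifying that both $R^\sharp$ and the Zorn-minimal $R^*$ are $t$-ideals, so that assumption (1) applies to keep the construction inside $\Omega$.
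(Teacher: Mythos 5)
Your proof is correct, but it takes a genuinely different route from the paper's. You funnel all three hypotheses through $t$-treedness (condition (4) of Theorem~\ref{theorem4.11}), whereas the paper never argues about treedness: for (1) it localizes at $P\in\Omega$, shows every nonzero prime $t$-ideal of $D_P$ is the radical of a principal ideal, and applies condition (5) of Theorem~\ref{theorem4.11} to make $D_P$ a valuation domain; for (2) it verifies condition (5) of Theorem~\ref{theorem4.2} directly; and for (3) it reduces to (1). Your common construction is sound: the Schreier property (Proposition~\ref{proposition2.1} together with \cite[Proposition 3.3]{az07}) legitimately produces the common nonunit divisor $d$, any valuation factor $v$ of $d$ gives $R_1\cup R_2\subseteq\sqrt{vD}\in\Omega$ with both inclusions strict by incomparability, and $R^\sharp=\bigcap_{n\in\mathbb{N}}v^nD$ is indeed a nonzero prime $t$-ideal (Corollary~\ref{corollary1.2}(3), plus the fact that a nonzero intersection of principal ideals is a $t$-ideal) sitting strictly between $R_1$ and $\sqrt{vD}$. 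This buys you a very clean case (3): the strict chain $R_1\subsetneq R^\sharp\subsetneq\sqrt{vD}$ of three nonzero prime $t$-ideals already forces $t$-$\dim(D)\geq 3$ (by the paper's convention $(0)$ is not a prime $t$-ideal, so the correct count is three rather than your four, but the conclusion is unaffected), which is arguably more direct than the paper's reduction of (3) to (1). The price is paid in case (1), where you need a Zorn descent on the prime $t$-ideals between $R_1\cup R_2$ and $\sqrt{vD}$; this does work --- intersections of descending chains of primes containing the fixed nonzero ideal $R_1$ remain nonzero prime $t$-ideals, and hypothesis (1) keeps the minimal element $R^*$ in $\Omega$ so that Corollary~\ref{corollary1.11} and Remark~\ref{remark1.3} can be reapplied to contradict minimality --- but it is heavier than the paper's localization argument, which leans on the already-established Propositions~\ref{proposition4.9} and~\ref{proposition4.10}. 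Your case (2) is in substance the paper's own argument (uniqueness of height-one primes plus Corollary~\ref{corollary1.14}(2)), merely embedded in the treedness framework, at the cost of first having to manufacture the element $C=\sqrt{vD}$ of $\Omega$ containing both $R_1$ and $R_2$.
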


\begin{proof}
(1) By Theorems~\ref{theorem4.2} and~\ref{theorem4.11} it remains to show that $D_P$ is a valuation domain for each $P\in\Omega$. Let $P\in\Omega$. Then $D_P$ is a VFD by Corollary~\ref{corollary1.8}(1). Moreover, $P_P$ is both the unique maximal $t$-ideal of $D_P$ and the radical of a principal ideal of $D_P$. Next we show that every nonzero prime $t$-ideal of $D_P$ is the radical of a principal ideal of $D_P$. Let $Q$ be a nonzero prime $t$-ideal of $D_P$. Then $Q\cap D$ is a nonzero prime $t$-ideal of $D$ contained in $P$. Therefore, $Q\cap D$ is the radical of a principal ideal of $D$, and thus $Q=(Q\cap D)_P$ is the radical of a principal ideal of $D_P$. Consequently, $D_P$ satisfies (5) in Theorem~\ref{theorem4.11}, and thus $D_P$ is a P$v$MD again by Theorem~\ref{theorem4.11}. We infer that $D_P$ is a valuation domain (since $P_P$ is a maximal $t$-ideal of $D_P$).

(2) By Theorems~\ref{theorem4.2} and~\ref{theorem4.11} it is sufficient to show that for all $A,B,C\in\Omega$ with $A\cup B\subseteq C$, $A$ and $B$ are comparable. Let $A,B,C\in\Omega$ be such that $A\cup B\subseteq C$. There are some height-one prime ideals $P$ and $Q$ of $D$ such that $P\subseteq A$ and $Q\subseteq B$. Since $P\cup Q\subseteq C$, it follows that $P=Q$, and hence $P\subseteq A\cap B$. Therefore, $A$ and $B$ are comparable by Corollary~\ref{corollary1.14}(2).

(3) Note that $\Omega$ contains the set of height-one prime ideals of $D$, and thus $D$ is a weakly Matlis GCD-domain by (1).
\end{proof}

A weakly Matlis GCD-domain is a VFD by Corollary~\ref{corollary4.5}. Moreover, if $D$ is a $t$-treed domain (e.g., P$v$MD or $t$-dim$(D)=1$), then $D$ is a VFD if and only if $D$ is a weakly Matlis GCD-domain by Theorem~\ref{theorem3.4}. We end this paper with a question.

\begin{question}\label{question4.13}
{\em Let $D$ be a VFD. Is $D$ a weakly Matlis GCD-domain?}
\end{question}

\bigskip
\noindent
\textbf{Acknowledgements.} We want to thank the referee for many helpful suggestions and comments which improved the quality of this paper. This research was completed while the second-named author visited Incheon National University during 2019. The first-named author was supported by Basic Science Research Program through the National Research Foundation of Korea (NRF) funded by the Ministry of Education (2017R1D1A1B06029867). The second-named author was supported by the Austrian Science Fund FWF, Project Number J4023-N35.

\end{document}